\newtheorem{theorem}{Theorem}
\theoremstyle{plain}
\newtheorem{claim}[theorem]{Claim}
\newtheorem{conjecture}[theorem]{Conjecture}
\newtheorem{corollary}[theorem]{Corollary}
\newtheorem{lemma}[theorem]{Lemma}
\newtheorem{problem}[theorem]{Problem}
\newtheorem{proposition}[theorem]{Proposition}
\newtheorem{prop}[theorem]{Proposition}
\numberwithin{equation}{section}
\numberwithin{theorem}{section}
\numberwithin{case}{section}
\numberwithin{subcase}{case}
\def \e{\epsilon}
\def\F{\mathcal{F}}
\def \a{\alpha}
\def\eps{\varepsilon}
\def \e{\epsilon}
\def \r{\gamma}
\def \cP{\mathcal{P}}
\def \bfi{\mathbf{i}}
\def \bfu{\mathbf{u}}
\def \bfv{\mathbf{v}}
\def \bfw{\mathbf{w}}
\def \pack{\textbf{Pack}}
\def \hcf{\mathrm{hcf}}
\def \PM{\textbf{PM}}
\def\a{\alpha}
\def\b{\beta}
\def\d{\delta}
\def\COMMENT#1{}
\let\COMMENT=\footnote
\begin{document}

\title{The complexity of perfect matchings and packings in dense hypergraphs}
\author{Jie Han and Andrew Treglown}
\thanks{JH supported by FAPESP (2013/03447-6, 2014/18641-5, 2015/07869-8), AT  supported by EPSRC grant EP/M016641/1}

\address{School of Mathematics, University of Birmingham, Birmingham, B15 2TT, UK. Email: jhan@ime.usp.br, a.c.treglown@bham.ac.uk}

\begin{abstract}
Given two $k$-graphs $H$  and $F$, a perfect $F$-packing in $H$ 
is a collection of vertex-disjoint copies of $F$ in $H$ which together cover all the vertices in $H$. In the case when $F$ is a single edge, a perfect $F$-packing is simply  a perfect matching. For a given fixed $F$, it is often the case that the decision problem whether an $n$-vertex $k$-graph $H$ contains a perfect $F$-packing is NP-complete. Indeed, if $k \geq 3$, the corresponding problem for perfect matchings is NP-complete~\cite{karp, garey} whilst if $k=2$ the problem is NP-complete in the case when $F$ has a component consisting of at least
$3$ vertices~\cite{HeKi}.

In this paper we give a general tool which can be used to 
determine classes 
of (hyper)graphs for which the corresponding decision problem for perfect $F$-packings is polynomial time solvable. We then give three applications of this tool:
(i) Given $1\leq \ell \leq k-1$, we give a minimum $\ell$-degree condition for which it is polynomial time solvable to determine whether a $k$-graph satisfying this condition has a perfect matching; (ii) Given any graph $F$ we give a minimum degree condition for which it is polynomial time solvable to determine whether a graph satisfying this condition has a perfect $F$-packing;
(iii) We also prove a similar result for perfect $K$-packings in $k$-graphs where
$K$ is a $k$-partite $k$-graph.

For a range of values of $\ell,k$ (i) resolves a conjecture of Keevash, Knox and Mycroft~\cite{KKM13} whilst (ii) answers a question of Yuster~\cite{Yuster07} in the negative.
In many cases our results  are best possible in the sense that lowering the minimum degree condition means that the corresponding decision problem becomes NP-complete.
\end{abstract}
\date{\today}
\maketitle

\section{Introduction}
Given $k\ge 2$, a \emph{$k$-uniform hypergraph} (or \emph{$k$-graph}) consists of a vertex set $V(H)$ and an edge set $E(H)\subseteq \binom{V(H)}{k}$, where every edge is a $k$-element subset of $V(H)$. A \emph{matching} in $H$ is a collection of vertex-disjoint edges of $H$. A \emph{perfect matching} $M$ in $H$ is a matching that covers all vertices of $H$. 

The question of whether a given $k$-graph $H$ contains a perfect matching is one of the most fundamental problems in combinatorics. In the graph case $k=2$, Tutte's Theorem \cite{Tu47} gives necessary and sufficient conditions for $H$ to contain a perfect matching, and Edmonds' Algorithm \cite{Edmonds} finds such a matching in polynomial time. 
On the other hand, the decision problem whether
a $k$-graph contains a perfect matching is famously NP-complete for $k\geq 3$ (see~\cite{karp, garey}). 

An important generalisation of the notion of a perfect matching is that of a \emph{perfect packing}: Given two $k$-graphs $H$  and $F$, an \emph{$F$-packing} in $H$ 
is a collection of vertex-disjoint copies of $F$ in $H$. An
$F$-packing is called \emph{perfect} if it covers all the vertices of $H$.
Perfect $F$-packings are also referred to as \emph{$F$-factors} or \emph{perfect $F$-tilings}. 
Note that  perfect matchings  correspond to the case when $F$ is a single edge. Hell and Kirkpatrick~\cite{HeKi} showed that the decision problem
whether a graph~$G$ has a perfect
$F$-packing is NP-complete precisely when $F$ has a component consisting of at least
$3$ vertices.

In light of the aforementioned complexity results, there has been significant attention to  determine classes 
of (hyper)graphs for which the respective decision problems are polynomial time solvable. 
A key contribution of this paper is to provide a general tool (Theorem~\ref{genthm}) that can be used to obtain such results. For this result we need to introduce several concepts so we defer its statement until Section~\ref{state}.
However, roughly speaking, for any $k$-graph $F$, Theorem~\ref{genthm} yields a general class of $k$-graphs within  which we do have  a complete characterisation of those $k$-graphs  that contain a perfect $F$-packing. We then give three applications of Theorem~\ref{genthm}, which we describe below. In particular, each of our applications convey an underlying theme: In each case, the class of (hyper)graphs $H$ we consider are those that satisfy some minimum degree condition that ensures an \emph{almost}  
  perfect matching or packing $M$ (i.e. $M$ covers all but a constant number of the vertices of  $H$). Thus, in each application we show that we can detect the `last obstructions' to having a perfect matching or packing efficiently.

\subsection{Perfect matchings in hypergraphs}
Given a $k$-graph $H$ with an $\ell$-element vertex set $S$ (where $0 \leq \ell \leq k-1$) we define
$d_H (S)$ to be the number of edges containing $S$. The \emph{minimum $\ell$-degree $\delta _{\ell}
(H)$} of $H$ is the minimum of $d_H (S)$ over all $\ell$-element sets of vertices in $H$.
We refer to $\delta _{k-1}(H)$ as the \emph{minimum codegree} of $H$. The following conjecture from~\cite{HPS, KuOs-survey} gives a minimum $\ell$-degree
condition that ensures a perfect matching in a $k$-graph.

\begin{conjecture}\label{conj}
Let $\ell, k \in \mathbb N$ such that $\ell \leq k-1$.
Given any $\eps >0$, there is an $n_0 \in \mathbb N$ such that the following holds. Suppose $H$ is a $k$-graph on $n\geq n_0$ vertices where $k$ divides $n$.
If
$$\delta _\ell (H) \geq \max \left \{ \left( 1/2 +\eps \right ), \left (1-\left (1-\frac{1}{k}\right)^{k-\ell} +\eps \right ) \right\}\binom{n}{k-\ell}$$
then $H$ contains a perfect matching.
\end{conjecture}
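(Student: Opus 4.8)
The plan is to use the \emph{absorbing method}, which reduces the task of finding a perfect matching in $H$ to that of finding a matching covering all but a bounded number of vertices. Write $d:=1-(1-1/k)^{k-\ell}$, so the hypothesis reads $\delta_\ell(H)\ge(\max\{1/2,d\}+\eps)\binom{n}{k-\ell}$; note that in every case $\delta_\ell(H)\ge(1/2+\eps)\binom{n}{k-\ell}$, and that $d>1/2$ exactly when $\ell$ is small relative to $k$ (roughly $\ell<(1-\ln 2)k$). The two terms in the threshold mirror the two extremal configurations: the \emph{space barrier} $H^*$, obtained by fixing a set $S^*$ with $|S^*|=\lceil n/k\rceil-1$ and taking all $k$-sets meeting $S^*$ --- here $\delta_\ell(H^*)\approx d\binom{n}{k-\ell}$ yet no matching covers more than $k|S^*|<n$ vertices --- and a parity construction splitting $V(H)$ into two parts of appropriate sizes, which has $\delta_\ell\approx\tfrac12\binom{n}{k-\ell}$ and a fractional perfect matching but no perfect matching.

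\emph{Step 1: absorbing structure.} Using only $\delta_\ell(H)\ge(1/2+\eps)\binom{n}{k-\ell}$, I would show that every $k$-set $S\subseteq V(H)$ admits $\Omega(n^{t})$ \emph{absorbers} of some fixed size $t$: vertex sets $T$ with $|T|=t$, $k\mid t$, such that both $H[T]$ and $H[T\cup S]$ have perfect matchings. The bound $1/2$ is the right one here because it forces the relevant $(k-1)$-set neighbourhoods to pairwise intersect, which is what makes such absorbers abundant. A standard probabilistic deletion argument then produces a single matching $M_0$, covering a vertex set $A$ with $|A|\le\eps' n$, such that for \emph{every} $W\subseteq V(H)\setminus A$ with $|W|\le\eps''n$ and $k\mid|W|$, the $k$-graph $H[A\cup W]$ has a perfect matching.

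\emph{Step 2: almost perfect matching and assembly.} Put $H':=H[V(H)\setminus A]$ and $n':=|V(H')|$; then $\delta_\ell(H')\ge(\max\{1/2,d\}+\eps/2)\binom{n'}{k-\ell}$ since $\eps'\ll\eps$. I would show $H'$ has a matching $M_1$ missing at most $\beta n$ vertices, with $\beta\ll\eps''$, as follows. First show $H'$ has a \emph{fractional} perfect matching: when $d<1/2$ this is immediate from the slack in the hypothesis, and in general the minimum $\ell$-degree threshold for a fractional perfect matching is known to be that forced by $H^*$, namely $d\binom{n'}{k-\ell}+o(n^{k-\ell})$, which our hypothesis exceeds. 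Then convert the fractional perfect matching into $M_1$ by the R\"odl--Ruci\'nski--Szemer\'edi argument: weak hypergraph regularity, together with the fact that a sufficiently dense and regular $k$-graph admitting a fractional perfect matching has a matching covering almost all of its vertices. Finally, let $W:=V(H')\setminus V(M_1)$; since $k\mid n$, $k\mid|A|$ and $M_0\cup M_1$ covers a multiple of $k$ vertices, $k\mid|W|$, and $|W|\le\beta n\le\eps'' n$, so by the absorbing property of $M_0$ the $k$-graph $H[A\cup W]$ has a perfect matching $M_2$; then $M_1\cup M_2$ is a perfect matching of $H$.

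\emph{The main obstacle} is Step 1 --- the construction of the absorbing structure --- when $\ell$ is small compared with $k$. For large $\ell$ (say $\ell\ge k/2$, where $\max\{1/2,d\}=1/2$) one can extract enough codegree-type information to build absorbers and run the above scheme, which is essentially how the statement is known in that range. But a low-order minimum $\ell$-degree condition gives no control at all on codegrees --- a single $(k-1)$-set may have codegree $0$, exactly what a naive absorber construction must avoid --- and, worse, for small $\ell$ the hypothesis $\delta_\ell(H)\ge(d+\eps)\binom{n}{k-\ell}$ is only just above $\delta_\ell(H^*)$, leaving no slack and forcing a stability dichotomy: either $H$ is structurally close to $H^*$ (and one must produce a perfect matching by hand, using that $H^*$ fails only because $S^*$ is slightly undersized) or $H$ is far from $H^*$ (and then Steps 1--2 go through with room to spare). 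Making this dichotomy work uniformly, and in particular building constant-sized absorbers from nothing but an $\ell$-degree hypothesis, is precisely the point at which the conjecture remains open.
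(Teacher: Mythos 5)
You should be aware that what you are asked to prove is \emph{Conjecture}~\ref{conj}, which the paper explicitly treats as open: it is attributed to~\cite{HPS, KuOs-survey}, and the paper notes the threshold is currently known only for $0.42k \le \ell \le k-1$ plus a handful of other pairs $(k,\ell)$. The paper proves no version of this statement; the closest it comes in this degree regime is Theorem~\ref{mainthm}, which says that above roughly the space-barrier threshold one can \emph{decide} in polynomial time whether a perfect matching exists, not that one always exists. Your absorbing-method template (absorbing matching, fractional perfect matching, R\"odl--Ruci\'nski--Szemer\'edi conversion, then absorption of the leftover) is the standard strategy for the cases that \emph{are} known, and your final paragraph correctly identifies the crux: building constant-size absorbers from a bare $\ell$-degree bound when $\ell$ is small relative to $k$ is precisely what is missing. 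You are candid about this, so the write-up faithfully describes the state of the art rather than proving the statement.

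However, Step~2 contains a substantive error independent of the Step~1 obstruction: you assert that the minimum $\ell$-degree threshold for a fractional perfect matching is ``known'' to be $d\binom{n'}{k-\ell}+o(n^{k-\ell})$, i.e.\ the value forced by the space barrier $H^*$. This is not known in general --- it is the conjecture of Alon, Frankl, Huang, R\"odl, Ruci\'nski and Sudakov~\cite{AFHRRS} that $c^*_{k,\ell}=1-(1-1/k)^{k-\ell}$ for all $k,\ell$, and the paper cites proofs only for $\ell\ge k/2$~\cite{KOTo} and $\ell=(k-1)/2$~\cite{JieNote}. So even granting the absorbing step, your Step~2 silently relies on a second open conjecture in exactly the regime (small $\ell$) where Step~1 already fails, which means the ``stability dichotomy'' escape hatch you float at the end cannot be run without resolving that conjecture too.
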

An `exact' version of Conjecture~\ref{conj} (without the error terms) was stated in~\cite{TrZh15}.
There are two types of extremal examples that show, if true, Conjecture~\ref{conj} is asymptotically best possible. 
The first is a so-called \emph{divisibility barrier}: 
Let $V_1$ be a set of $n$ vertices and $A,B$ a partition of $V_1$ where $|A|,|B|$ are as equal as possible whilst ensuring $|A|$ is odd. Let $H_1$ be the $k$-graph with vertex set
$V_1$ and edge set consisting of all those $k$-tuples that contain an even number of vertices from $A$. Then $\delta _{\ell} (H_1)= (1/2+o(1)) \binom{n}{k-\ell}$ for all $1\leq \ell \leq k-1$ but $H_1$ does not contain a perfect matching. (Actually note that there is a \emph{family} of divisibility barrier constructions for this problem; see e.g.~\cite{TrZh15} for more details.)
The second construction is a so-called  \emph{space barrier}: Let $V_2$ be a vertex set of size $n$ and fix $S\subseteq V_2$ with $|S|=n/k-1$. Let $H_2$ be the $k$-graph whose edges are all $k$-sets that intersect $S$. Then $H_2$ does not contain a perfect matching and $\delta _\ell (H_2) =  \left  (1-\left (1-\frac{1}{k}\right)^{k-\ell} +o(1) \right ) \binom{n}{k-\ell}$ for all $1\leq \ell \leq k-1$.

In recent years Conjecture~\ref{conj} (and its exact counterpart) has received substantial attention \cite{AFHRRS, CzKa, HPS, JieNote, Khan1,  Khan2, KO06mat, KOT, MaRu, Pik, RR, RRS06mat, RRS09, TrZh12, TrZh13, TrZh15}. In particular, the \emph{exact} threshold is known for all $\ell$ such that $0.42k \leq \ell \leq k-1$ as well as for a handful of other values of $(k,\ell)$.
For example, R\"odl, Ruci\'nski and Szemer\'edi~\cite{RRS09} determined the codegree threshold for this problem for sufficiently large $k$-graphs $H$ on $n$ vertices. This threshold is $n/2-k+C$ where $C\in\{3/2,2,5/2,3\}$
depends on the value of $n$ and $k$.

Such results give us classes of dense $k$-graphs for which we are certain to have a perfect matching. This raises the question of whether one can lower the minimum $\ell$-degree condition in Conjecture~\ref{conj} whilst still ensuring it is decidable in polynomial time whether such a $k$-graph $H$ has a perfect matching: Let ${\bf PM}(k, \ell, \delta)$ denote the problem of deciding whether there is
a perfect matching in a given $k$-graph on $n$ vertices with minimum $\ell$-degree
at least $\delta \binom{n}{k-\ell}$.  Write ${\bf PM}(k,  \delta):={\bf PM}(k, k-1, \delta)$.

The above mentioned result of R\"odl, Ruci\'nski and Szemer\'edi~\cite{RRS09} implies that ${\bf PM}(k,  1/2)$ is in P. 
On the other hand, for $k \geq 3$ Szyma\'nska~\cite{Szy13} proved that for $\delta<1/k$ the problem $\PM(k,\delta)$ admits a polynomial-time reduction to $\PM(k,0)$ and hence $\PM(k,\delta)$ is also NP-complete. 
Karpi\'nski, Ruci\'nski and Szyma\'nska~\cite{KRS10} proved that there exists an $\e>0$ such that $\PM(k,1/2-\e)$ is in P; they also raised the question of determining the complexity of $\PM(k,\delta)$ for $\delta\in [1/k, 1/2)$. For any $\delta >1/k$, Keevash, Knox and Mycroft~\cite{KKM13} recently proved that $\PM(k,\delta)$ is in $P$. Then very recently this question was completely resolved by the first author~\cite{Han14_poly}
who showed that $\PM(k,\delta)$ is in $P$ for any $\delta \geq 1/k$. 

Note that the minimum codegree of the space barrier construction $H_2$ above is $\delta _{k-1}(H_2)=n/k-1$. So in the case of minimum codegree, the threshold at which $\PM(k,\delta)$ `switches' from NP-complete to P corresponds
to this space barrier. This leads to the question whether the same phenomenon occurs in the case of minimum $\ell$-degree for $\ell \leq k-2$. 
In support of this, Szyma\'nska~\cite{Szy13}  proved that ${\bf PM}(k, \ell, \delta)$ is NP-complete when
$\delta <1-(1-1/k)^{k-\ell}$. This led Keevash, Knox and Mycroft~\cite{KKM13} to pose the following conjecture.
\begin{conjecture}[Keevash, Knox and Mycroft~\cite{KKM13}]\label{conj1}
${\bf PM}(k, \ell, \delta)$ is in P for every $\delta >1-(1-1/k)^{k-\ell}$.
\end{conjecture}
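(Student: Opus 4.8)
The plan is to derive Conjecture~\ref{conj1} (for those pairs $(k,\ell)$ covered by the ingredient below) as a direct application of the general tool Theorem~\ref{genthm}, taking $F$ to be a single edge, so that a perfect $F$-packing in $H$ is exactly a perfect matching of $H$. Fix $\delta>1-(1-1/k)^{k-\ell}$ and write $\delta=1-(1-1/k)^{k-\ell}+\eps$ for some fixed $\eps>0$, and let $\mathcal H$ be the class of all $k$-graphs $H$ on $n$ vertices with $k\mid n$ and $\delta_\ell(H)\ge\delta\binom{n}{k-\ell}$. The whole argument then reduces to checking that $\mathcal H$ (with $F$ a single edge) meets the hypotheses of Theorem~\ref{genthm}; apart from routine side conditions, the substantive hypothesis is that every $H\in\mathcal H$ contains an \emph{almost} perfect matching, i.e.\ a matching covering all but at most $C=C(k,\ell,\eps)$ of its vertices.

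The first step is to establish this almost perfect matching statement, which is where the hypothesis $\delta>1-(1-1/k)^{k-\ell}$ is used. The number $1-(1-1/k)^{k-\ell}$ is precisely the minimum $\ell$-degree threshold that forces a perfect \emph{fractional} matching: the space barrier $H_2$ from the introduction has none (every fractional matching of $H_2$ has total weight at most $|S|=n/k-1<n/k$), and it is essentially the only obstruction. Hence every $H\in\mathcal H$ has a perfect fractional matching, and a standard argument --- hypergraph regularity to pass from a fractional perfect matching to an integer matching missing only $o(n)$ vertices, followed by an absorbing/augmenting step exploiting the minimum $\ell$-degree --- upgrades this to a matching of size $n/k-O(1)$. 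For this I would quote the near-perfect matching results already in the literature (building on R\"odl--Ruci\'nski--Szemer\'edi~\cite{RRS09}, H\`an--Person--Schacht~\cite{HPS}, K\"uhn--Osthus~\cite{KO06mat} and Alon et al.~\cite{AFHRRS}); it is exactly the current availability of such results that restricts Conjecture~\ref{conj1} to a range of $(k,\ell)$.

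Given the almost perfect matching, Theorem~\ref{genthm} finishes the job: it turns the question of whether a given $H\in\mathcal H$ has a perfect matching into the question of whether $H$ avoids a bounded list of explicit obstruction configurations, each testable in polynomial time. For $F$ a single edge these obstructions are divisibility barriers --- partitions of $V(H)$ into a bounded number of parts for which the lattice generated by the index vectors of the edges of $H$ fails to contain the relevant ``all vertices'' vector. The resulting algorithm thus (a) locates the guaranteed almost perfect matching, (b) enumerates the bounded family of candidate partitions and their lattices and checks whether $H$ is a divisibility barrier with respect to any of them, and (c) reports ``no perfect matching'' exactly when one is found. (When $1-(1-1/k)^{k-\ell}>1/2$ there are no divisibility barriers in $\mathcal H$ at all and the answer is always ``yes'', matching the known exact-degree results; the substantive regime is $\ell$ small, where $1-(1-1/k)^{k-\ell}<1/2$ and $\mathcal H$ genuinely contains divisibility barriers, e.g.\ $H_1$.)

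I expect the main obstacle to be the structural work hidden inside ``$\mathcal H$ meets the hypotheses of Theorem~\ref{genthm}'': proving that within $\mathcal H$ the divisibility barriers are the \emph{only} obstructions to a perfect matching, and obtaining the almost perfect matching with a \emph{constant} (not $o(n)$) defect, both carried out right \emph{at} the space-barrier threshold rather than comfortably above the divisibility threshold $1/2$. Both require stability information about $k$-graphs whose minimum $\ell$-degree only just exceeds $1-(1-1/k)^{k-\ell}$, which is delicate and is what limits the admissible range of $(k,\ell)$. Finally, Szyma\'nska's NP-completeness results~\cite{Szy13} for $\delta<1-(1-1/k)^{k-\ell}$ show the threshold in Conjecture~\ref{conj1} cannot be lowered, so the outcome is best possible.
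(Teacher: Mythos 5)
Your overall strategy—deduce an almost perfect matching from the fractional matching threshold, then feed this into Theorem~\ref{genthm} to reduce the decision problem to checking a bounded list of lattice conditions—does match the paper's approach to Theorem~\ref{mainthm}, and you correctly flag that the threshold $1-(1-1/k)^{k-\ell}$ is only known to equal $c^*_{k,\ell}$ for some $(k,\ell)$. But there is a genuine gap in the middle of your argument, and as a result you misidentify which constraint actually limits the admissible range.

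Theorem~\ref{genthm} does not ``finish the job'' automatically once an almost perfect matching is available. Its hypotheses (ii) and (iii)—that $V(H)$ admits an $(F,\beta,t,c)$-good partition $\cP$ and that the coset group $Q(\cP,L^{\mu}_{\cP,E}(H))$ has bounded size—must be \emph{constructed and verified} for the class $\mathcal H$ at hand, and this is where the real work lies. The paper does this via a case split (Section~8): either some vertex $v$ has $|\tilde N_{\alpha,1}(v,H)|<\eta n$, in which case $V(H)\setminus W$ is $1$-closed and one argues directly that $H$ has a perfect matching; or every vertex has a large reachable neighbourhood, in which case Lemma~\ref{lem:P} produces a partition $\cP$ with \emph{at most two parts}, and a separate lattice argument shows $|Q|\le k$. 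Crucially, both the bound ``at most two parts'' (via property $(*)$: any three vertices contain a reachable pair) and the ruling out of the degenerate case $|I^{\mu}_{\cP,E}(H)|=1$ rely on $\delta_\ell(H)\ge(1/3+\gamma)\binom{n-\ell}{k-\ell}$. This is why Theorem~\ref{mainthm} requires $\delta^*=\max\{1/3,c^*_{k,\ell}\}$ rather than $c^*_{k,\ell}$ alone, and hence why the corollary verifies Conjecture~\ref{conj1} only for $(k-1)/2\le\ell\le(1+\ln(2/3))k$. Your proposal attributes the restricted range to ``stability information near the space-barrier threshold''; in fact the almost perfect matching with constant defect is attainable essentially at $c^*_{k,\ell}$ (Theorem~\ref{thm:alm_mat}), and the binding constraint is the coset-group and partition control requiring $\delta>1/3$. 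Without supplying an argument that keeps the number of parts of $\cP$ and the size of $Q$ bounded below degree $1/3$, your plan cannot prove more of Conjecture~\ref{conj1} than the paper does, and it currently does not identify, let alone fill, this step.
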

As an application of Theorem~\ref{genthm} we verify Conjecture~\ref{conj1} in a range of cases. To state our result, we first must introduce the notion of a perfect fractional matching:
Let $H$ be a $k$-graph on $n$ vertices. A \emph{fractional matching} in $H$ is a function $w: E(H) \rightarrow [0,1]$ such that for each $v \in V(H)$ we have that $\sum _{e \ni v} w(e)\leq1$. Then $\sum _{e \in E(H)} w(e)$ is the \emph{size} of $w$. If the size of the largest fractional matching $w$ in $H$ is $n/k$ then we say that $w$ is a \emph{perfect fractional matching}.
Given $k,\ell \in \mathbb N$ such that $\ell \leq k-1$, define $c^*_{k,\ell}$ to be the smallest number $c$ such that every $k$-graph $H$ on $n$ vertices with $\delta _{\ell} (H) \geq (c +o(1)) \binom{n-\ell}{k-\ell}$ contains a perfect fractional matching.
We can now state our complexity result for perfect matchings.
\begin{theorem}\label{mainthm}
Given  $k,\ell \in \mathbb N$ such that $1\le \ell \leq k-1$, define $\delta ^*:= \max \{ 1/3, c^* _{k,\ell}\}$. Given any $\delta \in (\delta ^*,1]$, ${\bf PM}(k, \ell, \delta)$ is in $P$. 
That is, for every $n$-vertex $k$-graph $H$ with minimum $\ell$-degree at least $\delta \binom{n-\ell}{k-\ell}$, there is an algorithm with running time $O(n^{k^2})$ which determines whether $H$ contains
a perfect matching.
\end{theorem}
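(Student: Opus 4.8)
The plan is to derive Theorem~\ref{mainthm} as a direct application of the general tool Theorem~\ref{genthm}, taking $F$ to be the single $k$-edge $K_k^{(k)}$, so that perfect $F$-packings are precisely perfect matchings. The polynomial-time algorithm and the bound $O(n^{k^2})$ on its running time will then be supplied by Theorem~\ref{genthm} itself; all of the work is in checking that every $n$-vertex $k$-graph $H$ with $\delta_\ell(H) \ge \delta\binom{n-\ell}{k-\ell}$ and $\delta > \delta^* = \max\{1/3, c^*_{k,\ell}\}$ lies in the class to which Theorem~\ref{genthm} applies, i.e.\ that $H$ satisfies the hypotheses of that theorem.

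The principal hypothesis to verify is that $H$ contains a matching covering all but a bounded number of vertices and, more robustly, that the same holds for $H-W$ for every vertex set $W$ of bounded size; this ``almost perfect matching'' property is the structural input that lets Theorem~\ref{genthm} reduce the existence of a perfect matching to detecting a bounded ``last obstruction''. Since $\delta > c^*_{k,\ell}$, the definition of $c^*_{k,\ell}$ already gives that $H$ has a perfect fractional matching, and deleting $O(1)$ vertices alters each $\ell$-degree and $\binom{n-\ell}{k-\ell}$ by only lower-order amounts, so $H-W$ still satisfies $\delta_\ell(H-W) \ge (c^*_{k,\ell}+o(1))\binom{|V(H-W)|-\ell}{k-\ell}$ and hence also has a perfect fractional matching. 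One then upgrades a perfect fractional matching to an integer matching missing only a constant number of vertices, using weak hypergraph regularity together with an absorbing argument in the now-standard way; I would isolate this as a lemma stating that $\delta_\ell(H) \ge (c^*_{k,\ell}+o(1))\binom{n-\ell}{k-\ell}$ forces a matching covering all but $C=C(k,\ell)$ vertices.

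It remains to check the more technical hypotheses of Theorem~\ref{genthm}, which concern the behaviour of the ``reachability'' (or ``closure'') structure on $V(H)$ governing the index vectors of near-perfect matchings --- for instance, that $V(H)$ partitions into a bounded number of closed parts. This is the point at which the condition $\delta > 1/3$ is used: a reachability argument shows that above this threshold any two vertices of $H$ are connected by many bounded-size swapping configurations, which pins down the closure structure. Once all hypotheses of Theorem~\ref{genthm} are verified, the theorem delivers the stated conclusion. The main obstacle I anticipate is precisely the robust almost-perfect-matching step --- arranging that the ``all but $C$ vertices'' conclusion follows from the fractional threshold $c^*_{k,\ell}$ uniformly over deletions of bounded vertex sets, and dovetails exactly with the hypotheses of Theorem~\ref{genthm} --- together with pinning down where the $1/3$ is genuinely needed. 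Note that the gap between $\delta^*$ and the conjecturally optimal $1-(1-1/k)^{k-\ell}$ is entirely this $1/3$ term plus the strength of what $c^*_{k,\ell}$ yields, so narrowing it would require strengthening Theorem~\ref{genthm} rather than its application.
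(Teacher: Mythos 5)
Your overall skeleton is right --- the paper does instantiate Theorem~\ref{genthm} with $F$ a single edge --- but there are two genuine gaps in the plan, and a misconception about what $\delta>1/3$ buys you.

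First, you cannot simply ``verify that every $H$ under consideration satisfies the hypotheses of Theorem~\ref{genthm}.'' The paper explicitly proceeds by a dichotomy: if some vertex $v$ has $|\tilde{N}_{\alpha,1}(v,H)|\le \eta n$, then one argues \emph{directly} (removing $W=\{v\}\cup\tilde{N}_{\alpha,1}(v,H)$, building an absorbing set inside the remaining $(\alpha,1)$-closed vertex set via Lemma~\ref{lem:abs} with $d=1$, and finishing with an almost-perfect matching) that $H$ \emph{must} have a perfect matching --- Theorem~\ref{genthm} is not invoked at all in this case, and indeed in this case there may be no good partition of $V(H)$ at all. Only in the complementary case --- all reachability neighbourhoods are large --- does one apply Lemma~\ref{lem:P} to produce an $(F,\beta,t,c)$-good partition and then invoke Theorem~\ref{genthm}. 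Your plan omits this dichotomy entirely, yet it is the central device (and is precisely the ``must have a perfect matching, or the coset group is bounded'' structure the authors advertise in Section~\ref{state}).

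Second, your account of where $\delta>1/3$ is used is off. The degree bound $\delta_1(H)\ge(1/3+\gamma)\binom{n-1}{k-1}$ does \emph{not} imply that ``any two vertices of $H$ are connected by many bounded-size swapping configurations''; that would require something closer to $\delta>1/2$. What $1/3$ gives (via Lemma~\ref{LM4.2} and a pigeonhole-style counting) is the weaker statement ($*$) that \emph{every three vertices contain two that are $(\alpha,1)$-reachable}. This is exactly what is needed to run Lemma~\ref{lem:P} with $c=2$, producing a closed partition with $r\le 2$ parts. Related to this, you also gloss over condition~(iii) of Theorem~\ref{genthm}, the bound $|Q(\cP,L^\mu_{\cP,E}(H))|\le k$. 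With $r\le 2$, the paper shows $I^\mu_{\cP,E}(H)$ must contain at least two distinct index $k$-vectors $(a,b)\ne(a',b')$ (the single-element case is ruled out by a separate averaging argument using $\delta_\ell(H)\ge(1/3+\gamma)\binom{n-\ell}{k-\ell}$ and $\ell\ge 2$), from which $|Q|\le|a-a'|\le k$ follows. None of this is in your plan; yet without it Theorem~\ref{genthm} cannot be applied. Finally, two smaller points: the $\ell=1$ case is dispatched separately by citing the result of Alon, Frankl, Huang, R\"odl, Ruci\'nski and Sudakov (it forces a perfect matching outright), and the almost-perfect-matching input is simply Theorem~\ref{thm:alm_mat} --- you do not need to re-derive it from the fractional threshold via weak regularity, nor do you need robustness under deleting $O(1)$ vertices, since condition~(i) of Theorem~\ref{genthm} is a single degree bound rather than a robustness statement.
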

Alon,  Frankl,  Huang, R\"odl, Ruci\'nski, and  Sudakov~\cite{AFHRRS} conjectured that $c^*_{k,\ell}=1-(1-1/k)^{k-\ell}$ for all $\ell,k \in \mathbb N$. Thus, Theorem~\ref{mainthm}
verifies Conjecture~\ref{conj1} in all cases where $c^*_{k,\ell}=1-(1-1/k)^{k-\ell}$ and $c^*_{k,\ell}\geq 1/3$. 
In particular, K\"uhn, Osthus and Townsend~\cite[Theorem 1.7]{KOTo} proved that $c^*_{k,\ell}=1-(1-1/k)^{k-\ell}$ in the case when $\ell \geq k/2$ and the first author~\cite[Theorem 1.5]{JieNote} proved that $c^*_{k,\ell}=1-(1-1/k)^{k-\ell}$ in the case when $\ell = (k-1)/2$.

Note that for all $1\leq \ell \leq k-1$,
\[
\left( \frac{k-1}{k} \right) ^{k-\ell} < \left ( \frac{1}{e} \right)^{1-\frac{\ell}{k}} .
\]
Thus,  $1-(1-1/k)^{k-\ell}\geq 1/3$ if $\ell \leq (1+\ln (2/3))k \approx 0.5945k$. (Here $\ln$ denotes the natural logarithm function.) Altogether, this implies the following.
\begin{corollary}
 Conjecture~\ref{conj1} holds for all $k,\ell \in \mathbb N$ such that
$(k-1)/2 \leq \ell \leq (1+\ln (2/3))k$.
\end{corollary}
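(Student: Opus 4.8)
The plan is to deduce the corollary directly from Theorem~\ref{mainthm}, by showing that for the stated range of $\ell$ the quantity $\delta^* = \max\{1/3, c^*_{k,\ell}\}$ appearing in that theorem is exactly $1-(1-1/k)^{k-\ell}$, which is precisely the threshold in Conjecture~\ref{conj1}. So the corollary is a matter of assembling the pieces already recorded in the text.

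First I would identify $c^*_{k,\ell}$ for integer $\ell$ with $(k-1)/2 \le \ell \le k-1$, via a short parity case analysis. If $k$ is even then every such $\ell$ satisfies $\ell \ge k/2$, so $c^*_{k,\ell} = 1-(1-1/k)^{k-\ell}$ by the result of K\"uhn, Osthus and Townsend~\cite{KOTo} quoted above. If $k$ is odd, then either $\ell = (k-1)/2$, which is covered by~\cite{JieNote}, or $\ell \ge (k+1)/2 > k/2$, again covered by~\cite{KOTo}. In all cases $c^*_{k,\ell} = 1-(1-1/k)^{k-\ell}$.

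Next I would check that $1-(1-1/k)^{k-\ell} \ge 1/3$ throughout $\ell \le (1+\ln(2/3))k$, so that the constant $1/3$ is not the binding term in the maximum defining $\delta^*$. This is the elementary computation indicated just before the corollary: since $1-1/k < e^{-1/k}$ and $k-\ell > 0$, we get $(1-1/k)^{k-\ell} < e^{-(1-\ell/k)}$, and $e^{-(1-\ell/k)} \le 2/3$ precisely when $1-\ell/k \ge \ln(3/2)$, i.e. when $\ell \le (1-\ln(3/2))k = (1+\ln(2/3))k$. Hence $1-(1-1/k)^{k-\ell} > 1/3$ on the whole range.

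Combining the two observations, $\delta^* = \max\{1/3,\, 1-(1-1/k)^{k-\ell}\} = 1-(1-1/k)^{k-\ell}$ for every $(k,\ell)$ with $(k-1)/2 \le \ell \le (1+\ln(2/3))k$. Theorem~\ref{mainthm} then gives that ${\bf PM}(k,\ell,\delta)$ is in P for all $\delta \in (1-(1-1/k)^{k-\ell},\, 1]$; and for $\delta > 1$ the problem is trivially in P since no $n$-vertex $k$-graph has $\delta_\ell(H) > \binom{n-\ell}{k-\ell}$. Therefore ${\bf PM}(k,\ell,\delta)$ is in P for every $\delta > 1-(1-1/k)^{k-\ell}$, which is the statement of Conjecture~\ref{conj1}. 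There is no real obstacle beyond correctly invoking the known values of $c^*_{k,\ell}$; the only point needing a moment's care is the parity case analysis ensuring that the interval $(k-1)/2 \le \ell \le k-1$ is entirely covered by the existing fractional-matching results of~\cite{JieNote, KOTo}.
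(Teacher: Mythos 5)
Your proposal is correct and matches the paper's own reasoning: the paper obtains the corollary by the same two observations, namely that the cited results of K\"uhn--Osthus--Townsend ($\ell\ge k/2$) and Han ($\ell=(k-1)/2$) cover the integer range $(k-1)/2\le\ell\le k-1$ and give $c^*_{k,\ell}=1-(1-1/k)^{k-\ell}$, while the elementary bound $(1-1/k)^{k-\ell}<e^{-(1-\ell/k)}$ shows this quantity is at least $1/3$ on the range $\ell\le(1+\ln(2/3))k$, so that $\delta^*$ in Theorem~\ref{mainthm} equals the conjectured threshold. Your parity case split just makes explicit the step the paper leaves implicit.
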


\subsection{Perfect packings in graphs}
Several complexity problems for perfect packings in graphs have received attention. Given a graph $F$, we write $|F|$ for its order and $\chi(F)$ for its chromatic number.
For approximating the size of a maximal $F$-packing, Hurkens and Schrijver \cite{HuSc} gave an $(|F|/2+\e)$-approximation algorithm (where $\e>0$ is arbitrary) which runs in polynomial time.
On the other hand, Kann \cite{Kann} proved that the problem is APX-hard if $F$ has a component which contains at least three vertices.
(In other words, it is impossible to approximate the optimum solution within an arbitrary factor unless P$=$NP.)
In contrast, the results in \cite{HeKi} imply that the remaining cases of the problem can be solved in polynomial time.

The following classical result of Hajnal and Szemer\'edi~\cite{hs} characterises the minimum degree that ensures a graph contains a perfect $K_r$-packing. 
\begin{theorem}[Hajnal and Szemer\'edi~\cite{hs}]\label{hs}
Every graph $G$ whose order $n$
is divisible by $r$ and whose minimum degree satisfies $\delta (G) \geq (1-1/r)n$ contains a perfect $K_r$-packing. 
\end{theorem}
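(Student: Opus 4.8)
The plan is to derive Theorem~\ref{hs} from its equivalent \emph{equitable colouring} form. Pass to the complement: the hypothesis $\delta(G)\ge(1-1/r)n$ says exactly that $\Delta(\overline G)\le n/r-1$. Writing $s:=n/r$, which is an integer since $r\mid n$, a perfect $K_r$-packing in $G$ is precisely a partition of $V(G)$ into $s$ classes, each of size $r=n/s$ and each independent in $\overline G$; that is, an equitable $s$-colouring of $\overline G$. Since $\Delta(\overline G)+1\le s$, it suffices to prove the (more general) statement: every graph $H$ on $n$ vertices with $\Delta(H)<s$ and $s\mid n$ admits an equitable colouring with $s$ colours. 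I would prove this general form, because the induction below only closes if $H$ is allowed to be arbitrary rather than the complement of a dense graph.

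I would induct on $n$, inserting the vertices of $H$ one at a time. Delete a vertex $v$; by induction $H-v$ has an equitable $s$-colouring, which (as $n-1=r(s-1)+(r-1)$) has $s-1$ classes of size $r$ and one class $A$ of size $r-1$. Now reinsert $v$. Since $\deg_H(v)<s$, some colour class $W$ contains no neighbour of $v$; if $W=A$ we are done, so assume every $N(v)$-free class has size $r$, fix one such class $B$, and tentatively set $V(v):=B\cup\{v\}$ (size $r+1$). It remains to rebalance by shifting one vertex out of the oversized class $B\cup\{v\}$, through a chain of classes, and into $A$, each step replacing a vertex by one having no neighbour in the receiving class. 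Formally, form the auxiliary digraph $D$ on colour classes with an arc $X\to Y$ whenever $X$ contains a vertex with no neighbour in $Y$; if $D$ has a directed path from $B\cup\{v\}$ to $A$, executing the moves along it produces an equitable $s$-colouring of $H$, and the induction goes through.

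The crux — and the step I expect to be the main obstacle — is the case where no such path exists. Let $\mathcal B$ be the set of classes reachable from $B\cup\{v\}$ in $D$; then $A\notin\mathcal B$, every class of $\mathcal B$ has size $\ge r$, and (by maximality of $\mathcal B$) every vertex in a class of $\mathcal B$ has a neighbour in every class outside $\mathcal B$. Comparing the resulting lower bound on the number of edges between $\bigcup\mathcal B$ and its complement with the upper bound forced by $\Delta(H)<s$ is meant to yield a contradiction. The difficulty is that the naive count falls short by a bounded amount — precisely the ``$+1$'' of slack that makes a direct induction on Theorem~\ref{hs} fail, since deleting one copy of $K_r$ lowers $\delta$ slightly below $(1-1/r)(n-r)$. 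To recover it one must enrich the set of legal moves: besides shifting a single vertex one also allows a ``double swap'' that trades a vertex of an oversized class for a vertex of a size-$r$ class in $\mathcal B$, then re-runs the reachability argument; analysing which configurations are terminal under this richer move set and showing that a terminal configuration forces a vertex of degree $\ge s$ is the technical heart of the argument (this is the mechanism in the Hajnal--Szemer\'edi proof, streamlined by Kierstead and Kostochka). Once the contradiction is reached the augmenting structure must exist, the induction closes, and unwinding the complementation turns the equitable $s$-colouring of $\overline G$ into a perfect $K_r$-packing of $G$. (For large $n$ one could instead combine the regularity lemma with an absorbing argument, but obtaining the bound for \emph{every} admissible $n$ seems to require the exchange argument above.)
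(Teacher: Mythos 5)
The paper does not prove Theorem~\ref{hs}; it is invoked as a classical result of Hajnal and Szemer\'edi and used as a black box to motivate Theorems~\ref{thm:AY}, \ref{thm:KSS} and~\ref{thm:KO}, so there is no in-paper argument to compare yours against. That said, your complementation reduction is exactly right: $\delta(G)\ge(1-1/r)n$ is equivalent to $\Delta(\overline G)\le n/r-1$, and with $s:=n/r$ a perfect $K_r$-packing in $G$ corresponds precisely to an equitable $s$-colouring of $\overline G$, so the statement to induct on is the equitable-colouring form. Your inductive scheme (remove a vertex, equitably $s$-colour the remainder, reinsert, and rebalance along a directed path in the auxiliary accessibility digraph on colour classes) is the correct Kierstead--Kostochka architecture for this theorem.

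However, as you yourself concede, the sketch stops exactly at the step that carries all of the difficulty: showing that when no directed path from the oversized class $B\cup\{v\}$ to the small class $A$ exists, the terminal configuration forces a vertex of degree at least $s$. You correctly observe that a naive count of edges leaving $\bigcup\mathcal B$ falls short by a bounded amount and that one must enlarge the move set with carefully chosen vertex swaps between classes of different sizes inside and outside $\mathcal B$ --- but identifying the right swap structure, proving that terminal configurations under the enriched moves still force a high-degree vertex, and checking that the swaps preserve properness and sizes is the bulk of the Hajnal--Szemer\'edi and Kierstead--Kostochka arguments, not a routine detail. So the proposal is an accurate and well-aimed roadmap of a known proof, with the crux flagged rather than closed; as a self-contained verification of Theorem~\ref{hs} it has a genuine gap at precisely the point you anticipated.
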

By considering a complete $r$-partite graph $G$ with vertex classes of almost equal size, one can see that the minimum degree condition in Theorem~\ref{hs} cannot be lowered.
Kierstead, Kostochka, Mydlarz and Szemer\'edi~\cite{KKMS} gave a version of Theorem~\ref{hs} which also yields a fast (polynomial time) algorithm for producing the perfect $K_r$-packing.

Up to an error term, the following theorem of Alon and Yuster \cite{AY96} generalises Theorem~\ref{hs}.
Let $M(n)$ be the time needed to multiply two $n$ by $n$ matrices with $0,1$ entries. (Here the entries are viewed as elements of $\mathbb Z$.)
Determining $M(n)$ is a challenging problem in theoretic computer science, and the best known bound of  $M(n)=O(n^{2.3728639})$ was obtained by Le Gall~\cite{LeGall14}.

\begin{theorem}[Alon and Yuster \cite{AY96}]\label{thm:AY}
For every $\r>0$ and each graph $F$ there exists an integer $n_0=n_0(\r, F)$ such that every graph $G$ whose order $n\ge n_0$ is divisible by $|F|$ and whose minimum degree is at least $(1-1/\chi(F)+\r)n$ contains a perfect $F$-packing.
Moreover, there is an algorithm which finds this $F$-packing in time $O(M(n))$.
\end{theorem}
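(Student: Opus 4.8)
The plan is to deduce Theorem~\ref{thm:AY} from Szemer\'edi's Regularity Lemma, reducing the existence of a perfect $F$-packing in $G$ to the existence of a perfect $K_r$-packing in a dense reduced graph, where $r:=\chi(F)$; the latter is supplied by Theorem~\ref{hs}. For the algorithmic part one works throughout with constructive versions of the tools: the algorithmic Regularity Lemma of Alon, Duke, Lefmann, R\"odl and Yuster --- whose $O(M(n))$ running time is precisely the source of the $O(M(n))$ in the statement --- the algorithmic form of Theorem~\ref{hs} due to Kierstead, Kostochka, Mydlarz and Szemer\'edi, and a constructive embedding lemma for placing copies of $F$ into regular tuples.

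Concretely, fix constants $1/n_0 \ll \eps \ll d \ll \min\{\r,\,1/|F|\}$. First I would run the algorithmic Regularity Lemma on $G$, obtaining in time $O(M(n))$ a partition $V(G)=V_0\cup V_1\cup\dots\cup V_t$ with $|V_0|\le\eps n$, $|V_1|=\dots=|V_t|=m$, at most $\eps t^2$ of the pairs $(V_i,V_j)$ irregular, and $t=t(\eps)$ bounded. Let $R$ be the reduced graph on $[t]$, with $ij\in E(R)$ precisely when $(V_i,V_j)$ is $\eps$-regular of density at least $d$. A routine averaging argument --- discarding into an exceptional pool the few clusters that lie in many irregular pairs, and using $\delta(G)\ge(1-1/r+\r)n$, $|V_0|\le\eps n$ and $\eps,d\ll\r$ --- gives $\delta(R)\ge(1-1/r+\r/2)t$; discarding at most $r-1$ further clusters we may assume $r\mid t$. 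By the algorithmic Hajnal--Szemer\'edi theorem, $R$ then has a perfect $K_r$-packing $\{K^{(1)},\dots,K^{(t/r)}\}$, found in time $O_\eps(1)$, each $K^{(j)}$ being an $r$-tuple of clusters all of whose pairs are $\eps$-regular of density at least $d$; since $\chi(F)=r$, copies of $F$ embed into such tuples.

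It remains to cover the exceptional pool $W$ (of size $o(n)$) and to pack each cluster-tuple perfectly. Exploiting the minimum degree condition directly --- every $w\in W$ has more than $(1-1/r)n$ neighbours and hence lies in a copy of $F$ built greedily vertex by vertex inside a common neighbourhood --- I would cover $W$ by $|W|/|F|$ vertex-disjoint copies of $F$, choosing the remaining vertices of each copy from the still-unused parts of the clusters so that, within every $K_r$-tuple, the cluster sizes are reduced in a controlled way: they stay near-balanced and meet the divisibility needed for a perfect $F$-packing of the tuple (the precise requirement being that the part sizes lie in the nonnegative integer cone generated by the permutations of $F$'s colour-class sizes $a_1,\dots,a_r$, where $\sum a_i=|F|$). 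A bounded amount of preliminary adjustment --- enabled by $|F|\mid n$ and $n$ large --- arranges both $|F|\mid|W|$ and these tuple conditions. Finally, inside each $K_r$-tuple the surviving vertices form a spanning regular $r$-partite \emph{blow-up} whose part sizes satisfy the cone condition, so --- after passing to a super-regular sub-tuple and mopping up the tiny remainder via the minimum degree condition once more --- a constructive embedding argument (a Key-Lemma type argument, or the algorithmic Blow-up Lemma of Koml\'os, S\'ark\"ozy and Szemer\'edi) produces a perfect $F$-packing of that tuple in polynomial time. The union of all these packings with the copies covering $W$ is the desired perfect $F$-packing of $G$.

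The main obstacle is not the existence of the packing --- the above is in essence the Alon--Yuster argument --- but keeping every step constructive and, more subtly, converting an almost-perfect $F$-packing into a perfect one: the exceptional vertices and the imbalances forced on the cluster-tuples must be removed while preserving exactly the divisibility and near-balancedness invariants that the final embedding step needs. The genuinely delicate point is the last $o(n)$ vertices of each tuple, where a plain greedy embedding stalls and one needs super-regularity together with a Blow-up-Lemma-type argument (or the more hands-on iterative argument of the original proof). A secondary chore is to track the running times so that each step fits within the $O(M(n))$ budget, the Regularity Lemma being the designed bottleneck.
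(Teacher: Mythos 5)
The paper does not actually prove this statement: Theorem~\ref{thm:AY} is cited verbatim from Alon and Yuster's 1996 paper and is used in the sequel as a black box (indeed, the subsequent discussion leans on it only through Theorems~\ref{thm:KSS}, \ref{thm:ShZh} and~\ref{thm:KO}). So there is no ``paper's proof'' to compare against, and your task was in effect to reconstruct the Alon--Yuster argument.

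Your outline is the standard one and is faithful to how the theorem is actually proved: algorithmic Regularity Lemma, pass to the reduced graph $R$, apply the (algorithmic) Hajnal--Szemer\'edi theorem to $R$, handle the exceptional pool greedily using the minimum degree, and then pack each super-regular $K_r$-tuple. The reduction $\chi(F)=r$ to a $K_r$-factor in $R$ and the care about residues of cluster sizes modulo the colour-class pattern of $F$ are exactly the right points to flag.

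The one place where the sketch, as written, does not quite deliver the stated bound is the running time of the final tuple-packing step. The algorithmic Blow-up Lemma of Koml\'os, S\'ark\"ozy and Szemer\'edi runs in time $O(nM(n))$; if you invoke it on each $K_r$-tuple you land at $O(nM(n))$ overall, which is the running time of Theorem~\ref{thm:KSS}, \emph{not} the $O(M(n))$ claimed here. This is not a coincidence: the extra factor of $n$ is precisely the price for losing the $\r n$ slack, and the Alon--Yuster theorem has that slack available. To stay inside $O(M(n))$ you must finish without the Blow-up Lemma, exploiting the degree surplus to pack each tuple greedily (a Key-Lemma/counting-lemma embedding is fast) while leaving a small leftover per tuple, and then absorbing the combined leftover with the minimum-degree greedy cover --- arranging the divisibility bookkeeping so that the leftover is controllable. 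Your parenthetical ``or the more hands-on iterative argument of the original proof'' is exactly the right instinct, but it is also where the sketch is thinnest; as it stands, the default route you describe proves the weaker time bound.
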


In~\cite{AY96}, they also conjectured that the error term $\r n$ in Theorem~\ref{thm:AY} can be replaced by a constant $C(F)>0$ depending only on $F$; this has been verified by Koml\'{o}s, S\'{a}rk\"{o}zy and Szemer\'{e}di \cite{KSS-AY}.

\begin{theorem}[Koml\'{o}s, S\'{a}rk\"{o}zy and Szemer\'{e}di \cite{KSS-AY}]\label{thm:KSS}
For every graph $F$ there exist integers $C <|F|$ and $n_0=n_0(F)$ such that every graph $G$ whose order $n\ge n_0$ is divisible by $|F|$ and whose minimum degree is at least $(1-1/\chi(F))n+C$ contains a perfect $F$-packing.
Moreover, there is an algorithm which finds this $F$-packing in time $O(n M(n))$.
\end{theorem}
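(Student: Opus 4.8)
The plan is to prove this by the \emph{regularity--blow-up method} combined with an absorption-type switching argument, refining the proof strategy behind Theorem~\ref{thm:AY} so as to reduce the additive slack in the minimum-degree hypothesis from linear in $n$ down to a bounded constant. Throughout write $r:=\chi(F)$ and $f:=|F|$. First I would apply Szemer\'edi's Regularity Lemma to $G$ with a sufficiently small regularity parameter $\e>0$ and a large lower bound on the number of parts, obtaining a partition $V(G)=V_0\cup V_1\cup\cdots\cup V_t$ with $|V_0|\le \e n$, clusters of a common size $m$, and all but at most $\e t^2$ pairs $\e$-regular. Let $R$ be the reduced graph on $[t]$, with $ij\in E(R)$ exactly when $(V_i,V_j)$ is $\e$-regular of density at least $\sqrt{\e}$; a routine averaging computation from $\delta(G)\ge(1-1/r)n+C$ gives $\delta(R)\ge(1-1/r+\gamma)t$ for a constant $\gamma>0$. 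After discarding at most $r-1$ clusters into $V_0$ we may assume $r\mid t$, so by the Hajnal--Szemer\'edi theorem (Theorem~\ref{hs}) applied to $R$ we obtain a perfect $K_r$-packing $\mathcal K$ of $R$.

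Next, for each copy of $K_r$ in $\mathcal K$ I would move a tiny proportion of vertices from its $r$ clusters into $V_0$ to make the corresponding $r$-partite subgraph super-regular, and then shift $O(f)$ further vertices between (and, if needed, out of) those $r$ clusters so that each resulting part has size divisible by a suitable constant $d=d(F)$; here $d(F)$ is chosen so that the balanced complete $r$-partite graph with parts of size divisible by $d(F)$ has a perfect $F$-packing, a standard fact available since $\chi(F)=r$ and already implicit in the proof of Theorem~\ref{thm:AY}. Applying the Blow-up Lemma to each super-regular $r$-tuple then produces an $F$-packing $\mathcal F_0$ of $G$ that covers every vertex outside $V_0$, where $V_0$ still has size $O(\e n)$.

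The hard part is to incorporate all of $V_0$ into an $F$-packing while spending only the additive slack $C<f$, rather than a linear amount. Here I would absorb the vertices of $V_0$ one at a time by a switching procedure: given $v\in V_0$, the degree condition guarantees $v$ has at least $(1-1/r)n+C-|V_0|$ neighbours, which --- by super-regularity of the $r$-tuples above --- meet many of those tuples in a transversal-like fashion; one then argues that a bounded number of copies of $F$ of $\mathcal F_0$ lying in one such tuple can be deleted and the freed vertices, together with $v$, re-tiled into copies of $F$. The two delicate points are (a) maintaining, as an invariant through the iteration, that the clusters stay super-regular and suitably balanced so that further switches remain available, and (b) showing that the iteration can be continued until $|V_0|$ has dropped below a constant depending only on $F$, after which the last few vertices are embedded directly using the full strength of $\delta(G)\ge(1-1/r)n+C$; a short divisibility analysis is what forces the terminal constant to be strictly less than $f$. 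I expect (b) --- the quantitative bookkeeping that separates this statement from Theorem~\ref{thm:AY} --- to be the crux of the proof.

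For the algorithmic claim, I would replace each ingredient by its constructive version: the algorithmic Regularity Lemma, the Kierstead--Kostochka--Mydlarz--Szemer\'edi algorithm underlying Theorem~\ref{hs}, and the algorithmic Blow-up Lemma. Each individual switch in the absorption step is a bounded local search, and locating a single copy of $F$ inside a dense regular piece reduces to matrix multiplication of $0,1$-matrices --- this is where the factor $M(n)$ enters --- so, since $O(n)$ switches suffice, the total running time is $O(nM(n))$.
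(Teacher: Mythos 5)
This is Theorem~\ref{thm:KSS}, which the paper cites from Koml\'os, S\'ark\"ozy and Szemer\'edi~\cite{KSS-AY} and does not prove; there is therefore no in-paper proof to compare your proposal against. That said, your outline is broadly consistent with the published KSS argument: the regularity lemma is applied to $G$; the degree condition $\delta(G)\ge(1-1/\chi(F))n+C$ is transferred to a minimum-degree bound on the reduced graph $R$; the Hajnal--Szemer\'edi theorem (Theorem~\ref{hs}) gives a perfect $K_r$-factor of $R$; the cluster tuples are made super-regular; exceptional vertices are absorbed into clusters to which they have large degree; divisibility is repaired by moving $O(1)$ vertices; and the Blow-up Lemma finishes each super-regular $r$-tuple. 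You also correctly identify that the content separating this from Theorem~\ref{thm:AY} is the book-keeping that keeps the total slack bounded by a constant depending only on $F$ rather than by $\e n$, and that this is where the divisibility analysis and the bound $C<|F|$ enter. For the algorithmic part your citation of the Kierstead--Kostochka--Mydlarz--Szemer\'edi constructive Hajnal--Szemer\'edi result and the algorithmic Regularity and Blow-up Lemmas is the right toolkit; the $M(n)$ factor comes primarily from the algorithmic Regularity Lemma rather than from ``locating a single copy of $F$ by matrix multiplication'' as you suggest, and the extra factor of $n$ (compared with the $O(M(n))$ of Theorem~\ref{thm:AY}) stems from the iterative re-balancing steps, but this is a presentational quibble rather than a gap. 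I cannot certify your sketch against the paper itself since the paper treats this as a black-box citation; to verify the crux points (a) and (b) you flag as delicate, you would need to consult~\cite{KSS-AY} directly.
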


As observed in \cite{AY96}, there are graphs $F$ for which the constant $C(F)$ cannot be omitted completely.
On the other hand, there are graphs $F$ for which the minimum degree condition in Theorem~\ref{thm:KSS} can be improved significantly \cite{Kawa, CKO}, by replacing the chromatic number with the critical chromatic number.
The \emph{critical chromatic number} $\chi_{cr}(F)$ of a graph $F$ is defined as $(\chi(F)-1)|F|/(|F|-\sigma(F))$, where $\sigma(F)$ denotes the minimum size of the smallest colour class in a colouring of $F$ with $\chi(F)$ colours.
Note that $\chi(F)-1<\chi_{cr}(F)\le\chi(F)$ and the equality holds if and only if every $\chi(F)$-colouring of $F$ has equal colour class sizes.
If $\chi_{cr}(F)=\chi(F)$, then we call $F$ \emph{balanced}, otherwise \emph{unbalanced}.
Koml\'{o}s \cite{Komlos} proved that one can replace $\chi(F)$ with $\chi_{cr}(F)$ in Theorem~\ref{thm:KSS} at the price of obtaining an $F$-packing covering all but $\e n$ vertices.
He also conjectured that the error term $\e n$ can be replaced with a constant that only depends on $F$ \cite{Komlos}; this was confirmed by Shokoufandeh and Zhao \cite{ShZh} (here we state their result in a slightly weaker form).

\begin{theorem}[Shokoufandeh and Zhao \cite{ShZh}]\label{thm:ShZh}
For any $F$ there is an $n_0=n_0(F)$ so that if $G$ is a graph on $n\ge n_0$ vertices and minimum degree at least $(1-1/\chi_{cr}(F))n$, then $G$ contains an $F$-packing that covers all but at most $5|F|^2$ vertices.
\end{theorem}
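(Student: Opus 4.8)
The plan is a two-phase argument: first invoke the regularity-method result that already gives an $F$-packing with only a sublinear deficiency, and then exploit the minimum degree hypothesis through a bounded-size switching procedure that drives the uncovered set down to a quantity depending only on $|F|$.

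\textbf{Phase 1: an almost perfect packing.} By the result of Koml\'os~\cite{Komlos} quoted above, the hypothesis $\delta(G)\ge (1-1/\chi_{cr}(F))n$ already guarantees an $F$-packing of $G$ covering all but at most $\e n$ vertices, for $n\ge n_0(\e,F)$. Fix $\e$ small in terms of $1/|F|$; since $1-1/\chi_{cr}(F)\ge \sigma(F)/|F|\ge 1/|F|$, every vertex of $G$ has $\Theta(n)$ neighbours. Among all $F$-packings of $G$ choose $\M$ to cover the maximum number of vertices, and set $W:=V(G)\setminus V(\M)$; then $|W|\le \e n$, and it suffices to prove $|W|\le 5|F|^2$.

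\textbf{Phase 2: bounded switching.} Suppose $|W|>5|F|^2$. The core step is to produce a \emph{switch}: a constant $t=t(F)$, vertices $w_1,\dots,w_{|F|}\in W$, and copies $C_1,\dots,C_t\in\M$ such that $Z:=\{w_1,\dots,w_{|F|}\}\cup C_1\cup\dots\cup C_t$ (a set of size $(t+1)|F|$, divisible by $|F|$) spans a perfect $F$-packing. Replacing $C_1,\dots,C_t$ in $\M$ by this packing of $Z$ yields an $F$-packing covering the $|F|$ extra vertices $w_1,\dots,w_{|F|}$, contradicting the maximality of $\M$. The minimum degree condition is what makes such a switch available once $|W|$ exceeds a constant of order $|F|^2$: although $|W|$ may be far smaller than $n$, the bulk of $G$ still lies in $V(\M)$, and each $w_i$ has $\Theta(n)$ neighbours, hence meets a positive proportion of the copies of $\M$; a pigeonhole and counting argument then extracts a bounded sub-collection $C_1,\dots,C_t$ which, together with a suitably chosen $|F|$-subset of $W$, is quantitatively rich enough that, after fixing a $\chi(F)$-colouring of $F$, a perfect $F$-packing of $G[Z]$ can be assembled greedily one copy at a time. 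Iterating the switch strictly decreases $|W|$, so the procedure terminates with $|W|\le 5|F|^2$, the exact constant being an artefact of the bookkeeping in this extraction. (Alternatively, one may run Koml\'os's regularity proof directly and view Phase 2 as the device for absorbing its exceptional set together with the $O(|F|)$ remainders left inside each regular $\chi(F)$-tuple.)

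\textbf{The main obstacle.} Phase 1 is a black box, so essentially all the work is in Phase 2: showing that a bounded-size switch exists as soon as the uncovered set exceeds a threshold of order $|F|^2$, uniformly over all $F$ — in particular for highly unbalanced $F$, where $(1-1/\chi_{cr}(F))n$ may be only a small constant times $n$. The delicate point is ensuring that the greedy construction of the perfect $F$-packing inside $G[Z]$ never gets stuck: one must choose the copies $C_1,\dots,C_t$ and the vertices $w_1,\dots,w_{|F|}$ so that, relative to the chosen colouring and embedding order, every vertex still has enough unused neighbours among the previously placed vertices of $Z$. Making this quantitative — and pushing the threshold down as far as $5|F|^2$ — is the crux of the argument.
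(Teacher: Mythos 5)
This statement is not proved in the paper at all: Theorem~\ref{thm:ShZh} is quoted verbatim from Shokoufandeh and Zhao~\cite{ShZh} and used as a black box (its only role here is to give $\delta(F,1,5|F|^2)\le 1-1/\chi_{cr}(F)$, which feeds into Theorem~\ref{genthm} and Theorem~\ref{thm:Ftil}). So there is no ``paper's own proof'' to compare against; what follows is an assessment of your proposal on its own terms.

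Your Phase~1 is fine and is essentially how one would start, but Phase~2, which you yourself flag as ``the crux,'' is not a proof: it is a description of what a proof would have to accomplish. The core claim --- that whenever $|W|>5|F|^2$ one can find $|F|$ vertices of $W$ and a bounded number of copies $C_1,\dots,C_t\in\M$ whose union spans a perfect $F$-packing --- is exactly the hard content of the Shokoufandeh--Zhao theorem, and nothing in your write-up establishes it. The proposed justification (each $w\in W$ has $\Theta(n)$ neighbours, hence meets a positive fraction of $\M$; pigeonhole gives a rich bounded subfamily; greedy embedding then succeeds) has a genuine hole at the last step. The minimum degree $(1-1/\chi_{cr}(F))n$ can be as small as roughly $\sigma(F)n$, i.e.\ a small constant times $n$, so a vertex of $W$ may be nonadjacent to most of $V(\M)$; worse, the degree condition gives no control over the \emph{induced} graph on a constant-sized set $Z=\{w_1,\dots,w_{|F|}\}\cup C_1\cup\dots\cup C_t$, and ``suitably rich'' is never made quantitative. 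It is entirely consistent with your counting that no choice of $t$ copies (for any bounded $t$) yields a perfect $F$-packing of $G[Z]$ --- you have to engineer $Z$ against a specific colouring of $F$ and show a legal embedding order exists, and that requires structural information (which Shokoufandeh--Zhao obtain via the regularity and blow-up machinery underneath Koml\'os's argument, not by switching around a maximal packing). In particular, for unbalanced $F$ with $\chi_{cr}(F)$ close to $\chi(F)-1$, the near-extremal complete $\chi(F)$-partite examples illustrate how delicate the leftover is: a naive local switch can easily be obstructed by the imbalance between colour classes. Until that embedding step inside $G[Z]$ is made precise --- and until you say where the specific bound $5|F|^2$ actually comes from, rather than calling it ``an artefact of the bookkeeping'' --- this is a plan, not a proof.
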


Then the question is, \emph{for which $F$ can we replace $\chi(F)$ with $\chi_{cr}(F)$ in Theorem~\ref{thm:KSS}?}
K\"{u}hn and Osthus \cite{KuOs06soda, KuOs09} answered this question completely.
To state their result, we need some definitions.
Write $k:=\chi(F)$.
Given a $k$-colouring $c$, let $x_1\le \cdots \le x_k$ denote the sizes of the colour classes of $c$ and put $D(c)=\{x_{i+1}-x_i \mid i\in [k-1]\}$.
Let $D(F)$ be the union of all the sets $D(c)$ taken over all $k$-colourings $c$.
Denote by $\hcf_{\chi}(F)$ the highest common factor of all integers in $D(F)$. (If $D(F)=\{0\}$, then set $\hcf_{\chi}(F):=\infty$.)
Write $\hcf_c(F)$ for the highest common factor of all the orders of components of $F$ (for example $\hcf_c(F)=|F|$ if $F$ is connected).
If $\chi(F)\neq 2$, then define $\hcf(F)=1$ if $\hcf_{\chi}(F)=1$.
If $\chi(F)= 2$, then define $\hcf(F)=1$ if both $\hcf_{c}(F)=1$ and $\hcf_{\chi}(F)\le 2$.
Then let
\[
\chi_*(F) = \begin{cases}
\chi_{cr}(F) &\text{ if } \hcf(F)=1,\\
\chi(F) &\text{ otherwise.}
\end{cases}
\]
In particular we have $\chi_{cr}(F)\le \chi_*(F)$.

\begin{theorem}[K\"{u}hn and Osthus \cite{KuOs06soda, KuOs09}]\label{thm:KO}
There exist integers $C=C(F)$ and $n_0=n_0(F)$ such that every graph $G$ whose order $n\ge n_0$ is divisible by $|F|$ and whose minimum degree is at least $(1-1/\chi_{*}(F))n+C$ contains a perfect $F$-packing.
\end{theorem}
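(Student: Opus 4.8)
The plan is to reduce the unbalanced case to the balanced one wherever the divisibility parameters of $F$ allow, and otherwise to run the usual Regularity-plus-Blow-up machinery, with the critical chromatic number controlling the approximate structure and the hypotheses on $\hcf_\chi(F)$ and $\hcf_c(F)$ serving only to destroy the parity-type obstructions that separate a near-perfect $F$-packing from a perfect one. First dispose of the easy case: if $\hcf(F)\neq 1$ then $\chi_*(F)=\chi(F)$ by definition, so the hypothesis $\delta(G)\ge(1-1/\chi(F))n+C$ is exactly that of Theorem~\ref{thm:KSS} and we are done. So assume $\hcf(F)=1$, whence $\chi_*(F)=\chi_{cr}(F)$ and $\delta(G)\ge(1-1/\chi_{cr}(F))n+C$; write $k:=\chi(F)$ and, for a $k$-colouring $c$ of $F$, let $x_1(c)\le\cdots\le x_k(c)$ denote its colour-class sizes.

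The technical heart is a divisibility lemma: when $\hcf(F)=1$ there is a constant $L_0=L_0(F)$ so that the complete $k$-partite graph $K(L_1,\dots,L_k)$ has a perfect $F$-packing whenever $\min_i L_i\ge L_0$, $\max_i L_i-\min_i L_i\le L_0$ and $|F|$ divides $\sum_i L_i$ — in particular the balanced $K(L,\dots,L)$ with $|F|\mid kL$. The mechanism is that a $k$-colouring $c$ of $F$ together with a bijection from its colour classes to the $k$ parts embeds a copy of $F$ consuming a prescribed permutation of $(x_1(c),\dots,x_k(c))$ vertices from the parts; forming non-negative integer combinations of these consumption vectors, over all $c$ and all bijections, one reaches every sufficiently balanced target of coordinate-sum divisible by $|F|$ precisely when the differences $x_{i+1}(c)-x_i(c)$ collectively generate $\mathbb{Z}$, i.e.\ when $\hcf_\chi(F)=1$. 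When $k=2$ one additionally uses $\hcf_c(F)=1$ to allow single components of $F$ in place of whole copies, which is what lets the two part-sizes be equalised. Turning this linear-algebra heuristic into an honest packing by vertex-disjoint copies inside $K(L_1,\dots,L_k)$ — in particular checking that the required target is balanced and reachable greedily — is the most delicate ingredient.

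With the lemma in hand, the non-extremal case follows the standard scheme. Apply Szemer\'edi's Regularity Lemma to $G$ to obtain equal-sized clusters and an exceptional set of size $\le\eps n$, and let $R$ be the reduced graph, so $\delta(R)$ is essentially $(1-1/\chi_{cr}(F))m$. Since $\chi_{cr}(F)$ is the threshold controlling the approximate (fractional) $F$-packing structure, $R$ carries a fractional $F$-packing covering almost all of its weight; making its weights rational with bounded denominators one partitions most of the clusters into bounded groups, each of which — after the exceptional vertices of $G$ have been absorbed into copies of $F$ using $\delta(G)$, and after the cluster sizes in the group are trimmed to a balanced size-tuple of total divisible by $|F|$ — is perfectly tiled by copies of $F$ in $G$ via the divisibility lemma together with the Blow-up Lemma of Koml\'os, S\'ark\"ozy and Szemer\'edi. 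This yields a perfect $F$-packing of $G$ whenever the degree hypothesis has a positive proportion of room to spare.

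The main obstacle is precisely that $\delta(G)$ exceeds the space-barrier value $(1-1/\chi_{cr}(F))n$ only by the additive constant $C$, so none of the $\eps n$ of room tacitly used above is actually available; this forces an extremal dichotomy. Either $G$ is $\alpha$-far (for a small $\alpha=\alpha(F)$) from the near-extremal configurations — essentially the unbalanced complete multipartite graphs that just fail to have a perfect $F$-packing — in which case the $\alpha n$ of slack makes the argument above go through; or $G$ is $\alpha$-close to such a configuration, in which case one reads off its almost-multipartite structure, places the $O(\alpha n)$ defect vertices into copies of $F$ directly using that each such vertex still has degree at least $(1-1/\chi_{cr}(F))n+C$ in $G$ and taking care not to overload any part, and then tiles the remaining clean multipartite blocks using the divisibility lemma — it is exactly here that $\hcf(F)=1$ is essential (when $\hcf(F)\neq 1$ an irreparable divisibility barrier appears and the threshold jumps to $1-1/\chi(F)$), and exactly here that the surplus $C$ pays for the bounded amount of local correction that no $\eps n$-argument can avoid. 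The bulk of the work lies in this extremal analysis and its interplay with the divisibility lemma; one finishes by choosing $C=C(F)$ large enough to absorb the error terms in both regimes. (When $k=2$ the same strategy applies, with the extra bookkeeping, in the divisibility lemma and the extremal case, that no component of $F$ is too large to be placed — which again follows from $\delta(G)\ge n/2$ up to the bounded correction.)
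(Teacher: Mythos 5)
Theorem~\ref{thm:KO} is a quoted result, cited from \cite{KuOs06soda, KuOs09}; the present paper contains no proof of it, so there is no ``paper's own proof'' to compare against. Judged on its own terms and against the original K\"uhn--Osthus argument, your sketch gets the skeleton right: for $\hcf(F)\neq 1$ the claim is subsumed by Theorem~\ref{thm:KSS}; for $\hcf(F)=1$ one runs Regularity plus the Blow-up Lemma in the non-extremal regime and a bespoke analysis near the complete-multipartite bottleneck configurations in the extremal regime, with a divisibility lemma for tiling approximately balanced complete $k$-partite graphs acting as the hinge between the linear-algebraic information encoded in $\hcf(F)$ and actual vertex-disjoint copies of $F$. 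This is indeed the architecture of the K\"uhn--Osthus proof.

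However, there are two points to flag. First, a small but real imprecision: when $k=2$ the definition of $\hcf(F)=1$ requires $\hcf_c(F)=1$ \emph{and} $\hcf_\chi(F)\le 2$, not $\hcf_\chi(F)=1$; your divisibility lemma as phrased (``reaches every sufficiently balanced target $\dots$ precisely when $\hcf_\chi(F)=1$'') would exclude legitimate bipartite $F$ with $\hcf_\chi(F)=2$, and the patch (using individual components to shift a single vertex between the two parts, which is exactly where $\hcf_c(F)=1$ enters) needs to be stated as carrying the case $\hcf_\chi(F)=2$ as well. Second, and more substantively, this is a program rather than a proof: the extremal analysis, which you correctly identify as the bulk of the work, is not carried out. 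Specifically, you do not say what the near-extremal configurations actually are, why the degree surplus of only an additive constant $C$ over the space barrier $(1-1/\chi_{cr}(F))n$ is enough to repair a linear-size defect, or how the absorption of exceptional vertices is done without pushing the surviving cluster sizes outside the window in which the divisibility lemma applies. These are precisely the places where the K\"uhn--Osthus proof expends its effort, and the sketch gestures at them without closing them. So the approach is sound, but as written it is an outline of the original proof rather than an independent derivation.
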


Theorem~\ref{thm:KO} is best possible in the sense that the degree condition cannot be lowered up to the constant $C$ (there are also graphs $F$ such that the constant cannot be omitted entirely).
Moreover, this also implies that, \emph{one can replace $\chi(F)$ with $\chi_{cr}(F)$ in Theorem~\ref{thm:KSS} if and only if $\hcf(F)=1$}.
When $\hcf(F)\not =1$ certain \emph{divisibility barrier} constructions show that the minimum degree condition in Theorem~\ref{thm:KO} (and thus Theorem~\ref{thm:KSS}) is best possible up to the additive constant $C$ (see \cite{KuOs09}). On the other hand, the following \emph{space barrier} construction shows that one cannot replace $\chi_{*}(F)$ with anything smaller than $\chi_{cr}(F)$  in Theorem~\ref{thm:KO}; that is, when $\hcf(F)\not =1$,
Theorem~\ref{thm:KO}  is best possible up to the additive constant $C$: Let $G$ be the complete $\chi(F)$-partite graph on $n$ vertices with $\sigma(F)n/|F|-1$ vertices in one vertex class, and the other vertex classes of sizes as equal as possible. Then $\delta (G) = (1-1/\chi_{cr}(F))n-1$ and $G$ does not contain a perfect $F$-packing.

Now let us return to the algorithmic aspect of this problem.
Let $\pack(F, \delta)$ be the decision problem of determining whether a graph $G$ whose minimum degree is at least $\delta |G|$ contains a perfect $F$-packing.
When $F$ contains a component of size at least $3$, the result of Hell and Kirkpatrick \cite{HeKi} shows that $\pack(F,0)$ is NP-complete.
In contrast, Theorem~\ref{thm:KO} gives that $\pack(F, \delta)$ is (trivially) in P for any $\delta\in (1-1/\chi_{*}(F),1]$.
In~\cite{KuOs06soda}, K\"{u}hn and Osthus showed that $\pack(F, \delta)$ is NP-complete for any $\delta\in [0, 1-1/\chi_{cr}(F))$ if $F$ is a clique of size at least $3$ or a complete $k$-partite graph such that $k\ge 2$ and the size of the second smallest vertex class is at least $2$.

Due to lack of knowledge on the range $\delta\in [0, 1-1/\chi_{*}(F))$ for general $F$, we still do not understand $\pack(F, \delta)$ well in general.
Indeed, even for (unbalanced) complete multi-partite graphs $F$ with $\hcf(F)\neq 1$, there is a substantial hardness gap for $\delta\in [1-1/\chi_{cr}(F), 1-1/\chi_{*}(F)]$.
In particular, Yuster asked the following question in his survey \cite{Yuster07}.

\begin{problem}[Yuster \cite{Yuster07}]\label{prob}
Is it true that $\mathbf{Pack}(F, \delta)$ is NP-complete for all $\delta\in [0, 1-1/\chi_{*}(F))$ and any $F$ which contains a component of size at least $3$?
\end{problem}

Our next result provides an algorithm showing that $\mathbf{Pack}(F, \delta)$ is in P when $\delta\in (1-1/\chi_{cr}(F),1]$, which  gives a negative answer to Problem~\ref{prob} (as seen for any $F$ such that $\chi_{cr}(F)<\chi_*(F)$).
In fact, this gives the first \emph{nontrivial} polynomial-time algorithm for the decision problem $\pack(F, \delta)$.
In particular, it eliminates the aforementioned hardness gap for unbalanced complete multi-partite graphs $F$ with $\hcf(F)\neq 1$ almost entirely.

\begin{theorem}\label{thm:Ftil}
For any $m$-vertex $k$-chromatic graph $F$ and $\delta\in (1-1/\chi_{cr}(F),1]$, $\mathbf{Pack}(F, \delta)$ is in $P$. 
That is, for every $n$-vertex graph $G$ with minimum degree at least $\delta n$, there is an algorithm with running time $O(n^{\max\{2^{m^{k-1}-1}m+1, \,m(2m-1)^m\}})$, which determines whether $G$ contains a perfect $F$-packing.
\end{theorem}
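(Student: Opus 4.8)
The plan is to derive Theorem~\ref{thm:Ftil} as an application of the general tool Theorem~\ref{genthm}. First I would unpack what Theorem~\ref{genthm} requires: some notion of a `divisibility structure' or lattice together with the hypothesis that the host (hyper)graph $G$ admits an \emph{almost} perfect $F$-packing, i.e. one covering all but a bounded number of vertices. The key input here is Theorem~\ref{thm:ShZh} of Shokoufandeh and Zhao: since $\delta(G)\ge \delta n > (1-1/\chi_{cr}(F))n$, the graph $G$ has an $F$-packing covering all but at most $5m^2$ vertices. This is exactly the ``last obstructions'' scenario described in the introduction, so the strategy is to show that the obstruction to promoting an almost-perfect packing to a perfect one is governed by a divisibility (lattice/parity) condition that Theorem~\ref{genthm} can detect in polynomial time.

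Concretely, I would proceed as follows. (1) Fix the small positive constants coming from Theorem~\ref{genthm} in terms of $\delta - (1-1/\chi_{cr}(F))$ and $m$. (2) Verify the hypotheses of Theorem~\ref{genthm}: the minimum-degree condition, the existence of the almost-perfect $F$-packing (via Theorem~\ref{thm:ShZh}), and whatever ``robust'' or ``absorbing-type'' supersaturation property Theorem~\ref{genthm} asks for — this should follow because a minimum degree above the critical-chromatic threshold gives not just one almost-perfect packing but many, and copies of $F$ can be locally exchanged. (3) Apply Theorem~\ref{genthm} to conclude that whether $G$ has a perfect $F$-packing is equivalent to a finite, efficiently-checkable condition on a bounded-size ``reduced'' structure, and that this can be tested in time polynomial in $n$. (4) Finally, extract the explicit running time $O(n^{\max\{2^{m^{k-1}-1}m+1,\,m(2m-1)^m\}})$ by bookkeeping the two dominant subroutines: the exponents $2^{m^{k-1}-1}m+1$ and $m(2m-1)^m$ should come respectively from enumerating the possible ``index/colour vectors'' of copies of $F$ (there are roughly $m^{k-1}$ relevant coordinates, and a power-set over these) and from the brute-force search over configurations of a bounded collection of copies of $F$ on $O(m)$ vertices each.

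The main obstacle I expect is step (2): checking, in the precise language of Theorem~\ref{genthm}, that $F$-packings in graphs of this density satisfy the structural ``flexibility''/lattice hypotheses. In particular one must argue that the relevant lattice of achievable ``deficiency vectors'' (the analogue, for general $F$, of the parity obstruction for perfect matchings) is generated by local moves, so that the global question reduces to a bounded computation — and one must handle the unbalanced case, where $\chi_{cr}(F)<\chi(F)$ and the colour-class sizes $\sigma(F),\dots$ enter, carefully, since this is exactly the regime giving the negative answer to Problem~\ref{prob}. A secondary technical point is making the divisibility bookkeeping uniform over all $\chi(F)$-colourings of $F$ (the sets $D(c)$ and $\hcf_\chi(F)$ above), so that the algorithm need only inspect a bounded-size certificate. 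Once the hypotheses of Theorem~\ref{genthm} are in place the remainder is routine: invoke the black box, then count operations to read off the stated polynomial bound.
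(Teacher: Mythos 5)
Your skeleton is right: apply Theorem~\ref{genthm}, use Theorem~\ref{thm:ShZh} to verify the almost-perfect-packing hypothesis (condition~(i)), and then handle the lattice/coset condition. You also correctly isolate the crux as ``showing the relevant lattice is generated by local moves.'' But you don't resolve that crux, and there is a second hypothesis of Theorem~\ref{genthm} that your sketch doesn't engage with at all.

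The gap you overlook is condition~(ii): you must exhibit an $(F,\beta,t,c)$-good partition $\cP$ of $V(G)$, i.e.\ a partition into linearly-many-sized classes each of which is $(F,\beta,t)$-closed under reachability. The paper does this by first proving Proposition~\ref{prop:Nv} (every vertex has $(1/m+\gamma/2)n$ $(F,\alpha,1)$-reachable neighbours, via the supersaturation Lemma~\ref{agood}), then checking that any $m^{k-1}+1$ vertices contain a reachable pair (because each $W(u)$ is dense), and then invoking Lemma~\ref{lem:P} with $c=m^{k-1}$. This is exactly where the exponent $2^{m^{k-1}-1}m+1$ comes from — it is the running time of the partitioning algorithm of Lemma~\ref{lem:P}, not (as you suggest) an enumeration of index vectors over a power set. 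Similarly $m(2m-1)^m$ is the cost of brute-forcing $q$-solubility for $q=(2m-1)^r\le(2m-1)^m$, where $q$ is the bound on $|Q|$, not a ``configuration count over $O(m)$ vertices'' per se.

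The crux you flag — bounding $|Q(\cP, L^\mu_{\cP,F}(G))|$ — is Proposition~\ref{prop:group}, and the argument is not the one you gesture at. You invoke $D(c)$, $\hcf_\chi(F)$ and the K\"uhn--Osthus machinery, but the proof uses none of that; it is a direct argument from unbalancedness. Fix a $k$-colouring of $F$ with colour-class sizes $a_1<a_k$ and set $a:=a_k-a_1$. Build an auxiliary graph $P$ on the parts of $\cP$ with an edge $ij$ whenever $e(G[V_i,V_j])\ge\gamma n^2$. For any such edge, supersaturation (Proposition~\ref{prop:erdos}) produces many copies of $K^{(2)}(a_1,\dots,a_k)$ with two different assignments of the smallest/largest colour class to $V_i,V_j$; subtracting their index vectors gives $a(\bfu_i-\bfu_j)\in L$. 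For $k\ge3$ the degree condition forces $P$ to be connected, so all such transfer vectors are available and every residue class has a representative with coordinates in $[-(m-1),m-1]$; for $k=2$ one handles isolated vertices of $P$ using $m\bfu_i\in L$. None of this appears in your sketch, and without it condition~(iii) of Theorem~\ref{genthm} is unverified. Finally, the balanced case does not need ``careful handling'': the paper simply observes that when $\chi_{cr}(F)=\chi(F)$ the statement is already covered by Theorem~\ref{thm:AY}, so one may assume $F$ unbalanced from the start, which is precisely what makes $a>0$.
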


In view of the aforementioned result of \cite{KuOs06soda}, Theorem~\ref{thm:Ftil} is asymptotically best possible if $F$ is a complete $k$-partite graph such that $k\ge 2$ and the size of the second smallest cluster is at least $2$ (note that when $F$ is balanced, the result is included in Theorem~\ref{thm:AY}).
On the other hand, Theorem~\ref{thm:Ftil} complements Theorem~\ref{thm:ShZh} in the sense that when the minimum degree condition guarantees an $F$-packing that covers all but constant number of vertices, we can detect the `last obstructions' efficiently.

We remark that Theorem~\ref{thm:Ftil} also appears in a conference paper of the first author~\cite{jieconf}.

\subsection{Perfect packings in hypergraphs} Over the last few years there has been an interest in obtaining degree conditions that force a perfect $F$-packing in $k$-graphs where $k \geq 3$. 
In general though, this appears to be a harder problem than the graph version. Indeed, far less is known in the hypergraph case. See a survey of Zhao~\cite{zsurvey} for an overview of the known results in the area.
Our final application of Theorem~\ref{genthm} is related to a recent general result of Mycroft~\cite{Mycroft}.

Given a $k$-graph $F$ and an integer $n$ divisible by $|F|$, we define the threshold ${\delta(n,F)}$ 
 as the smallest integer $t$ such that every $n$-vertex $k$-graph $H$ with $\delta_{k-1}(H)\ge t$ contains a perfect $F$-packing.
Let $F$ be a $k$-partite $k$-graph on vertex set $U$ with at least one edge.
Then a \emph{$k$-partite realisation} of $F$ is a partition of $U$ into vertex classes $U_1,\dots, U_k$ so that for any $e\in E( F)$ and $1\le j\le k$ we have $|e\cap U_j|=1$. 
Define
\[
\mathcal{S}(F):= \bigcup_{\chi} \{|U_1|,\dots, |U_k|\} \text{ and } \mathcal{D}(F):=\bigcup_{\chi} \{||U_i| - |U_j||: i, j\in [k]\},
\]
where in each case the union is taken over all $k$-partite realisations $\chi$ of $F$ into vertex classes $U_1,\dots, U_k$ of $F$.
Then $\gcd(F)$ is defined to be the greatest common divisor of the set $\mathcal{D}(F)$ (if $\mathcal{D}(F)=\{0\}$ then $\gcd(F)$ is undefined).
We also define
\[
\sigma(F):= \frac{\min_{S\in \mathcal{S}(F)}S}{|V(F)|},
\]
and thus in particular, $\sigma(F)\le 1/k$.
Mycroft~\cite{Mycroft} proved the following:
\begin{equation} \label{eq:nk1}
\delta(n, F) \le \left\{\begin{array}{ll}
{n}/{2} +o(n) & \text{if } \mathcal{S}(F)=\{1\} \text{ or } \gcd(\mathcal{S}(F))>1;\\
\sigma(F)n +o(n)  & \text{if } \gcd(F)=1;\\
\max \{\sigma(F)n, n/p\}+o(n) & \text{if } \gcd(\mathcal{S}(F))=1 \text{ and } \gcd(F)=d>1, 
\end{array} \right.\end{equation} 
where $p$ is the smallest prime factor of $d$.
Moreover, equality holds in~\eqref{eq:nk1} for all complete $k$-partite $k$-graphs $F$, as well as a wide class of other $k$-partite $k$-graphs.

Mycroft~\cite{Mycroft} also showed that minimum codegree of at least $\sigma(F)n +o(n)$ in an $n$-vertex $k$-graph $H$  ensures an  $F$-packing covering all but a constant number of vertices.
The next two results show that above this degree threshold, one can determine in polynomial time whether $H$ contains a perfect $F$-packing, whilst below the threshold the problem is NP-complete (for complete $k$-partite $k$-graphs $F$).
Given $\delta >0$ and a $k$-graph $F$, let $\pack(F, \delta)$ be the decision problem of determining whether a $k$-graph $H$ whose minimum codegree is at least $\delta |H|$ contains a perfect $F$-packing.

\begin{theorem}\label{prop:reduc2}
Let $k\ge 3$ be an integer and let $F$ be a complete $k$-partite $k$-graph.
Then $\mathbf{Pack}(F, \delta)$ is NP-complete for any $\delta\in [0, \sigma (F))$. 
\end{theorem}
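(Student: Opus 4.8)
Membership in NP is immediate: a perfect $F$-packing of an $n$-vertex $k$-graph is a list of $n/|V(F)|$ copies of $F$, hence $O(n)$ data, and checking that it is indeed a perfect $F$-packing takes polynomial time. The substance of the theorem is NP-hardness, which I would prove by a polynomial-time reduction from the perfect matching problem for $k$-graphs, which is NP-complete for $k\ge 3$~\cite{karp, garey}. Fix $\delta\in[0,\sigma(F))$ and let $G$ be an arbitrary $k$-graph on $n_0$ vertices (after a trivial modification, adjoining a small clique of new vertices, we may assume $|V(F)|$ divides $n_0$). The plan is to build, in polynomial time, a $k$-graph $H^*$ on $N$ vertices with $\delta_{k-1}(H^*)\ge \delta N$ such that $H^*$ has a perfect $F$-packing if and only if $G$ has a perfect matching.

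First I would build a \emph{core} $\mathcal{C}$: a blow-up of $G$ in which each vertex of $G$ is replaced by a cluster of $\sigma(F)|V(F)|$ vertices (note that this integer is the size of a smallest vertex class of $F$), together with additional ``filler'' vertices for the larger vertex classes, the edge set of $\mathcal{C}$ being arranged so that the copies of $F$ contained in $\mathcal{C}$ are exactly those coming from the edges of $G$, and so that perfect $F$-packings of $\mathcal{C}$ correspond precisely to perfect matchings of $G$. (If $F$ is a single edge, $\mathcal{C}=G$.) Then I would adjoin a \emph{space-barrier pad}: a set $B$ of new vertices, of size roughly $\tfrac{\delta}{1-\delta}|V(\mathcal{C})|$ and divisible by $|V(F)|$, together with a carefully chosen collection of edges meeting $B$. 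These edges should be chosen so that (i) every $(k-1)$-set of $H^*$ lies in at least $\delta N$ edges, so $H^*$ is a genuine instance of $\pack(F,\delta)$; and (ii) as in Mycroft's space-barrier construction for complete $k$-partite $F$, every copy of $F$ in $H^*$ that meets $B$ must contain an entire vertex class of $F$ inside $B$, hence uses at least $\sigma(F)|V(F)|$ vertices of $B$. Since $\delta<\sigma(F)$, the size of $B$ satisfies $|B|<\sigma(F)N$ up to lower-order terms.

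The ``if'' direction is routine: a perfect matching of $G$ gives an $F$-packing of $\mathcal{C}$ covering all of $\mathcal{C}$, which extends to a perfect $F$-packing of $H^*$ by partitioning $B$ into copies of $F$. For the ``only if'' direction, suppose $H^*$ has a perfect $F$-packing $\mathcal{F}$, of size $N/|V(F)|$. By (ii) the number of members of $\mathcal{F}$ meeting $B$ is at most $|B|/\bigl(\sigma(F)|V(F)|\bigr)<N/|V(F)|$, so not all of them meet $B$; combining this bound with the divisibility constraints imposed by the block structure of $H^*$, I would argue that every member of $\mathcal{F}$ meeting $B$ in fact lies entirely inside $B$. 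Then $\mathcal{F}$ restricts to a perfect $F$-packing of $\mathcal{C}$, and so $G$ has a perfect matching.

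The hard part is the last step, and more precisely property (ii) together with making the reduction tight. One must rule out copies of $F$ that straddle $\mathcal{C}$ and $B$ while using \emph{fewer than} $\sigma(F)|V(F)|$ vertices of $B$: such a copy would let the packing waste vertices of $\mathcal{C}$ on non-core copies and destroy the correspondence with matchings of $G$. (In particular the naive pad in which one simply declares every $k$-set meeting $B$ to be an edge does \emph{not} work, precisely because cheap straddling copies then exist.) Avoiding this forces a delicate choice of the edge set of $\mathcal{C}$, of the edges between $\mathcal{C}$ and $B$, of the cluster and filler-pool sizes, and of $|B|$, so that the codegree bound (i) still holds, the space-barrier property (ii) survives, and all divisibility constraints line up; it is here that the complete $k$-partite structure of $F$ and the hypothesis $\delta<\sigma(F)$ are genuinely used, via the space-barrier inequality. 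I expect this simultaneous bookkeeping, rather than any single conceptual step, to be the main technical burden of the proof.
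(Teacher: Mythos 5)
Your high-level plan (reduce from perfect matching, build a core whose $F$-packings encode matchings, and attach a space-barrier pad to force the codegree up to $\delta N$) is the right shape, and your pad discussion is roughly consistent with what the paper does in its final reduction: the paper's pad is a bipartite set $A\cup B$ with $|A|:|B|=a_1:(m-a_1)$, all $k$-sets meeting $A$ declared edges, codegree $|A|\approx\delta N$, and a ratio argument showing that any copy of $K$ straddling $A$ and the core starves $B$. (So the ``naive'' pad is in fact fine once you give it this two-part structure, with the small part $A$ playing the role of the part of size $a_1$ in the space barrier.)

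The genuine gap is in your core construction $\mathcal C$, and it is not bookkeeping. You propose a vertex blow-up of $G$ (clusters of size $a_1$) plus ``filler'' vertices for the larger colour classes, with the stated goal that copies of $F$ in $\mathcal C$ correspond exactly to edges of $G$ and perfect $F$-packings of $\mathcal C$ correspond exactly to perfect matchings of $G$. These two requirements pull against each other. If the fillers are attached per edge of $G$, then a perfect $F$-packing of $\mathcal C$ must cover every edge's fillers and hence encodes an edge cover, not a matching. If instead the fillers form a shared pool, then (since you start from an arbitrary $k$-graph, which may have edges sharing $\ge 2$ vertices) it is no longer clear that every copy of $F$ arises from a single edge of $G$, and spurious copies spanning several edges plus pool vertices can appear. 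The paper sidesteps precisely this problem with a different decomposition: it reduces from $\PM_{lin}(3)$, bootstraps to $\PM_{lin}(m)$ (perfect matching in \emph{linear} $m$-uniform hypergraphs) by a vertex-gadget construction, and then replaces each $m$-edge of a linear $m$-graph by a copy of $K$ \emph{on the same $m$ vertices} — so no new vertices and no fillers appear at that step. Linearity, together with the fact that the edges of a complete $k$-partite $K$ can be ordered so that each shares $\ge 2$ vertices with an earlier one, is exactly what guarantees the one-to-one correspondence between copies of $K$ and edges of the linear $m$-graph. That reduction chain $\PM_{lin}(3)\le\PM_{lin}(m)\le\pack(K,0)\le\pack(K,\delta)$ is the missing key idea; without it (or some substitute for the linearity trick), the core step of your argument does not close.
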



\begin{theorem}\label{thm:Ktil}
Let $k\ge 3$ be an integer and let $F$ be an $m$-vertex $k$-partite $k$-graph.
For any $\delta\in (\sigma (F),1]$, $\mathbf{Pack}(F, \delta)$ is in $P$. 
That is, for every $n$-vertex $k$-graph $H$ with $\delta_{k-1}(H)\ge \delta n$, there is an algorithm with running time $O(n^{m(2m-1)^m})$, which determines whether $H$ contains a perfect $F$-packing.
\end{theorem}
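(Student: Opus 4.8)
The plan is to derive Theorem~\ref{thm:Ktil} as a direct application of the general tool, Theorem~\ref{genthm}, so the bulk of the work is checking that the hypotheses of that theorem are met by the class of $n$-vertex $k$-graphs with $\delta_{k-1}(H) \ge \delta n$ for a fixed $\delta > \sigma(F)$. First I would recall Mycroft's result that this codegree condition guarantees an \emph{almost} perfect $F$-packing: there is a constant $C_0 = C_0(F)$ such that every such $H$ contains an $F$-packing covering all but at most $C_0$ vertices. This is exactly the `almost perfect packing' ingredient that Theorem~\ref{genthm} needs: it reduces the question of a perfect $F$-packing to detecting the bounded-size obstruction on the leftover set, together with the global divisibility/structural obstructions that Theorem~\ref{genthm} is designed to handle.

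Next I would verify the remaining hypotheses of Theorem~\ref{genthm}. The key point is that the minimum codegree condition $\delta_{k-1}(H) \ge \delta n$ with $\delta > \sigma(F)$ is \emph{robust} under deleting a bounded number of vertices and is \emph{monotone} in the right sense, so that the class is closed under the operations Theorem~\ref{genthm} performs (passing to induced subgraphs on all but a constant number of vertices, adjusting for a partial packing already removed, etc.). One also checks the `supersaturation'/`space barrier' side: because $\delta > \sigma(F)$ strictly, there is room to absorb the $O(1)$ leftover vertices into an existing copy structure, or to certify that no such absorption exists, in time polynomial in $n$ — the exhaustive search over candidate leftover sets of size $O(1)$ and over the constantly many ways to reconfigure the packing near them costs only $n^{O(m)}$. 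Assembling these, Theorem~\ref{genthm} yields an algorithm, and I would then extract the running time: the dominant cost is the enumeration underlying the general tool, which for a target $k$-graph $F$ on $m$ vertices is $O(n^{m(2m-1)^m})$, matching the bound claimed in the statement.

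The main obstacle, I expect, is not any single estimate but making precise the interface with Theorem~\ref{genthm}: one must confirm that the notion of `divisibility barrier' / lattice-based obstruction built into that theorem is the \emph{only} obstruction remaining once an almost perfect packing is in hand, in the hypergraph setting and for a general (not necessarily complete) $k$-partite $F$. Concretely, the subtle step is showing that the relevant transferral/index lattice of $F$-copies in $H$ can be computed — or its triviality certified — in polynomial time, and that when it is `full' the almost perfect packing can always be completed to a perfect one. Mycroft's structural analysis of $F$-packings under codegree conditions, phrased in terms of $\mathcal{S}(F)$, $\mathcal{D}(F)$ and $\gcd(F)$, is exactly what bridges this gap, so I would lean on it to show the leftover configuration lies in a bounded family and that membership in that family is decidable in the stated time. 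Everything else — the closure properties of the class, the bookkeeping of constants, and the final running-time computation — is routine once this bridge is in place.
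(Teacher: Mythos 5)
Your high-level plan — use Mycroft's almost-perfect-packing theorem to supply condition (i) of Theorem~\ref{genthm} and then invoke that theorem — is the right starting point, but the proposal misidentifies what the remaining hypotheses actually are and leaves the two hard ones unaddressed. Theorem~\ref{genthm} needs, besides the degree condition, (ii) an explicit $(F,\beta,t,c)$-good partition $\mathcal{P}$ of $V(H)$ into reachability classes, and (iii) a concrete bound $q$ on the coset group $|Q(\mathcal{P}, L^{\mu}_{\mathcal{P},F}(H))|$. Your ``robustness'' and ``closure properties of the class'' discussion does not produce either; neither is a hypothesis of Theorem~\ref{genthm}. Constructing $\mathcal{P}$ is genuine work: one has to show, via the codegree condition and Lo--Markstr\"{o}m's reachability lemma (Lemma~\ref{LM4.2}), that every vertex $v$ has $|\tilde{N}_{F,\alpha,1}(v,H)| \geq (1/m + \gamma/2)n$ and that any $m+1$ vertices contain a reachable pair, and then invoke Lemma~\ref{lem:P} to obtain a partition with at most $m$ parts, each $(F,\beta,2^{m-1})$-closed. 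None of this appears in the proposal.

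The more serious issue is your plan for (iii). You propose to ``lean on Mycroft's structural analysis\dots phrased in terms of $\mathcal{S}(F)$, $\mathcal{D}(F)$ and $\gcd(F)$'' to bound the lattice obstruction. This is the wrong tool: those invariants describe divisibility phenomena with respect to $k$-partite realisations of $V(F)$ abstractly, not with respect to the specific, algorithmically constructed partition $\mathcal{P}$ of $V(H)$ that Theorem~\ref{genthm} requires, and they do not directly yield a bound of the form $|Q|\leq q$. What is actually needed is a direct counting argument (Proposition~\ref{prop:group1} in the paper): the codegree condition forces, for each part $V_i$, many edges with $k-1$ vertices in $V_i$; averaging and supersaturation (Erd\H{o}s) then give $\geq \mu n^m$ copies of $K(F)$ with index vector $(m-a)\bfu_i + a\bfu_{j_i}$ for some $j_i$, so these vectors lie in $L^{\mu}_{\mathcal{P},F}(H)$. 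Since $m-2a>0$, a simple reduction using these vectors shows every coset of $L_{\max}^r/L$ has a representative with all coordinates in $(-m,m)$, giving $|Q|\leq (2m-1)^r$. Without a concrete bound on $q$ you have neither a well-defined solubility test nor the claimed running time $O(n^{m(2m-1)^m})$ (which arises precisely from brute-forcing all $F$-packings of size at most $(2m-1)^m$).
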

Note that when $F$ is just an edge, a perfect $F$-packing is simply a perfect matching. Further, in this case $\sigma (F)=1/k$.
Thus, Theorem~\ref{thm:Ktil} is a generalisation of the perfect matching result of Keevash, Knox and Mycroft~\cite{KKM13}.

\subsection{A general tool for complexity results} To prove the results mentioned above, we introduce a general structural theorem, Theorem~\ref{genthm}. 
Given any $k$-graph $F$, Theorem~\ref{genthm} considers $k$-graphs $H$ whose minimum $\ell$-degree is sufficiently large so as to ensure $H$ contains an \emph{almost} perfect $F$-packing (that is an $F$-packing covering all but a constant number of vertices in $H$). To state Theorem~\ref{genthm} we introduce a coset group which, loosely speaking, is defined with respect to the `distribution' of copies of $F$ in $H$.
In particular, Theorem~\ref{genthm} states that if this coset group $Q$ has bounded size then we have a necessary and sufficient condition for $H$ containing a perfect $F$-packing. 
This condition can be easily checked in polynomial time. This means if we have a class of $k$-graphs $H$ (i) each of whose minimum $\ell$-degree is sufficiently large and; (ii) each such $H$ has a corresponding coset group $Q$ of bounded size, then we can determine in polynomial time whether an element $H$ in this class has a perfect $F$-packing.

Thus, in applications of Theorem~\ref{genthm} the key goal is to determine whether the corresponding coset groups have bounded size. In our applications to Theorems~\ref{thm:Ftil} and~\ref{thm:Ktil} all $k$-graphs $H$ considered will have a corresponding coset group $Q$  of bounded size. On the other hand, to prove Theorem~\ref{mainthm} we show that a hypergraph $H$ under consideration must have  a corresponding coset group $Q$  of bounded size, or failing that, \emph{must} have a perfect matching. 

The approach of using these auxiliary coset groups as a tool for such complexity results was also used in~\cite{KKM13, Han14_poly}; note that these applications were for perfect matchings in hypergraphs of large minimum codegree. Theorem~\ref{genthm} provides a generalisation of this approach. Indeed, Theorem~\ref{genthm} is applicable to perfect matching \emph{and} packing problems in  (hyper)graphs of large minimum $\ell$-degree for \emph{any} $\ell$. As such, we suspect Theorem~\ref{genthm} could have many more applications in the area.


The paper is organised as follows. In the next section we prove Theorem~\ref{prop:reduc2}. In Section~\ref{sec3} we introduce the general structural theorem (Theorem~\ref{genthm}) as well as some notation and definitions.
We prove Theorem~\ref{genthm} in Sections~\ref{secabs} and~\ref{secgen}. In Sections~\ref{sec6} and~\ref{sec7} we introduce some tools that are useful for the applications of Theorem~\ref{genthm}.
We then prove Theorems~\ref{mainthm},~\ref{thm:Ftil} and~\ref{thm:Ktil} in Sections~\ref{sec8}, \ref{sec9} and~\ref{sec10} respectively.


\section{Proof of the  hardness result}


In this section we prove Theorem~\ref{prop:reduc2}.

\begin{proof}[Proof of Theorem~\ref{prop:reduc2}]
Our proof resembles the one of Szyma\'nska~\cite[Theorem 1.7]{Szy13} and we also use the following result from it.
Let $\PM_{lin}(k)$ be the subproblem of $\PM(k, 0)$ restricted to $k$-uniform hypergraphs which are \emph{linear}, that is, any two edges share at most one vertex.
Then it is shown in \cite{Szy13} that $\PM_{lin}(3)$ is NP-complete.

Let $K:=K^{(k)}(a_1, \dots, a_k)$ be the complete $k$-partite $k$-graph of order $m$ with vertex classes of size $a_1\le \cdots\le a_k$.
We may assume that $a_k\ge 2$ as otherwise $K$ is just a single edge and $\pack(K, \delta)$ is NP-complete for $\delta\in [0,1/k)$ as shown in \cite{Szy13}.
We prove the theorem by the following reductions.
\[
\PM_{lin}(3)\overset{(a)}{\le} \PM_{lin}(m) \overset{(b)}{\le} \pack(K, 0)\overset{(c)}{\le} \pack(K, \delta).
\]

\noindent\emph{Reduction} (a).
In fact, we will show that $\PM_{lin}(k){\le} \PM_{lin}(k+1)$ for any $k\ge 3$.
Let $H$ be a linear $k$-graph with $n$ vertices and $s$ edges.
We construct a linear $(k+1)$-graph $G$ by taking $k+1$ disjoint copies $H_i$ of $H$, $i\in [k+1]$ and for every edge $e$ in each copy $H_i$ we add one vertex $v_i^e$ to $V(G)$, i.e., $V(G)=\bigcup_{i\in [k+1]} (V(H_i)\cup \bigcup_{e\in E(H_i)}\{v_i^e\})$.
Thus $|V(G)|=(k+1)(n+s)$.
For every $e\in E(H)$ the $(k+1)$-tuple $\{v_i^e: i\in[k+1]\}$ forms an edge of $G$.
Moreover, we add to $E(G)$ all sets of the form $e\cup \{v_i^e\}$ for all $i\in [k+1]$ and $e\in E(H_i)$.
Hence, $G$ has $(k+2)s$ edges and is linear by the definition.

Suppose $H$ has a perfect matching $M$.
Let $M_i$ be the same matching in the copy $H_i$ of $H$, $i\in [k+1]$.
Then it is easy to see that $G$ has a perfect matching $M'=\{e\cup \{v_i^e\}, e\in M_i, i\in [k+1]\}\cup \{f_e=\{v_1^e,\dots, v_{k+1}^e\}: e\notin M\}$.
On the other hand assume that $G$ has a perfect matching $M'=\{f_1,\dots, f_{n+s}\}$.
For all $v\in V(H_1)$, let $f(v)$ be such that $f(v)\in M'$ and $v\in f(v)$.
But the only edges of $G$ containing the vertices of $H_1$ are of the form $e\cup \{v_1^e\}$, so $|\{f(v):v\in V(H_1)\}|=n/k$ and $\{f(v)\cap V(H_1): v\in V(H_1)\}$ is a perfect matching of $H_1$.
Therefore $H$ also has a perfect matching.

\medskip
\noindent\emph{Reduction} (b).
Given a linear $m$-graph $H$ we build a $k$-graph $G$ by replacing each edge of $H$ with a copy of $K$.
If $H$ has a perfect matching then $G$ has a perfect $K$-packing.
In turn, if $G$ has a perfect $K$-packing, then by the linearity of $H$, each copy of $K$ corresponds to a single edge of $H$ and therefore the $K$-packing corresponds to a perfect matching of $H$.
In fact, since $K$ is complete $k$-partite, there exists an ordering $e_1,\dots, e_t$ of $E(K)$ (e.g., the lexicographic ordering) such that for any $2\le i\le t$, there exists $1\le j\le i-1$ such that $|e_i\cap e_{j}|\ge 2$.
Then by the linearity of $H$, each copy of $K$ corresponds to a single edge of $H$.

\medskip
\noindent\emph{Reduction} (c).
Let $\r:=\sigma(K) - \delta=a_1/m - \delta$ and thus $\r>0$.
To achieve this, for each instance $H$ of $\pack(K, 0)$ with $n$ vertices such that $m\mid n$, we define a graph $H'$ as follows. 
Let $H_0=H_0(k, n, \r)$ be a $k$-graph, in which the vertex set is the union of two disjoint sets $A\cup B$, such that $|A|=a_1\lceil n/\r\rceil$ and $|B|=(m-a_1)\lceil n/\r\rceil$.
The edge set of $H_0$ consists of all $k$-vertex sets of $A\cup B$ which have a non-empty intersection with $A$.
Observe that $\delta_{k-1}(H_0)=|A|$ and $H_0$ has a perfect $K$-packing (in which each copy of $K$ contains $a_1$ vertices in $A$ and $m-a_1$ vertices in $B$).
Then let $H'$ be the $k$-graph such that $V(H')=V(H)\cup V(H_0)$ and $E(H')=E(H)\cup E$, where $E$ consists of all $k$-sets that intersect $A$ and thus $E(H_0)\subseteq E$.
Clearly $|V(H')|=n+ m\lceil n/\r \rceil$ and
\[
\delta_{k-1}(H') = |A|= a_1\lceil n/\r \rceil \ge \left(\frac{a_1}{m} - \frac{a_1}{m^2} \r \right) |V(H')| > \delta |V(H')|.
\]
If $H$ has a perfect $K$-packing, so does $H'$.
Now suppose that $H$ does not have a perfect $K$-packing and $H'$ has a perfect $K$-packing $M$.
This means that there exists a copy of $K$ in $M$ with its vertex set denoted by $K'$, such that $K'\cap A\neq \emptyset$ and $K'\cap V(H)\neq \emptyset$.
First assume that $K'\cap B=\emptyset$.
Then since $\frac{|A\setminus K'|}{|B\setminus K'|} = \frac{|A\setminus K'|}{|B|} < a_1/(m-a_1)$, the vertices of $B$ cannot be covered completely by $M$, contradicting the existence of $M$.
Otherwise $K'\cap B\neq\emptyset$. Then clearly $1\le |K'\cap B|\le m-a_1-1$ and $|A\cap K'|\ge a_1$.
Again, $\frac{|A\setminus K'|}{|B\setminus K'|} \le  \frac{|A|-a_1}{|B|-(m-a_1-1)} < a_1/(m-a_1)$, so the rest of the vertices of $B$ cannot be covered completely by $M$, a contradiction.
\end{proof}

\section{The general structural theorem}\label{sec3}
In order to state our general structural theorem, Theorem~\ref{genthm}, we will now introduce some definitions and notation.
\subsection{Almost perfect packings}
Let $k,\ell \in \mathbb N$ where $\ell\leq k-1$. Let $F$ be an $m$-vertex $k$-graph and $D \in \mathbb N$.
Define $\delta(F, \ell, D)$
to be the smallest number $\d$ such that every $k$-graph $H$ on $n$ vertices with $\delta _{\ell} (H) \geq (\d +o(1)) \binom{n-\ell}{k-\ell}$ contains an $F$-packing covering all but at most $D$ vertices.
We write $\delta(k, \ell, D)$ for $\delta(F, \ell, D)$ when $F$ is a single edge. 


\subsection{Lattices and solubility}
One  concept needed to understand the statement and proof of Theorem~\ref{genthm} is that of \emph{lattices and solubility} introduced by Keevash, Knox and Mycroft~\cite{KKM13}.
Let $H$ be an $n$-vertex $k$-graph.
We will work with a vertex partition $\cP=\{ V_1, \dots, V_d\}$ of $V(H)$ for some integer $d\ge 1$.
In this paper, every partition has an implicit ordering of its parts.
The \emph{index vector} $\bfi_{\cP}(S)\in \mathbb{Z}^d$ of a subset $S\subseteq V(H)$ with respect to $\cP$ is the vector whose coordinates are the sizes of the intersections of $S$ with each part of $\cP$, namely, $\bfi_{\cP}(S)|_i=|S\cap V_i|$ for $i\in [d]$, where $\bfv|_i$ is defined as the $i$th digit of $\bfv$. 
For any $\bfv=\{v_1,\dots, v_d\}\in \mathbb{Z}^d$, let $|\bfv|:=\sum_{i=1}^d v_i$.
We say that $\bfv\in \mathbb{Z}^d$ is an \emph{$r$-vector} if it has non-negative coordinates and $|\bfv|=r$. 

Let $F$ be an $m$-vertex $k$-graph and let $\mu>0$.
Define $I_{\cP, F}^\mu(H)$ to be the set of all $\bfi\in \mathbb{Z}^d$ such that $H$ contains at least $\mu n^m$ copies of $F$ with index vector $\bfi$ and let $L_{\cP,F}^{\mu}(H)$ denote the lattice in $\mathbb{Z}^d$ generated by $I_{\cP,F}^\mu(H)$.

Let $q \in \mathbb N$.
A (possibly empty) $F$-packing $M$ in $H$ of size at most $q$ is a \emph{$q$-solution} for $(\cP, L_{\cP,F}^{\mu}(H))$ (in $H$) if $\bfi_{\cP}(V(H)\setminus V(M))\in L_{\cP,F}^{\mu}(H)$; we say that $(\cP, L_{\cP,F}^{\mu}(H))$ is \emph{$q$-soluble} if it has a $q$-solution.

Given a partition $\cP$ of $d$ parts, we write $L_{\max}^d$ for the lattice generated by all $m$-vectors. So $L_{\max}^d:=\{ \bfv\in \mathbb{Z}^d: m \text{ divides }|\bfv| \}$.

Suppose $L\subset L_{\max}^{|\cP|}$ is a lattice in $\mathbb{Z}^{|\cP|}$, where $\cP$ is a partition of a set $V$.
The \emph{coset group} of $(\cP, L)$ is $Q=Q(\cP, L):=L_{\max}^{|\cP|}/L$.
For any $\bfi\in L_{\max}^{|\cP|}$, the \emph{residue} of $\bfi$ in $Q$ is $R_Q(\bfi):=\bfi+L$. For any $A\subseteq V$ of size divisible by $m$, the \emph{residue} of $A$ in $Q$ is $R_Q(A):=R_Q(\bfi_{\cP}(A))$.

\subsection{Reachability and good partitions}\label{sub23}
Let $F$ be an $m$-vertex $k$-graph and let $H$ be an $n$-vertex $k$-graph.
We say that two vertices $u$ and $v$ in $V(H)$ are \emph{$(F, \beta, i)$-reachable in $H$} if there are at least $\beta n^{i m-1}$ $(i m-1)$-sets $S$ such that both $H[S\cup \{u\}]$ and $H[S\cup \{v\}]$ have perfect $F$-packings. 
We refer to such a set $S$ as a \emph{reachable $(im-1)$-set  for $u$ and $v$}.
We say a vertex set $U\subseteq V(H)$ is \emph{$(F, \beta, i)$-closed in $H$} if any two vertices $u,v\in U$ are $(F, \beta, i)$-reachable in $H$.
Given any $v \in V(H)$, define $\tilde{N}_{F,\b, i}(v,H)$ to be the set of vertices in $V(H)$ that are $(F,\beta,i)$-reachable to $v$ in $H$.

Let $\b,c >0$ and $t \in \mathbb N$. A partition $\mathcal P=\{V_1, \dots, V_d\}$ of $V(H)$ is \emph{$(F,\b,t,c)$-good} if the following properties hold:
\begin{itemize}
\item $V_i$ is $(F,\b,t)$-closed in $H$ for all $i \in [d]$;
\item $|V_i|\geq cn$ for all $i \in [d]$.
\end{itemize}

\subsection{Statement of the general structural theorem}\label{state}
With these definitions to hand, we are now able to state the general structural theorem.
Throughout the paper, we write $0<\alpha \ll \beta \ll \gamma$ to mean that we can choose the constants
$\alpha, \beta, \gamma$ from right to left. More
precisely, there are increasing functions $f$ and $g$ such that, given
$\gamma$, whenever we choose some $\beta \leq f(\gamma)$ and $\alpha \leq g(\beta)$, all
calculations needed in our proof are valid. 
Hierarchies of other lengths are defined in the obvious way.

\begin{theorem}[Structural Theorem]\label{genthm}
Let $k,\ell \in \mathbb N$ where $\ell\leq k-1$
and let $F$ be an $m$-vertex $k$-graph. Define $D,q,t,n_0 \in \mathbb N$ and $\b,\mu, \gamma , c>0$ where
$$1/n_0 \ll \b, \mu \ll \gamma, c , 1/m, 1/D, 1/q, 1/t.$$
Let $H$ be a $k$-graph on $n \geq n_0$ vertices where $m$ divides $n$. Suppose that 
\begin{itemize}
\item[(i)] $\delta _{\ell} (H) \geq (\delta(F, \ell, D)+\gamma)\binom{n-\ell}{k-\ell}$;
\item[(ii)] $\mathcal P=\{V_1, \dots, V_d\}$ is an $(F,\b,t,c)$-good partition of $V(H)$;
\item[(iii)] $|Q(\mathcal P, L^{\mu}_{\mathcal P,F}(H))|\leq q$.
\end{itemize}
Then $H$ contains a perfect $F$-packing if and only if $(\mathcal P, L^{\mu}_{\mathcal P,F}(H))$ is $q$-soluble.
\end{theorem}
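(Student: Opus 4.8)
The plan is to prove the two directions separately, with the forward direction being essentially trivial and the reverse direction carrying all the weight. If $H$ has a perfect $F$-packing $M_0$, then the empty packing (or indeed any small sub-packing of $M_0$) is a $q$-solution: the leftover vertex set is $\emptyset$, whose index vector $\mathbf 0$ lies in every lattice, so $(\mathcal P, L^{\mu}_{\mathcal P,F}(H))$ is $q$-soluble. For the converse, suppose $M$ is a $q$-solution, so that $M$ is an $F$-packing of size at most $q$ with $\mathbf i_{\mathcal P}(V(H)\setminus V(M)) \in L^{\mu}_{\mathcal P, F}(H)$. Write $H' := H - V(M)$; then $|V(H')| = n - O(qm)$ is still divisible by $m$, and since $q, m$ are bounded while $\delta_\ell(H)$ is large, $H'$ still satisfies a minimum $\ell$-degree condition of the form $\delta_\ell(H') \ge (\delta(F,\ell,D) + \gamma/2)\binom{n'-\ell}{k-\ell}$, and $\mathcal P$ restricted to $V(H')$ is still an $(F, \beta/2, t, c/2)$-good partition (closedness and the size bounds are robust to deleting a bounded number of vertices, shrinking $\beta$ slightly). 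So it suffices to show: an $F$-packing-free obstruction cannot survive once we are in a good, high-degree $k$-graph $H'$ whose leftover-after-$M$ index vector is $\mathbf 0$, i.e. we must produce a perfect $F$-packing of $H'$.

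The heart of the argument is an \emph{absorbing} method combined with the lattice structure. The key steps, in order, are: (1) Use the good partition together with the closedness hypothesis to build an \emph{absorbing structure}: a small $F$-packing $A$ in $H'$ with the property that for every ``admissible'' small vertex set $W \subseteq V(H') \setminus V(A)$ — admissible meaning $|W|$ is divisible by $m$, $W$ is suitably distributed among the $V_i$ (its index vector lies in the relevant lattice, or more precisely $R_Q(W) = R_Q(V(A))$-compatible), and $|W|$ is bounded — the induced subgraph $H'[V(A) \cup W]$ has a perfect $F$-packing. This is where reachability is used: $(F,\beta,t)$-closedness of each $V_i$ is exactly what lets one swap a vertex of $A$ for an outside vertex of the same part while maintaining a perfect packing, and iterating this gives the absorption property. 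The bounded coset group (iii) guarantees that ``admissible index vector'' is a finite set of conditions, so such an $A$ of bounded size exists. (2) Remove $V(A)$ from $H'$ and apply the definition of $\delta(F,\ell,D)$ to the remainder: since the minimum $\ell$-degree is above $\delta(F,\ell,D) + \gamma/2$ and we deleted boundedly many vertices, $H' - V(A)$ has an $F$-packing $M'$ covering all but at most $D$ vertices; call the uncovered set $W$, so $|W| \le D$. (3) Check that $W$ is admissible: $|W| = |V(H')| - |V(A)| - |V(M')|$ is divisible by $m$, it is bounded by $D$, and crucially its index vector is correct modulo $L^{\mu}_{\mathcal P,F}(H')$ — this follows because $\mathbf i_{\mathcal P}(V(H')) = \mathbf i_{\mathcal P}(V(H) \setminus V(M)) \in L^{\mu}_{\mathcal P,F}(H)$ by the $q$-solution hypothesis, and the index vectors of $V(A)$ and of each copy of $F$ in $M'$ lie in (or are congruent into) the lattice as well (copies of $F$ with $\mu n^m$-abundant index vectors generate $L^{\mu}_{\mathcal P,F}$; one has to handle the few copies with non-generic index vectors, but boundedly many such are absorbable). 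Hence $R_Q(W)$ is the residue needed for absorption. (4) Absorb $W$ into $A$ using the property from step (1): $H'[V(A) \cup W]$ has a perfect $F$-packing, which together with $M'$ and $M$ yields a perfect $F$-packing of all of $H$.

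The main obstacle I expect is step (1) — constructing the absorbing structure with the right flexibility. Two subtleties must be handled carefully. First, the absorber must be able to absorb vertex sets of \emph{any} admissible index vector, not just one fixed distribution; this is where the bounded coset group $Q$ is essential, since it reduces the infinitely many possible leftover sets to finitely many residue classes, each of which needs its own ``gadget'' inside $A$, and one must show finitely many gadgets of bounded size suffice (a supersaturation / random-greedy selection argument over reachable $(tm-1)$-sets). Second, one must reconcile the lattice $L^{\mu}_{\mathcal P,F}(H')$ defined by $\mu$-abundant copies with the actual leftover after the almost-perfect packing $M'$: the set $W$ lands in a lattice coset, but the absorbing machinery needs $W$ to be ``reachable-absorbable'', which requires translating lattice membership into the existence of a perfect $F$-packing on $V(A)\cup W$ — this translation is precisely the technical core and is where the interplay of hypotheses (ii) and (iii) is used. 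The minimum-degree hypothesis (i) is used only lightly, to invoke the definition of $\delta(F,\ell,D)$ after deleting the bounded sets $V(M)$ and $V(A)$; the robustness of good partitions under bounded vertex deletion is routine and I would state it as a short lemma rather than belabour it.
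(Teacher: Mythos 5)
The proposal follows the same general strategy as the paper (lattice-based absorbing method), but it contains two concrete errors, and both are precisely the places where hypothesis~(iii) (the bounded coset group $|Q|\le q$) must do real work.

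\textbf{Forward direction.} Your claim that the empty packing is a $q$-solution is false. The leftover of the empty packing is $V(H)$, not $\emptyset$, and there is no reason $\bfi_{\cP}(V(H))$ should lie in $L^{\mu}_{\cP,F}(H)$. The full perfect packing $M_0$ does have empty leftover, but $|M_0|=n/m\gg q$, so it is not a valid $q$-solution, and not ``any small sub-packing'' works. The correct argument starts from $M_0$, looks at the $|M_0|+1$ partial sums of residues $\sum_{i\le j}R_Q(V(F_i))$ in the finite group $Q$, and repeatedly removes the block between two equal partial sums; because $|Q|\le q$, a minimal sub-packing $M'\subset M_0$ with $\bfi_{\cP}(V(H)\setminus V(M'))\in L$ has size at most $q-1$. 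This pigeonhole step is not optional.

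\textbf{Backward direction.} Step~(3) asserts that the index vectors of $V(A)$ and of each copy of $F$ in the almost-perfect packing $M'$ ``lie in (or are congruent into) the lattice''. This is false: the almost-perfect packing provided by the degree condition can contain $\Theta(n)$ copies of $F$ whose index vectors are completely arbitrary, and their total contribution is not controlled. The leftover set $W$ therefore has \emph{no} a priori reason to land in the right residue class, and your parenthetical ``one has to handle the few copies with non-generic index vectors, but boundedly many such are absorbable'' is exactly where the argument breaks — there need not be boundedly many. The paper's fix is again a pigeonhole on residues in $Q$: since $|Q|\le q$, from among the copies in $\F_0$ and $M_3$ (the absorber and the almost-perfect packing) one can choose at most $q-1$ copies $F_1,\dots,F_p$ whose residues sum to the class that compensates $R_Q(U)$, so that $Y:=\bigcup V(F_i)\cup U$ has $\bfi_{\cP}(Y)\in L$ and $|Y|\le mq+D$. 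You also need the auxiliary ``storage'' packing (the paper's $M_2$, containing a constant number $C'$ of copies of $F$ with every index vector in $I^{\mu}_{\cP,F}(H)$) to deal with negative coefficients when writing $\bfi_{\cP}(Y)$ as an integer combination of generators; your sketch never produces this object. Without the pigeonhole extraction and the storage packing, step~(4) has no admissible set to absorb. The remaining ingredients you list — building the absorbers from reachability, the robustness of the good partition and degree under bounded deletion, and the final absorption — are consistent with the paper.
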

At first sight Theorem~\ref{genthm} may seem somewhat technical.
In particular, it may not be clear the roles that conditions (i)--(iii) play. We will explain this in more detail now.

In the proof of (the backward implication of) Theorem~\ref{genthm} we will utilise the \emph{absorbing method}. This technique was initiated  by R\"odl, Ruci\'nski and Szemer\'edi \cite{RRS06} and has proven to be a powerful tool for finding spanning structures in graphs and hypergraphs. 
 Fix an integer $i>0$ and a $k$-graph $F$. Let $H$ be a $k$-graph. For a set $S\subseteq V(H)$, we say a set $T\subseteq V(H)$ is an \emph{absorbing $(F,i)$-set for $S$} if $|T|=i$ and both $H[T]$ and $H[T\cup S]$ contain perfect $F$-packings. Informally, we will refer to $T$ as an \emph{absorbing set for $S$} and say $T$ \emph{absorbs} $S$.

Often in  proofs employing the absorbing method the goal is to find some small set $A$ such that for \emph{any} very small set of vertices $S$ in $H$, $A$ absorbs $S$.
In particular, if one could guarantee such a set $A$ in Theorem~\ref{genthm} then we would ensure a perfect $F$-packing:
By (i), $H\setminus A$ would have an almost perfect $F$-packing. Then $A$ can be used to absorb the uncovered vertices to obtain a perfect $F$-packing.

Not all $k$-graphs satisfying the hypothesis of Theorem~\ref{genthm} will have a perfect $F$-packing; so one cannot obtain such a set $A$ in general.
Instead,  in  the proof of 
Theorem~\ref{genthm} we will  apply
 the \emph{lattice-based absorbing method} developed recently by the first author~\cite{Han14_poly}:
What one can \emph{always} guarantee in our case is a small family of absorbing sets $\mathcal F_{abs}$ with the property that for every $m$-vertex set $S\subseteq V(H)$ such that
$\bfi_{\cP}(S)\in I_{\cP,F}^{\mu} (H)$, there are many sets in $\mathcal F_{abs}$ that do absorb $S$. 
This is made precise in Lemma~\ref{lem:abs} in Section~\ref{secabs}. We remark that to obtain $\mathcal F_{abs}$ it was crucial that condition (ii) in Theorem~\ref{genthm} holds.

Now suppose $M$ is an almost perfect $F$-packing  in $H\setminus V(\mathcal F_{abs})$. Let $U$ denote the vertices in $H\setminus V(\mathcal F_{abs})$ uncovered by $M$.
If there is a partition $S_1, \dots, S_s$ of $U$ such that $\bfi_{\cP}(S_i)\in I_{\cP,F}^{\mu} (H)$ for each $i$, then by definition of $\mathcal F_{abs}$ we can absorb the vertices in $U$ to obtain
a perfect $F$-packing in $H$. 
To find such a partition of $U$ we certainly would need that $\bfi_{\cP}(U)\in L_{\cP,F}^{\mu} (H)$. 
This is where the property that $(\mathcal P, L^{\mu}_{\mathcal P,F}(H))$ is $q$-soluble is vital: by definition this allows us to find an $F$-packing $M_1$ of size at most $q$ such that $\bfi_{\cP}(V(H)\setminus V(M_1))\in L_{\cP,F}^{\mu}(H)$. Roughly speaking, the idea is that by removing the vertices of $M_1$ from $H$ we 
now have a more `balanced' $k$-graph where (by following the steps outlined above) we do obtain a set of uncovered vertices $U$ that can be fully absorbed using the family $\mathcal F_{abs}$.
This step is a little involved; that is, some careful refinement of the uncovered set $U$ is still needed to ensure there is a partition $S_1, \dots, S_s$ of $U$ such that $\bfi_{\cP}(S_i)\in I_{\cP,F}^{\mu} (H)$ for each $i$.

Condition (iii) is applied in both the forward and backward implication of Theorem~\ref{genthm}. In particular, this is precisely the condition required to show that if $H$ has a perfect matching
then $(\mathcal P, L^{\mu}_{\mathcal P,F}(H))$ is $q$-soluble.

In the next section we prove the absorbing lemma and in Section~\ref{secgen} we prove Theorem~\ref{genthm}.

\section{Absorbing lemma}\label{secabs}

The following result guarantees our collection $\F_{abs}$ of absorbing sets in the proof of Theorem~\ref{genthm}.
\begin{lemma}[Absorbing Lemma] \label{lem:abs}
Suppose $F$ is an $m$-vertex $k$-graph and
\[
1/n \ll 1/c \ll \beta, \mu \ll 1/m, 1/t,
\]
and $H$ is a $k$-graph on $n$ vertices.
Suppose $\cP=\{ V_1, \dots, V_d\}$ is a partition of $V(H)$ such that for each $i\in [d]$, $V_i$ is $(F,\beta, t)$-closed. Then there is a family $\F_{abs}$ of disjoint $t m^2$-sets with size at most $c\log n$ such that for each $A\in \F_{abs}$, $H[A]$ contains a perfect $F$-packing and every $m$-vertex set $S$ with $\bfi_{\cP}(S)\in I_{\cP,F}^{\mu} (H)$ has at least $\sqrt{\log n}$ absorbing $(F,t m^2)$-sets in $\F_{abs}$. 
\end{lemma}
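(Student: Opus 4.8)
The plan is to build $\F_{abs}$ by a probabilistic selection argument, after first establishing that each individual $m$-set $S$ with $\bfi_\cP(S)\in I^\mu_{\cP,F}(H)$ has \emph{many} absorbing $(F,tm^2)$-sets in $H$. The first step is therefore a counting lemma: I would show that if $\bfi_\cP(S)\in I^\mu_{\cP,F}(H)$, then $H$ contains at least $c' n^{tm^2}$ absorbing $(F,tm^2)$-sets $T$ for $S$, for some constant $c'=c'(\beta,\mu,m,t)>0$. To produce one such $T$: since $\bfi_\cP(S)\in I^\mu_{\cP,F}(H)$, there are $\geq \mu n^m$ copies of $F$ on vertex sets $S'$ with $\bfi_\cP(S')=\bfi_\cP(S)$; fix such an $S'$ (disjoint from $S$, which costs only $O(n^{m-1})$ choices). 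Writing $S=\{s_1,\dots,s_m\}$ and $S'=\{s'_1,\dots,s'_m\}$ with $s_j,s'_j$ in the same part $V_{\pi(j)}$, the closedness of each $V_{\pi(j)}$ gives $\geq \beta n^{tm-1}$ reachable $(tm-1)$-sets $S_j$ for the pair $s_j,s'_j$; greedily choosing these to be pairwise disjoint and disjoint from $S\cup S'$ (again losing only lower-order terms) and setting $T:=S'\cup\bigcup_{j}S_j$, we get $|T|=m+m(tm-1)=tm^2$. Then $H[T]$ has a perfect $F$-packing — pair up $s'_j$ with $S_j$ using the ``$S_j\cup\{s'_j\}$ has a perfect $F$-packing'' half of reachability — and $H[T\cup S]$ has one too, using the ``$S_j\cup\{s_j\}$'' half together with the copy of $F$ on $S'$. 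This yields the required $\Theta(n^{tm^2})$ absorbing sets, since each $T$ is counted at most a bounded number of times.

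The second step is the standard absorbing-family extraction. Choose a family $\F$ of $tm^2$-sets by picking each $tm^2$-subset of $V(H)$ independently with probability $p:=c''\log n / n^{tm^2}$ for a suitable constant $c''$. With high probability: (a) $|\F|\leq c\log n$; (b) the number of intersecting pairs in $\F$ is small, so we can delete one set from each such pair and also discard any $T$ with $H[T]$ having no perfect $F$-packing, leaving a subfamily of pairwise disjoint ``valid'' sets of size still at most $c\log n$; (c) for each of the (at most $n^m$) index vectors $\bfi\in I^\mu_{\cP,F}(H)$ — or more simply, for each $m$-set $S$ with $\bfi_\cP(S)\in I^\mu_{\cP,F}(H)$, of which there are at most $n^m$ — the expected number of absorbing sets for $S$ retained in $\F$ is $\geq p\cdot c' n^{tm^2}/2 = \Theta(\log n)$, so by a Chernoff bound the probability it is less than, say, $\tfrac12$ of its mean is $e^{-\Omega(\log n)}=n^{-\omega(1)}$, which beats the union bound over the $\leq n^m$ sets $S$. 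After the (lower-order) deletions in (b), each $S$ still has at least $\sqrt{\log n}$ absorbing sets in the final $\F_{abs}$. Relabel the surviving family as $\F_{abs}$; by construction each $A\in\F_{abs}$ induces a perfect $F$-packing in $H[A]$, the sets are disjoint, $|\F_{abs}|\leq c\log n$, and every such $S$ has $\geq\sqrt{\log n}$ absorbing $(F,tm^2)$-sets in $\F_{abs}$.

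The main obstacle is the first step — verifying that reachability and closedness compose correctly to yield $\Theta(n^{tm^2})$ genuine absorbing $(F,tm^2)$-sets for each relevant $S$, while keeping all the disjointness requirements (the $S_j$'s among themselves, from $S$, from $S'$). One has to be careful that the greedy disjoint selection of the reachable sets $S_1,\dots,S_m$ loses only a $1-o(1)$ fraction at each stage (true since each $|S_j|=tm-1$ is bounded and we have removed only $O(tm^2)$ vertices so far, whereas $\beta n^{tm-1}$ choices are available), and that the two perfect $F$-packings of $H[T]$ and $H[T\cup S]$ really do exist: they are assembled from the $m$ small $F$-packings on $S_j\cup\{s'_j\}$ (resp.\ $S_j\cup\{s_j\}$) guaranteed by reachability, plus in the second case the copy of $F$ on $S'$. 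Everything after that — the probabilistic deletion argument and the Chernoff/union-bound bookkeeping — is routine, the only point to note being that there are at most $n^m$ distinct $m$-sets $S$ to control, comfortably within the $n^{-\omega(1)}$ failure probability per set.
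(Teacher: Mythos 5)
Your proposal is correct and follows essentially the same two-step argument as the paper: a counting claim showing each relevant $m$-set $S$ has $\Theta(n^{tm^2})$ absorbing $(F,tm^2)$-sets (fix a disjoint copy $F'$ with the same index vector, then greedily append pairwise-disjoint reachable $(tm-1)$-sets for each pair $s_j,s'_j$ using closedness of the parts), followed by random sparse selection of $tm^2$-sets controlled via Markov (size, disjointness) and Chernoff plus a union bound over the $\le n^m$ relevant $S$. One small notational slip: $e^{-\Omega(\log n)}$ is $n^{-\Theta(1)}$ rather than $n^{-\omega(1)}$, so to beat the $n^m$ union bound you need the sampling constant to be large enough that the exponent exceeds $m$ — which is exactly what the paper's hierarchy $1/c\ll\beta,\mu\ll 1/m$ arranges — but the substance of your argument already accounts for this by allowing a ``suitable constant'' $c''$.
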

\proof
Our first task is to prove the following claim.
\begin{claim}\label{clm:abs}
Any $m$-set $S$ with $\bfi_{\cP}(S)\in I_{\cP,F}^{\mu}(H)$ has at least $\mu \beta^{m+1} n^{t m^2}$ absorbing $(F,t m^2)$-sets.
\end{claim}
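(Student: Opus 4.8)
The plan is to construct an absorbing $(F, tm^2)$-set for $S$ greedily, one vertex of $S$ at a time, using the reachability hypothesis. Write $S = \{s_1, \dots, s_m\}$. Since $\bfi_{\cP}(S) \in I_{\cP,F}^{\mu}(H)$, there are at least $\mu n^m$ copies of $F$ in $H$ with index vector $\bfi_{\cP}(S)$; pick one such copy, say on vertex set $W_0 = \{w_1, \dots, w_m\}$, where the labelling is chosen so that $s_j$ and $w_j$ lie in the same part $V_{a_j}$ of $\cP$ for every $j \in [m]$ (this is possible precisely because $S$ and $W_0$ have the same index vector). The point of $W_0$ is that $H[W_0]$ has a perfect $F$-packing (trivially, it is a copy of $F$), and we will build, around each pair $(s_j, w_j)$, a gadget that lets us swap $s_j$ for $w_j$.

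For each $j$, since $s_j$ and $w_j$ both lie in $V_{a_j}$, which is $(F, \beta, t)$-closed, they are $(F, \beta, t')$-reachable for some $t' \le t$; the key is that there are at least $\beta n^{t'm - 1}$ reachable $(t'm-1)$-sets $R$ with the property that both $H[R \cup \{s_j\}]$ and $H[R \cup \{w_j\}]$ have perfect $F$-packings. I will select such sets $R_1, \dots, R_m$ greedily so that they are pairwise disjoint and disjoint from $W_0$ and from $S$: when choosing $R_j$, the number of forbidden choices (those meeting the $O(m + tm \cdot m) = O(tm^2)$ already-chosen vertices) is at most $tm^2 \cdot n^{t'm - 2} = o(n^{t'm-1})$, so at least, say, $\tfrac{\beta}{2} n^{t'm-1} \ge \beta^2 n^{t'm-1}$ valid choices remain at each step. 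Pad each $R_j$ with fresh disjoint vertices if necessary so that $|R_j| = tm - 1$ exactly, keeping the padding vertices distinct across all $j$ and absorbing them into a dummy copy of $F$ — more cleanly, one fixes $t' = t$ by the standard fact (from the definition of closedness, or an easy monotonicity argument) that $(F,\beta,t')$-reachability upgrades to $(F,\beta^2, t)$-reachability; I would invoke whichever form is convenient. Now set $T := W_0 \cup R_1 \cup \dots \cup R_m$, a set of exactly $m + m(tm - 1) = tm^2$ vertices.

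It remains to check that $T$ absorbs $S$, i.e. that both $H[T]$ and $H[T \cup S]$ have perfect $F$-packings. For $H[T]$: take the copy of $F$ on $W_0$ together with, for each $j$, a perfect $F$-packing of $H[R_j \cup \{w_j\}]$ — these partition $T$ exactly. For $H[T \cup S]$: for each $j$, take a perfect $F$-packing of $H[R_j \cup \{s_j\}]$; together with the copy of $F$ on $W_0$ these cover $S \cup R_1 \cup \dots \cup R_m \cup W_0 = T \cup S$ exactly. Hence $T$ is an absorbing $(F, tm^2)$-set for $S$. Finally, to count: there are at least $\mu n^m$ choices for $W_0$, and having fixed it, at least $\beta^2 n^{tm-1}$ (disjoint, valid) choices for each of $R_1, \dots, R_m$ in turn, giving at least $\mu n^m \cdot (\beta^2 n^{tm-1})^m = \mu \beta^{2m} n^{m + m(tm-1)} = \mu \beta^{2m} n^{tm^2}$ ordered tuples $(W_0, R_1, \dots, R_m)$; dividing by the number of orderings (a constant depending only on $m$) and crudely bounding gives at least $\mu \beta^{m+1} n^{tm^2}$ distinct absorbing sets, as claimed (the exponent $m+1$ versus $2m$ is a safe slack that also absorbs the overcounting and the $t' \to t$ loss). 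The main obstacle is purely bookkeeping: keeping all the disjointness requirements consistent while padding the reachable sets to a uniform size $tm-1$ and making sure the greedy choice at each step still has $\Omega(n^{tm-1})$ options; there is no real difficulty beyond carefully tracking how many vertices have been committed.
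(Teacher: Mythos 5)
Your construction is exactly the paper's: pick a disjoint copy $W=\{x_1,\dots,x_m\}$ of $F$ with the same index vector as $S=\{y_1,\dots,y_m\}$, label so that $x_j,y_j$ lie in the same part of $\cP$, greedily choose pairwise-disjoint reachable $(tm-1)$-sets $T_j$ for each pair $(x_j,y_j)$, and take $W\cup T_1\cup\cdots\cup T_m$; the two perfect $F$-packings come from the partitions $T_1\cup\{x_1\},\dots,T_m\cup\{x_m\}$ of the absorbing set, and $W,\,T_1\cup\{y_1\},\dots,T_m\cup\{y_m\}$ of its union with $S$. Two small points. First, the detour through $t'\le t$ and padding is unnecessary: the hypothesis already says each part $V_i$ is $(F,\beta,t)$-closed, which by definition gives at least $\beta n^{tm-1}$ reachable $(tm-1)$-sets of exactly the required size for any pair of vertices in the same $V_i$, so there is nothing to pad. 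Second, your final arithmetic runs the wrong way: you weaken the per-step count from $\frac{\beta}{2}n^{tm-1}$ to $\beta^2 n^{tm-1}$ and then assert that $\mu\beta^{2m}n^{tm^2}$, divided by an $(m,t)$-dependent constant, is at least $\mu\beta^{m+1}n^{tm^2}$. But since $0<\beta<1$ and $2m>m+1$ we have $\beta^{2m}<\beta^{m+1}$, not the reverse, so the slack you invoke is not there. The fix is to keep $\frac{\beta}{2}$ per step, which after dividing by $(tm^2)!$ for overcounting yields at least $\frac{\mu}{2}\cdot\frac{\beta^m}{2^m}\cdot\frac{1}{(tm^2)!}\,n^{tm^2}$ absorbing sets, and then to use the hierarchy $\beta\ll 1/m,1/t$ to see $\frac{1}{2^{m+1}(tm^2)!}\ge\beta$; this recovers the claimed $\mu\beta^{m+1}n^{tm^2}$ and is precisely the paper's calculation.
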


\begin{proof}
For an $m$-set $S=\{y_1,\dots, y_m\}$ with $\bfi_{\cP}(S)\in I_{\cP,F}^{\mu}(H)$, we construct absorbing $(F,t m^2)$-sets for $S$ as follows. 
We first fix a copy $F'$ of $F$ with  vertex set $W=\{x_1, \dots, x_m\}$ in $H$ such that $\bfi_{\cP}(W)=\bfi_{\cP}(S)\in I_{\cP,F}^{\mu}(H)$ and $W\cap S=\emptyset$. 
Note that we have at least $\mu n^m - m n^{m-1} >\frac{\mu}2 n^m$ choices for such $F'$. 
Without loss of generality, we may assume that for all $i\in [m]$, $x_i, y_i$ are in the same part of $\cP$. 
Since $x_i$ is $(F,\beta, t)$-reachable to $y_i$, there are at least $\beta n^{t m-1}$ $(t m-1)$-sets $T_i$ such that both $H[T_i\cup \{x_i\}]$ and $H[T_i\cup \{y_i\}]$ have perfect $F$-packings. 
We pick disjoint reachable $(t m-1)$-sets for each $x_i, y_i$, $i\in [m]$ greedily, while avoiding the existing vertices. 
Since the number of existing vertices is at most $t m^2+m$, we have at least $\frac{\beta}2 n^{t m-1}$ choices for such $(t m-1)$-sets in each step.
Note that  $W\cup T_1\cup \cdots \cup T_{m}$ is an absorbing set for $S$. 
First, it contains a perfect $F$-packing because each $T_i\cup \{x_i\}$ for $i\in [m]$ spans $t$ vertex-disjoint copies of $F$. 
Second, $H[W\cup T_1\cup \cdots \cup T_{m}\cup S]$ also contains a perfect $F$-packing because $F'$ is a copy of $F$ and each $T_i\cup \{y_i\}$ for $i\in [m]$ spans $t$ vertex-disjoint copies of $F$. 
There were at least $\frac{\mu}2 n^m$ choices for $W$ and at least $\frac{\beta}2 n^{t m-1}$ choices for each $T_i$.
Thus we  find at least 
$$\frac{\mu}2 n^m \times \frac{\beta^{m}}{2^m} n^{t m^2-m} \times \frac{1}{(tm^2)!} \geq \mu \beta^{m+1} n^{t m^2}$$ absorbing $(F,t m^2)$-sets for $S$.
\end{proof}

We pick a family $\mathcal{F}$ of $t m^2$-sets by including every $t m^2$-subset of $V(H)$ with probability $p=c n^{-t m^2} \log n$ independently, uniformly at random. Then the expected number of elements in $\mathcal{F}$ is $p\binom{n}{t m^2}\le \frac{c}{t m^2}\log n$ and the expected number of intersecting pairs of $t m^2$-sets is at most
\[
p^2\binom{n}{t m^2} \cdot t m^2 \cdot \binom{n}{t m^2-1} \le \frac{c^2 (\log n)^2}{n}=o(1).
\]
Then by Markov's inequality, with probability at least $1-1/(t m^2)-o(1)$, $\mathcal{F}$ contains at most $c \log n$ sets and they are pairwise vertex disjoint.

For every $m$-set $S$ with $\bfi_{\cP}(S)\in I_{\cP,F}^{\mu}(H)$, let $X_S$ be the number of absorbing sets for $S$ in $\mathcal{F}$. Then by Claim \ref{clm:abs},
\[
\mathbb{E}(X_S) \ge p {\mu \beta^{m+1}} n^{t m^2} = {\mu \beta^{m+1} c\log n}.
\]
By Chernoff's bound, 
\[
\mathbb{P}\left( X_S\le \frac12 \mathbb{E}(X_S) \right) \le \exp\left\{ -\frac18 \mathbb{E}(X_S) \right\} \le \exp\left\{ -\frac{\mu \beta^{m+1} c \log n}{8} \right\}= o(n^{-m}),
\]
since $1/c \ll \beta,\mu \ll 1/m$.
Thus, with probability $1-o(1)$, for each $m$-set $S$ with $\bfi_{\cP}(S)\in I_{\cP,F}^{\mu}(H)$, there are at least
\[
\frac12\mathbb{E}(X_S) \ge \frac{\mu \beta^{m+1} c\log n}{2} > \sqrt{\log n}
\]
absorbing sets for $S$ in $\F$. We obtain $\F_{abs}$ by deleting the elements of $\mathcal{F}$ that are not absorbing sets for any $m$-set $S$ and thus $|\F_{abs}|\le |\F|\le c\log n$.
\endproof

\section{Proof of Theorem~\ref{genthm}}\label{secgen}

\subsection{Proof of the forward implication of Theorem~\ref{genthm}}
If $H$ contains a perfect $F$-packing $M$, then $\bfi_{\cP}(V(H)\setminus V(M))=\mathbf{0}\in L_{\cP,F}^{\mu}(H)$.
We will show that there exists an $F$-packing $M'\subset M$ of size at most $q$ such that $\bfi_{\cP}(V(H)\setminus V(M'))\in L_{\cP,F}^{\mu}(H)$ and thus $(\cP, L_{\cP,F}^{\mu}(H))$ is $q$-soluble.
Indeed, suppose $M'\subset M$ is a minimum $F$-packing such that $\bfi_{\cP}(V(H)\setminus V(M'))\in L_{\cP,F}^{\mu}(H)$ and $|M'|=m'\ge q$.
Let $M'=\{e_1, \dots, e_{m'}\}$ and consider the $m'+1$ partial sums
\[
\sum_{i=1}^{j}\bfi_{\cP}(e_i)+L_{\cP,F}^{\mu}(H) = \sum_{i=1}^{j} R_{Q(\cP, L_{\cP,F}^{\mu}(H))}(e_i),
\]
for $j=0,1,\dots, m'$.
Since $|Q(\cP, L_{\cP,F}^{\mu}(H))|\le q\le m'$, two of the sums must be equal.
That is, there exists $0\le j_1< j_2\le m'$ such that
\[
\sum_{i=j_1+1}^{j_2}\bfi_{\cP}(e_i) \in L_{\cP,F}^{\mu}(H).
\]
So the $F$-packing $M'':=M'\setminus \{e_{j_1+1},\dots, e_{j_2}\}$ satisfies that $\bfi_{\cP}(V(H)\setminus V(M''))\in L_{\cP,F}^{\mu}(H)$ and $|M''|< |M'|$, a contradiction.
\endproof

\subsection{Proof of the backward implication of Theorem~\ref{genthm}}

Suppose $I$ is a set of $m$-vectors of $\mathbb{Z}^d$ and $J$ is a (finite) set of vectors such that any $\bfi\in J$ can be written as a linear combination of vectors in $I$, namely, there exist $a_\bfv(\bfi)\in \mathbb{Z}$ for all $\bfv\in I$, such that
\begin{equation*}
\bfi=\sum_{\bfv\in I}a_{\bfv}(\bfi) \bfv. 
\end{equation*}
We denote by $C(d, m, I, J)$ as the maximum of $|a_{\bfv}(\bfi)|, \bfv\in I$ over all $\bfi\in J$.

The proof of  the backward implication of Theorem~\ref{genthm}
consists of a few steps.
We first fix an $F$-packing $M_1$, a $q$-solution of $(\cP, L_{\cP,F}^{\mu}(H))$.
We apply Lemma \ref{lem:abs} to $H$ and get a family $\F_{abs}$ of $t m^2$-sets of size at most $c\log n$.
Let $\F_{0}$ be the subfamily of $\F_{abs}$ that do not intersect $V(M_1)$.
Next we find a set $M_2$ of disjoint copies of $F$, which includes (constantly) many copies of $F$ for each $m$-vector in $I_{\cP,F}^{\mu}(H)$.
Now by definition of $\delta (F,\ell,D)$, in $H[V\setminus (V(\F_0)\cup V(M_1\cup M_2))]$ we find an $F$-packing $M_3$ covering all but a set $U$ of at most $D$ vertices.
The remaining job is to `absorb' the vertices in $U$.
Roughly speaking, by the solubility condition, we can release some copies of $F$ in some members of $\F_0$ and $M_3$, such that the set $Y\supseteq U$ of uncovered vertices satisfies that $\bfi_\cP(Y)\in L_{\cP,F}^\mu(H)$.
Furthermore, by releasing some copies of $F$ in $M_2$, we can partition the new set of uncovered vertices as a collection of $m$-sets $S$ such that $\bfi_\cP(S)\in I_{\cP,F}^{\mu}(H)$ for each $S$.
Then we can finish the absorption by the absorbing property of $\F_0$.

\begin{proof}[Proof of the backward implication of Theorem~\ref{genthm}]
Define an additional constant $C>0$ so that 
\[
1/n_0 \ll 1/C\ll \beta, \mu .
\]
Let $H$ be as in the statement of the theorem. 
Moreover, assume that $(\cP, L_{\cP,F}^{\mu}(H))$ is $q$-soluble.
We first apply Lemma~\ref{lem:abs} to $H$ and get a family $\F_{abs}$ of disjoint $t m^2$-sets of size at most $C\log n$ such that every $m$-set $S$ of vertices with $\bfi_{\cP}(S)\in I_{\cP,F}^{\mu}(H)$ has at least $\sqrt{\log n}$ absorbing $(F,t m^2)$-sets in $\F_{abs}$. 

Since $(\cP, L_{\cP,F}^{\mu}(H))$ is $q$-soluble, there exists an $F$-packing $M_1$ of size at most $q$ such that $\bfi_{\cP}(V(H)\setminus V(M_1))\in L_{\cP,F}^{\mu}(H)$. 
Note that $V(M_1)$ may intersect $V(\F_{abs})$ in at most $qm$ absorbing sets of $\F_{abs}$. 
Let $\F_{0}$ be the subfamily of $\F_{abs}$ obtained from removing the $t m^2$-sets that intersect $V(M_1)$. 
Let $M_0$ be the perfect $F$-packing on $V(\F_0)$ that is the union of the perfect $F$-packings on each member of $\F_0$. 
Note that every $m$-set $S$ of vertices with $\bfi_{\cP}(S)\in I_{\cP,F}^{\mu}(H)$ has at least $\sqrt{\log n} - qm$ absorbing sets in $\F_{0}$.

Next we want to `store' some copies of $F$ for each $m$-vector in $I_{\cP,F}^{\mu}(H)$ for future use. 
More precisely, let $J$ be the set of all $m'$-vectors in $L_{\cP,F}^{\mu}(H)$ such that $0\le m'\le qm+D$ and set $C':=C(d, m, I_{\cP,F}^{\mu}(H), J)$.
We find an $F$-packing $M_2$ in $H\setminus V(M_{0}\cup M_1)$ which contains $C'$ copies $F'$ of $F$ with $\bfi_{\cP}(F')=\bfi$ for every $\bfi\in I_{\cP,F}^{\mu}(H)$. 
So $|M_2|\le \binom{m+d-1}{m}C'$ and the process is possible because $H$ contains at least $\mu n^m$ copies of $F$ for each $\bfi\in I_{\cP,F}^{\mu}(H)$ and $|V(M_0\cup M_{1}\cup M_2)|\le t m^2 C\log n + q m + \binom{m+d-1}{m}C' m < \mu n$.

\medskip
Let $H':=H\setminus V(M_0\cup M_{1} \cup M_2)$ and $n':=|H'|$. So $n'\ge n - \mu n$ and
\[
\delta_{\ell}(H') \ge \delta_{\ell}(H) - \mu n^{k-\ell} \ge  (\delta(F,\ell ,D)+\gamma /2) \binom{n'-\ell}{k-\ell}.
\]
By the definition of $\delta(F,\ell ,D)$ we have an $F$-packing $M_3$ in $H'$ covering all but at most $D$ vertices.
Let $U$ be the set of vertices in $H'$  uncovered by $M_3$.

Let $Q:=Q(\cP, L_{\cP,F}^{\mu}(H))$.
Recall that $\bfi_{\cP}(V(H)\setminus V(M_1))\in L_{\cP,F}^{\mu}(H)$. 
Note that by definition, the index vectors of all copies of $F$ in $M_2$ are in $I_{\cP,F}^{\mu}(H)$. So we have $\bfi_{\cP}(V(H)\setminus V(M_1\cup M_2))\in L_{\cP,F}^{\mu}(H)$, namely, $R_Q(V(H)\setminus V(M_1\cup M_2))=\mathbf{0}+ L_{\cP,F}^{\mu}(H)$. 
Thus,
\[
 \sum_{F' \in M_{0}\cup M_3} R_Q(V(F'))  +R_Q(U) = \mathbf{0}+ L_{\cP,F}^{\mu}(H).
\]
Suppose $R_Q(U)=\bfv_{0}+L_{\cP,F}^{\mu}(H)$ for some $\bfv_0\in L_{\max}^{d}$; so
\[
\sum_{F' \in M_{0}\cup M_3} R_Q(V(F'))  = -\bfv_0+ L_{\cP,F}^{\mu}(H).
\]

\begin{claim}\label{clm:35}
There exist $F_1, \dots, F_{p} \in M_{0}\cup M_3$ for some $p\le q-1$ such that 
\begin{equation}\label{eq:RQ}
\sum_{i\in [p]} R_Q(V(F_i))  = -\bfv_0+ L_{\cP,F}^{\mu}(H).
\end{equation}
\end{claim}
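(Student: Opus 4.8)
The plan is to run the same pigeonhole argument on partial sums of residues that was used in the forward implication of Theorem~\ref{genthm}, but this time over the family $M_0\cup M_3$ rather than over a perfect $F$-packing. First I would list the copies of $F$ in $M_0\cup M_3$ as $F_1,F_2,\dots,F_N$ in some order, and consider the $N+1$ partial sums $s_j:=\sum_{i=1}^{j} R_Q(V(F_i))$ for $j=0,1,\dots,N$, which are elements of the coset group $Q=Q(\cP,L_{\cP,F}^{\mu}(H))$. By hypothesis~(iii) of Theorem~\ref{genthm}, $|Q|\le q$. If $N\ge q$, then among $s_0,s_1,\dots,s_q$ two must coincide, say $s_{j_1}=s_{j_2}$ with $j_1<j_2$, so that $\sum_{i=j_1+1}^{j_2}R_Q(V(F_i))=\mathbf 0+L_{\cP,F}^{\mu}(H)$; deleting the block $F_{j_1+1},\dots,F_{j_2}$ from our chosen sub-collection does not change the total residue. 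Repeating this reduction, I can keep shrinking the collection until it has size at most $q-1$ while its residue sum is unchanged. The starting point is the identity recorded just before the claim, namely $\sum_{F'\in M_0\cup M_3}R_Q(V(F'))=-\bfv_0+L_{\cP,F}^{\mu}(H)$, so the final sub-collection $F_1,\dots,F_p$ with $p\le q-1$ satisfies \eqref{eq:RQ}.

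One minor point to address carefully is the degenerate case: if already $N\le q-1$ we simply take the whole family and are done, and if at some stage the reduction removes \emph{all} remaining copies we would be claiming $\mathbf 0=-\bfv_0$ in $Q$, i.e. $p=0$; this is permitted since $p\le q-1$ allows $p=0$, and it just says $\bfv_0\in L_{\cP,F}^{\mu}(H)$ to begin with. So no separate case analysis is really needed beyond observing that the process terminates with a collection of size between $0$ and $q-1$.

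I do not expect a genuine obstacle here — the argument is essentially a re-packaging of the pigeonhole trick already carried out in the forward implication, the only difference being that we are extracting a sub-collection \emph{realising} a prescribed residue rather than trimming a zero-residue packing down to bounded size. The one thing to be slightly careful about is bookkeeping: each deletion step strictly decreases the size of the working collection, and the residue sum is invariant under deleting a zero-residue block, so after finitely many steps the size drops to at most $q-1$; this is why the bound $p\le q-1$ (and not merely $p\le q$) comes out. The closure/degree conditions (i) and (ii) play no role in this particular claim — it is purely a statement about the coset group $Q$ and the residues of the copies of $F$ we have already constructed.
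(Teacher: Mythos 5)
Your argument is correct and is essentially the same pigeonhole-on-partial-sums reduction that the paper uses; the only cosmetic difference is that you phrase it as an iterative deletion process starting from all of $M_0\cup M_3$, whereas the paper phrases it as a minimal-counterexample contradiction, but these are the same proof.
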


\begin{proof}
Assume to the contrary that $F_1, \dots, F_{p} \in M_{0}\cup M_3$ is a minimum set of copies of $F$ such that \eqref{eq:RQ} holds and $p\ge q$.
Consider the $p+1$ partial sums $\sum_{i\in [j]} R_Q(V(F_i))$ for $j=0,1,\dots, p$, where the sum equals $\mathbf{0} + L_{\cP,F}^{\mu}(H)$ when $j=0$.
Since $|Q|\le q$,  two of the partial sums must be equal, that is, there exist $0\le p_1<p_2\le p$ such that $\sum_{p_1< i\le p_2} R_Q(V(F_i)) = \mathbf{0} + L_{\cP,F}^{\mu}(H)$.
So we get a smaller collection of copies of $F$ in $M_0\cup M_3$ such that \eqref{eq:RQ} holds, a contradiction.
\end{proof}

So we have $\sum_{i\in [p]} \bfi_{\cP}(V(F_i))+\bfi_{\cP}(U)\in L_{\cP,F}^{\mu}(H)$. Let $Y:=\bigcup_{i\in [p]}V(F_i)\cup U$ and thus $|Y|\le m p+D\leq mq+D$.
We now complete the perfect $F$-packing by absorption. Since $\bfi_{\cP}(Y)\in L_{\cP,F}^{\mu}(H)$, we have the following equation
\[
\bfi_{\cP}(Y) = \sum_{\bfv\in I_{\cP,F}^{\mu}(H)} a_{\bfv} \bfv,
\]
where $a_{\bfv}\in \mathbb{Z}$ for all $\bfv\in I_{\cP,F}^{\mu}(H)$. 
Since $|Y|\le qm+D$, by the definition of $C'$, we have $|a_{\bfv}|\le C'$ for all $\bfv\in I_{\cP,F}^{\mu}(H)$.
Noticing that $a_{\bfv}$ may be negative, we can assume $a_{\bfv}=b_{\bfv} - c_{\bfv}$ such that one of $b_{\bfv}, c_{\bfv}$ is $|a_{\bfv}|$ and the other is zero for all $\bfv\in I_{\cP,F}^{\mu}(H)$. So we have
\[
\sum_{\bfv\in I_{\cP,F}^{\mu}(H)} c_{\bfv} \bfv + \bfi_{\cP}(Y) = \sum_{\bfv\in I_{\cP,F}^{\mu}(H)} b_{\bfv} \bfv.
\]
This equation means that given any family $\F$ consisting of disjoint $\sum_{\bfv}c_{\bfv}$ $m$-sets $W_1^{\bfv},\dots, W_{c_{\bfv}}^{\bfv}\subseteq V(H)\setminus Y$ for $\bfv\in I_{\cP,F}^{\mu}(H)$ such that $\bfi_{\cP}(W_{i}^{\bfv})=\bfv$ for all $i\in [c_{\bfv}]$, we can regard $V(\F)\cup Y$ as the union of $b_{\bfv}$ $m$-sets $S_1^{\bfv},\dots, S_{b_{\bfv}}^{\bfv}$ such that  $\bfi_{\cP}(S_j^{\bfv})=\bfv$, $j\in [b_{\bfv}]$ for all $\bfv\in I_{\cP,F}^{\mu}(H)$.
Since $c_{\bfv}\le C'$ for all $\bfv$ and $V(M_2)\cap Y=\emptyset$, we can choose the family $\F$ as a subset of $M_2$. 
In summary, starting with the $F$-packing $M_0\cup M_1\cup M_2\cup M_3$ leaving $U$ uncovered, we delete the copies $F_1, \dots, F_{\ell}$ of $F$ from $M_{0}\cup M_3$ given by Claim~\ref{clm:35} and then leave $Y=\bigcup_{i\in [p]}V(F_i)\cup U$ uncovered.
Then we delete the family $\F$ of copies of $F$ from $M_2$ and leave $V(\F)\cup Y$ uncovered.
Finally, we regard $V(\F)\cup Y$ as the union of at most $\binom{m+d-1}{d}C'+qm+D\le \sqrt{\log n}/2$ $m$-sets $S$ with $\bfi_{\cP}(S)\in I_{\cP,F}^{\mu}(H)$. 

Note that by definition, $Y$ may intersect at most $qm+D$ absorbing sets in $\F_{0}$, which cannot be used to absorb those sets we obtained above.
Since each $m$-set $S$ has at least $\sqrt{\log n} - qm>\sqrt{\log n}/2 + qm+D$ absorbing $(F,t m^2)$-sets in $\F_{0}$, we can greedily match each $S$ with a distinct absorbing $(F,t m^2)$-set $F_S\in \F_{0}$ for $S$.
Replacing the $F$-packing on $V(F_S)$ in $M_0$ by the perfect $F$-packing on $H[F_S\cup S]$ for each $S$ gives a perfect $F$-packing in $H$.
\end{proof}


\section{Useful tools}\label{sec6}
In this section we collect together some results that will be used in our applications of Theorem~\ref{genthm}.
When considering $\ell$-degree together with $\ell'$-degree for some $\ell'\neq \ell$, the following proposition is very useful (the proof is a standard counting argument, which we omit).

\begin{prop} \label{prop:deg}
Let $0\le \ell \le \ell ' < k$ and $H$ be a $k$-graph. If $\d_{\ell '}(H)\geq x\binom{n- \ell '}{k- \ell '}$ for some $0\le x\le 1$, then $\d_{\ell}(H)\geq x\binom{n- \ell}{k- \ell}$.
\end{prop}

For the statements of the next three results, recall the definitions introduced in Section~\ref{sub23}.
Moreover, for any $S\subseteq V(H)$, let $N(S):=\{T\subseteq V(H)\setminus S: T\cup S\in E(H)\}$, and for simplicity, we write $N(x)$ for $N(\{x\})$.

\begin{lemma}[\cite{LM1}, Lemma 4.2]\label{LM4.2}
Let $k,m \ge 2$ be integers and $\r > 0$. Let $K$ be a $k$-partite $k$-graph of order $m$. There exists $0<\a \ll \r$ such that the following holds for sufficiently large $n$. For any $k$-graph $H$ of order $n$, two vertices $x, y \in V(H)$ are $(K, \a, 1)$-reachable to each other if the number of $(k-1)$-sets $S \in N(x) \cap N(y)$ with $|N(S)| \ge \r n$ is at least $\r^2  {n \choose k-1}$.
\end{lemma}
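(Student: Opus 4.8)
\textbf{Proof plan for Lemma~\ref{LM4.2}.}

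The plan is to show that two vertices $x,y$ are $(K,\a,1)$-reachable by constructing many reachable $(m-1)$-sets. Recall that being $(K,\a,1)$-reachable means there are at least $\a n^{m-1}$ sets $S$ of size $m-1$ such that both $H[S\cup\{x\}]$ and $H[S\cup\{y\}]$ contain perfect $K$-packings; since $|K|=m$, each such $H[S\cup\{x\}]$ must be a single copy of $K$. So the goal is really to find $\a n^{m-1}$ many $(m-1)$-sets $T$ so that both $T\cup\{x\}$ and $T\cup\{y\}$ span a copy of $K$ in $H$.

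The key idea is to exploit the large common link: by hypothesis there are at least $\r^2\binom{n}{k-1}$ many $(k-1)$-sets $S\in N(x)\cap N(y)$ with $|N(S)|\ge \r n$. Fix a $k$-partite realisation of $K$, say with classes $W_1,\dots,W_k$ where $|W_1|\le\cdots\le|W_k|$. The plan is to build the copy of $K$ on $\{x\}\cup T$ so that $x$ plays the role of one vertex in some class, and similarly $y$ on $\{y\}\cup T$, while the remaining $m-1$ vertices $T$ are shared. First I would pick one such good $(k-1)$-set $S=\{s_1,\dots,s_{k-1}\}$; since $S\cup\{x\}$ and $S\cup\{y\}$ are both edges of $H$, I can try to use $S$ together with $x$ (resp. $y$) as the "seed" edge of the copy of $K$, with $s_i$ in class $W_{i}$ for $i\in[k-1]$ and $x$ (resp. $y$) in class $W_k$. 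Then I would extend greedily: the remaining $m-k$ vertices of $K$ must be added one at a time, each new vertex $v$ in some class $W_j$ needing to be joined (in the appropriate way) to the already-chosen vertices. The crucial point is that because $|N(S)|\ge \r n$ and, more generally, by iterating a counting argument using $\d$-degree-type bounds that follow from the $\r n$ lower bound, at each of the constantly many extension steps there are at least (roughly) $\r' n$ valid choices for the next vertex, for some $\r'$ depending only on $\r$ and $m$. Multiplying the number of choices across all steps — the number of good $S$ ($\gtrsim \r^2 n^{k-1}$) times the number of ways to extend both copies simultaneously ($\gtrsim (\r')^{m-k} n^{2(m-k)}$ if the two copies share only $S$, or we arrange overlap to land in $n^{m-1}$) — and dividing by the overcounting factor (a constant depending on $m$), yields at least $\a n^{m-1}$ reachable $(m-1)$-sets for a suitable $\a=\a(\r,m)\ll\r$.

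The main obstacle I anticipate is bookkeeping the overlap correctly: I need the shared set $T$ to have size exactly $m-1$, so the two copies of $K$ (one on $\{x\}\cup T$, one on $\{y\}\cup T$) must differ in exactly the single vertex $x$ versus $y$. This forces $x$ and $y$ to occupy the \emph{same} role in the \emph{same} $k$-partite realisation of $K$, and all other $m-1$ vertices to be common. So the extension argument must be run once, building a single set $T$ of size $m-1$ such that simultaneously $T\cup\{x\}\cong K$ and $T\cup\{y\}\cong K$; at each extension step a candidate vertex $v$ must satisfy the adjacency requirements with respect to \emph{both} partial copies. Controlling that the number of such simultaneously-valid choices stays linear in $n$ is where the hypothesis $|N(S)|\ge \r n$ for $\gtrsim \r^2\binom{n}{k-1}$ sets $S$ is used: it guarantees that the "codegree into the common neighbourhood" is large enough at the seed step, and a standard averaging/counting argument propagates a (weaker but still linear) bound through the remaining constantly many steps. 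Once the per-step linear lower bound is in hand, the final product bound and the choice $\a\ll\r$ are routine.
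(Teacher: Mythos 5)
This lemma is cited from Lo and Markstr\"om \cite{LM1} rather than proved in this paper, so there is no in-paper proof to compare against; I evaluate your plan on its own terms and against the argument in \cite{LM1}. You correctly identify the reduction (find many $(m-1)$-sets $T$ with $T\cup\{x\}$ and $T\cup\{y\}$ each spanning a copy of $K$) and the right ingredients (the $\gtrsim\r^2\binom{n}{k-1}$ good $(k-1)$-sets $S$ with $|N(S)|\ge\r n$). But the greedy vertex-by-vertex extension in the middle has a genuine gap. Once the seed $S$ and $x$ (or $y$) are placed, adding a new vertex $v$ into class $W_j$ requires $v\in N(R)$ for several $(k-1)$-sets $R$ that are transversals of the \emph{partial} structure built so far; these $R$ typically contain newly-chosen vertices and possibly $x$ or $y$, not just the seed $S$, and the hypothesis gives no lower bound whatsoever on $|N(R)|$ for any such $R$ (there is no minimum-codegree assumption on $H$). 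Concretely, for $K=K^{(3)}(2,2,2)$ with seed $S=\{s_1,s_2\}$ and $v_3\in N(S)$ already chosen, placing a new vertex $v_1$ into the first class requires $v_1\in N(\{s_2,x\})\cap N(\{s_2,v_3\})$, and neither codegree is controlled. So the assertion that ``$\r' n$ valid choices at each step'' follows from ``$\d$-degree-type bounds that follow from the $\r n$ lower bound'' is not justified; no such per-step bound is available, and the ``standard averaging propagates it'' claim does not hold in the absence of any global degree condition.

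The actual argument side-steps this by finding all $m-1$ extension vertices \emph{simultaneously} via supersaturation on an auxiliary hypergraph, rather than one at a time. Fix a $k$-partite realisation of $K$ with class sizes $a_1\le\cdots\le a_k$, note $K\subseteq K^{(k)}(a_1,\dots,a_k)$. Take a suitable partition $V_1,\dots,V_k$ of $V(H)\setminus\{x,y\}$ and define a $k$-partite $k$-graph $H'$ on $V_1,\dots,V_k$ by declaring $\{v_1,\dots,v_k\}$ (with $v_i\in V_i$) an edge of $H'$ iff $\{v_1,\dots,v_{k-1}\}$ is a good $(k-1)$-set (in $N(x)\cap N(y)$ with $|N(\cdot)|\ge\r n$) and $\{v_1,\dots,v_k\}\in E(H)$. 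The hypothesis forces $e(H')\gtrsim\r^3 n^k$, so Erd\H{o}s supersaturation yields $\gtrsim\mu n^{m-1}$ copies of $K^{(k)}(a_1,\dots,a_{k-1},a_k-1)$ in $H'$. For each such copy with classes $U_1,\dots,U_k$, every transversal of $U_1,\dots,U_{k-1}$ is automatically a good set, hence lies in $N(x)\cap N(y)$; thus adjoining $x$ (resp.\ $y$) to the $k$-th class gives a copy of $K^{(k)}(a_1,\dots,a_k)\supseteq K$, so $U_1\cup\cdots\cup U_k$ is a reachable $(m-1)$-set. This single global count is what delivers the $\a n^{m-1}$ bound, and it is the key structural idea your plan is missing: the auxiliary hypergraph $H'$ encodes exactly the constraints that a greedy extension would have to verify step by step, and supersaturation handles them all at once.
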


The following lemma gives us a sufficient condition for ensuring a partition $\cP = \{V_1, \dots, V_r\}$ of a $k$-graph $H$ such that for any $i\in [r]$, $V_i$ is $(F, \beta, 2^{c-1})$-closed in $H$.

\begin{lemma}\label{lem:P}
Given $\delta'>0$, integers $c, k,m \ge 2$ and $0<\a \ll 1/c, \delta', 1/m$, there exists a constant $\beta>0$ such that the following holds for all sufficiently large $n$. 
Let $F$ be an $m$-vertex $k$-graph.
Assume $H$ is an $n$-vertex $k$-graph  and $S\subseteq V(H)$ is such that $|\tilde{N}_{F, \a, 1}(v,H)\cap S| \ge \delta' n$ for any $v\in S$. Further, suppose every set of $c+1$ vertices in $S$ contains two vertices that are $(F, \a, 1)$-reachable in $H$. 
Then in time $O(n^{2^{c-1} m+1})$ we can find a partition $\cP$ of $S$ into $V_1,\dots, V_r$ with $r\le \min\{c, 1/\delta' \}$ such that for any $i\in [r]$, $|V_i|\ge (\delta' - \a) n$ and $V_i$ is $(F, \beta, 2^{c-1})$-closed in $H$.
\end{lemma}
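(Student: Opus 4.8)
The plan is to build the partition $\cP$ by a two-stage process: first cluster the vertices of $S$ according to the reachability relation $(F,\a,1)$, then upgrade $1$-reachability inside each cluster to $2^{c-1}$-closedness by iterating the reachability relation. For the first stage, I would consider the auxiliary (non-transitive) relation on $S$ given by $(F,\a,1)$-reachability. The hypothesis that every $(c+1)$-set of vertices in $S$ contains a reachable pair says precisely that this relation has no independent set of size $c+1$; equivalently, the complement graph has independence number $\le c$, so $S$ can be covered by at most $c$ cliques in the complement — but that is not quite what we want. Instead I would argue directly: greedily pick a maximal collection of vertices $v_1,\dots,v_r$ that are pairwise \emph{not} $(F,\a,1)$-reachable; by the $(c+1)$-set hypothesis $r\le c$, and every other vertex of $S$ is $(F,\a,1)$-reachable to some $v_j$. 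Define $V_j'$ to be the set of vertices reachable to $v_j$ (breaking ties arbitrarily). This gives a partition into $r\le c$ parts, and $r\le 1/\d'$ follows from the size bound below. The size bound $|V_j'|\ge (\d'-\a)n$ will come from the hypothesis $|\tilde N_{F,\a,1}(v,H)\cap S|\ge \d' n$ for $v\in V_j'$ together with the fact that (using Lemma~\ref{LM4.2}-type counting) $(F,\a,1)$-reachability is "almost transitive": most of $\tilde N_{F,\a,1}(v,H)$ lands in $v$'s own part, so each part inherits size $\ge(\d'-\a)n$.

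For the second stage I would boost closedness by composing reachable sets. The key elementary observation is that if $u,v$ are $(F,\b_1,i)$-reachable and $v,w$ are $(F,\b_1,i)$-reachable, then $u,w$ are $(F,\b_2,2i)$-reachable for a suitable $\b_2\ll\b_1$: for at least $\b_1 n^{im-1}$ sets $T$ we have perfect $F$-packings in $H[T\cup\{u\}]$ and $H[T\cup\{v\}]$, and similarly at least $\b_1 n^{im-1}$ sets $T'$ for the pair $v,w$; combining a set $T$, a copy of $F$ on $T'\cup\{v\}$-type configuration, and a set $T'$ avoiding $T$ produces, after a standard disjointness/counting argument (we only lose a lower-order number of choices to overlaps), at least $\b_2 n^{2im-1}$ reachable $(2im-1)$-sets for $u,w$. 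Starting from $(F,\a,1)$-reachability inside each $V_j'$ and iterating this doubling $c-1$ times yields $(F,\b,2^{c-1})$-closedness for an appropriate $\b=\b(\a,c,m)>0$; here $2^{c-1}$ is chosen so that it dominates every $2^{i}$ with $i\le c-1$, and one absorbs the constantly many intermediate exponents by taking $i$-reachability for the smaller $i$ up to the common value $2^{c-1}$ (padding a reachable $(im-1)$-set to a $(2^{c-1}m-1)$-set with arbitrary extra vertices only costs a constant-power factor of $n$, so stays within the $(F,\b,2^{c-1})$ regime). This is routine once the doubling lemma is in hand.

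Finally, the running-time claim: the partition in stage one is computed by, for each pair $x,y\in S$, testing $(F,\a,1)$-reachability, which amounts to counting $(m-1)$-sets $T$ with perfect $F$-packings in $H[T\cup\{x\}]$ and $H[T\cup\{y\}]$ — this is $O(n^{m-1})$ work per pair after enumerating the $O(n^{m})$ relevant configurations, and the whole first stage is polynomial of degree $O(m)$ in $n$. The dominant cost is verifying $2^{c-1}$-closedness: for a candidate pair one must certify $\b n^{2^{c-1}m-1}$ reachable $(2^{c-1}m-1)$-sets, which naively requires iterating over such sets, giving the stated bound $O(n^{2^{c-1}m+1})$ (the "$+1$" absorbing the outer loop over pairs of vertices). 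I do not expect to re-derive the reachability doubling from scratch if a suitable statement is already available, but if not, the main obstacle — and the step I would spend the most care on — is exactly this doubling/composition argument, specifically making the disjointness bookkeeping clean so that the constant $\b$ depends only on $\a, c, m$ and not on $n$, and ensuring the greedy tie-breaking in stage one does not destroy the size lower bound $(\d'-\a)n$ on each part. Everything else (the $(c+1)$-set hypothesis giving $r\le c$, the degree hypothesis giving $r\le 1/\d'$ and the part sizes, the bound $O(n^{2^{c-1}m+1})$) is bookkeeping on top of these two points.
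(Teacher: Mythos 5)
Your high-level template (pick a maximal antichain of centers, partition by reachability, upgrade the level of reachability) is the right shape, but both of its stages contain genuine gaps, and the paper's proof of Lemma~\ref{lem:P} goes a different and more careful route that is essential here.

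The most fundamental problem is your ``doubling lemma.'' If $u,v$ and $v,w$ are each $(F,\b_1,i)$-reachable, the only reachable $(2im-1)$-sets for $u,w$ that your argument produces have the form $T\cup\{v\}\cup T'$, where $T$ and $T'$ are reachable $(im-1)$-sets for $(u,v)$ and $(v,w)$ respectively. Fixing the single intermediate vertex $v$, this yields at most on the order of $n^{im-1}\cdot n^{im-1} = n^{2im-2}$ such sets --- off by a full factor of $n$ from the required $\b_2 n^{2im-1}$. To recover that missing factor of $n$ you need \emph{many} intermediate vertices reachable to both $u$ and $w$, i.e.\ $|\tilde N(u)\cap\tilde N(w)|\ge \e n$; this is precisely the overlap argument the paper uses in its fact (ii). So composition through a single center $v_j$ does not establish $(F,\b,2i)$-reachability, and Stage~2 as written does not prove any part is $2^{c-1}$-closed.

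Stage~1 has a related problem. You pick centers $v_1,\dots,v_r$ that are pairwise non-$(F,\a,1)$-reachable, and you assert that the 1-reachable neighborhoods $\tilde N_{F,\a,1}(v_i,H)$ overlap little so that each part has size $\ge(\d'-\a)n$. But non-$1$-reachability of $v_i$ and $v_j$ gives no control on $|\tilde N_{F,\a,1}(v_i)\cap\tilde N_{F,\a,1}(v_j)|$: a large overlap would only show $v_i,v_j$ are $2$-reachable, which is not a contradiction. Combined with arbitrary tie-breaking, a part can be shrunk to essentially a single vertex. ``Almost transitivity'' of $(F,\a,1)$-reachability is not something Lemma~\ref{LM4.2} provides and is false in general.

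The paper resolves both problems at once with a self-referential choice of the anchor set: it lets $r$ be the \emph{largest} integer for which there exist $v_1,\dots,v_r\in S$ pairwise not $2^{c+1-r}$-reachable (with the nested parameters $\b_{c-1}\ll\cdots\ll\b_0$ and Proposition~\ref{21} guaranteeing $\tilde N_i(v)\subseteq\tilde N_{i+1}(v)$). Then non-$2^{c+1-r}$-reachability implies non-$1$-reachability, so $r\le c$; a large overlap $|\tilde N_{c-r}(v_i)\cap\tilde N_{c-r}(v_j)|\ge\e n$ would give $\ge\e n$ intermediate vertices and hence $2^{c+1-r}$-reachability of $v_i,v_j$, contradicting the choice --- this yields the size bounds and $r\le 1/\d'$; and each part $U_i$ is shown $2^{c-r}$-closed not by composition but by a maximality argument: if $u_1,u_2\in U_i$ were not $2^{c-r}$-reachable, then $\{u_1,u_2\}\cup(\{v_1,\dots,v_r\}\setminus\{v_i\})$ would be $r+1$ pairwise non-$2^{c+1-(r+1)}$-reachable vertices, contradicting the choice of $r$. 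This is the key structural idea your proposal is missing; without it neither the part-size lower bound nor the closedness at level $2^{c-1}$ can be salvaged.
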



We will use the following simple result in the proof of Lemma~\ref{lem:P}. 
\begin{proposition}\cite[Proposition 2.1]{LM1}\label{21}
Let $F$ be a fixed $k$-graph on $m$ vertices.
For $\e, \beta>0$ and an integer $i\ge 1$, there exists a $\beta_0=\beta _0 (\e,\beta,m,i)>0$ and an integer $n_0=n_0(\e,\beta,m,i)$ satisfying the following. Suppose $H$ is a $k$-graph of order $n\ge n_0$ and there exists a vertex $x\in V(H)$ with $|\tilde{N}_{F,\beta, i}(x,H)|\ge \e n$. Then for all $0<\beta'\le \beta_0$, $\tilde{N}_{F,\beta,i}(x,H)\subseteq \tilde{N}_{F,\beta',i+1}(x,H)$.
\end{proposition}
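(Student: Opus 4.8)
\textbf{Plan for the proof of Proposition~\ref{21}.}
The statement asserts that reachability with a fixed amount of ``room'' ($i$ copies' worth of auxiliary vertices) can be upgraded, at the cost of a smaller reachability constant, to reachability with one extra copy's worth of room. So the plan is to take a reachable $(im-1)$-set $S$ for the pair $x,y$ (witnessing $(F,\beta,i)$-reachability) and pad it with $m$ further vertices that span a copy of $F$ together with suitably many valid extensions, turning it into a reachable $((i+1)m-1)$-set. The key point is that since $|\tilde N_{F,\beta,i}(x,H)|\ge \eps n$, there are $\ge \eps n$ vertices $y$ we need to handle, and for each such $y$ there are $\ge \beta n^{im-1}$ reachable $(im-1)$-sets $S$; we must show that ``most'' of these can be extended.

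First I would fix $y\in\tilde N_{F,\beta,i}(x,H)$ and let $\mathcal{S}$ be the family of reachable $(im-1)$-sets for $x$ and $y$, so $|\mathcal{S}|\ge\beta n^{im-1}$. For each $S\in\mathcal{S}$, both $H[S\cup\{x\}]$ and $H[S\cup\{y\}]$ have perfect $F$-packings. Now I want to produce an $((i+1)m-1)$-set $S'\supseteq S$ that is reachable: the natural choice is $S'=S\cup W$ where $W$ is an $m$-set, disjoint from $S\cup\{x,y\}$, such that $H[W]$ contains a copy of $F$. Then $H[S'\cup\{x\}]=H[S\cup W\cup\{x\}]$ has a perfect $F$-packing (the packing of $S\cup\{x\}$ together with the copy of $F$ on $W$), and likewise for $y$; hence $S'$ is a reachable $((i+1)m-1)$-set for $x$ and $y$. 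The only thing to check is that we have enough such $S'$: the number of $m$-sets $W$ spanning a copy of $F$ and disjoint from a given bounded set is at least (number of copies of $F$ in $H$)$\,\ge$ some fixed fraction, but this could be zero if $H$ has no copy of $F$ at all. However, if $H$ has no copy of $F$, then no two vertices are $(F,\beta,i)$-reachable for $i\ge 1$ (a perfect $F$-packing on $S\cup\{x\}$ with $|S\cup\{x\}|=im\ge m$ contains a copy of $F$), so $\tilde N_{F,\beta,i}(x,H)=\emptyset$, contradicting $|\tilde N_{F,\beta,i}(x,H)|\ge\eps n$. Thus $H$ contains at least one copy of $F$, and in fact, taking $S\cup\{x\}$ for any $S\in\mathcal S$ gives a copy of $F$; more robustly, I can simply fix one copy $F_0$ of $F$ in $H$ on vertex set $W_0$ and use $W=W_0$ whenever $W_0\cap(S\cup\{x,y\})=\emptyset$. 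The number of $S\in\mathcal S$ meeting $W_0$ is at most $m\cdot n^{im-2}=o(n^{im-1})$, so at least $\tfrac{\beta}{2}n^{im-1}$ sets $S\in\mathcal S$ are disjoint from $W_0$, each yielding a distinct reachable $((i+1)m-1)$-set $S\cup W_0$. Hence $x$ and $y$ are $(F,\beta/2,i+1)$-reachable.

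To get the flexibility of an arbitrary small constant $0<\beta'\le\beta_0$ rather than exactly $\beta/2$, I set $\beta_0:=\beta/2$; then for any $\beta'\le\beta_0$ the count $\tfrac{\beta}{2}n^{(i+1)m-1}\ge\beta' n^{(i+1)m-1}$ gives $(F,\beta',i+1)$-reachability. Since $y\in\tilde N_{F,\beta,i}(x,H)$ was arbitrary, this shows $\tilde N_{F,\beta,i}(x,H)\subseteq\tilde N_{F,\beta',i+1}(x,H)$, which is the claim. I would take $n_0$ large enough that $m\cdot n^{im-2}\le \tfrac{\beta}{2}n^{im-1}$, i.e. $n\ge 2m/\beta$.

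The only mild subtlety — and the one place where a careless argument could fail — is the degenerate case where $H$ contains no copy of $F$, handled above by observing that $\tilde N_{F,\beta,i}(x,H)$ is then empty and the hypothesis is vacuous; once a single copy $F_0$ is fixed, the rest is a routine disjointness count. So I do not expect any real obstacle here; the proof is short and the bookkeeping of constants is straightforward.
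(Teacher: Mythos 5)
There is a genuine gap, and it is quantitative rather than conceptual. (Note the paper itself only cites this statement from \cite{LM1} without proof, so your argument is judged on its own merits.) Your padding idea --- extend a reachable $(im-1)$-set $S$ for $x,y$ by an $m$-set $W$ spanning a copy of $F$, disjoint from $S\cup\{x,y\}$ --- is the right one, but your execution fails at the counting step. By fixing a \emph{single} copy $F_0$ on $W_0$ you produce at most $|\mathcal S|\le n^{im-1}$ extended sets $S\cup W_0$, whereas $(F,\beta',i+1)$-reachability requires at least $\beta' n^{(i+1)m-1}$ reachable $((i+1)m-1)$-sets; you are short by a factor of $n^m$, which no choice of the \emph{constant} $\beta_0$ can absorb. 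The sentence ``the count $\tfrac{\beta}{2}n^{(i+1)m-1}\ge\beta' n^{(i+1)m-1}$'' asserts a bound you never established --- what you actually produced is $\tfrac{\beta}{2}n^{im-1}$ sets. A telltale sign of the problem is that your $\beta_0=\beta/2$ does not depend on $\e$, while the statement explicitly has $\beta_0=\beta_0(\e,\beta,m,i)$: the hypothesis $|\tilde N_{F,\beta,i}(x,H)|\ge \e n$ must be used for more than just ruling out the degenerate ``no copy of $F$'' case.

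The repair is to show that $H$ contains $\Omega_{\e,\beta,m,i}(n^m)$ distinct $m$-sets spanning copies of $F$, and to let $W$ range over all of them. Indeed, there are at least $\e n\cdot \beta n^{im-1}$ pairs $(z,S)$ with $z\in \tilde N_{F,\beta,i}(x,H)$ and $S$ a reachable set for $x,z$; each yields an $im$-set $S\cup\{z\}$ with a perfect $F$-packing, each such $im$-set arises from at most $im$ pairs, and each $m$-set spanning a copy of $F$ lies in at most $n^{(i-1)m}$ of these $im$-sets, giving at least $\tfrac{\e\beta}{im}n^m$ copies of $F$. Then for each of the $\ge\beta n^{im-1}$ reachable sets $S$ for $x,y$ and each of the $\ge \tfrac{\e\beta}{im}n^m-O(n^{m-1})$ such copies $W$ avoiding $S\cup\{x,y\}$, the set $S\cup W$ is a reachable $((i+1)m-1)$-set; dividing by the overcount $\binom{(i+1)m-1}{m}$ (each $T$ is counted once per choice of the $m$-subset playing the role of $W$) gives at least $\beta_0 n^{(i+1)m-1}$ reachable sets for a suitable $\beta_0(\e,\beta,m,i)>0$, and hence the claimed inclusion for all $0<\beta'\le\beta_0$. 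With this count inserted, the rest of your write-up (disjointness bookkeeping, choice of $n_0$) goes through.
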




%

Next we prove Lemma~\ref{lem:P}, whose proof is almost identical to the proof of \cite[Lemma 3.8]{Han14_poly}.

\begin{proof}[Proof of Lemma~\ref{lem:P}]
Let $\e:=\a/c$.
We choose constants satisfying the following hierarchy
\[
1/n\ll \beta = \beta_{c-1}\ll \beta_{c-2}\ll \cdots \ll \beta_1 \ll \beta_0 \ll \e \ll 1/c,\delta ',1/m.
\]

Let $F$ and $H$ be as in the statement of the lemma.
Throughout this proof, given $v\in V(H)$ and $i\in [c-1]$, we write $\tilde{N}_{F, \beta_i, 2^i}(v,H)$ as $\tilde N_{i}(v)$ for short. 
Note that for any $v\in V(H)$, $|\tilde N_0(v)|=|\tilde{N}_{F, \beta_0, 1}(v,H)|\ge |\tilde{N}_{F, \a, 1}(v,H)| \ge \delta' n$ because $\beta_0<\a$.
We also write $2^i$-reachable (or $2^i$-closed) for $(F, \beta_i, 2^i)$-reachable (or $(F, \beta_i, 2^i)$-closed). By Proposition \ref{21} and the choice of $\beta_i$s, we may assume that $\tilde {N}_{i}(v)\subseteq \tilde {N}_{i+1}(v)$ for all $0\le i<c-1$ and all $v\in V(H)$.
Hence, if $W\subseteq V(H)$ is $2^i$-closed in $H$ for some $i\le c-1$, then $W$ is $ 2^{c-1}$-closed.

We may assume that there are two vertices in $S$ that are not $2^{c-1}$-reachable to each other, as otherwise $S$ is $2^{c-1}$-closed in $H$ and we obtain the desired (trivial) partition $\cP=\{S\}$.
Let $r$ be the largest integer such that
there exist $v_1,\dots, v_{r}\in S$ such that no pair of them are $2^{c+1-r}$-reachable in $H$. 
Note that $r$ exists by our assumption and $2\le r\le c$. 
Fix such $v_1,\dots, v_{r}\in S$; by Proposition \ref{21}, we can assume that any pair of them are not $2^{c-r}$-reachable in $H$. Consider $\tilde N_{c-r}(v_i)$ for all $i\in [r]$. Then we have the following facts.
\begin{enumerate}[(i)]
\item Any $v\in S\setminus\{v_1,\dots, v_{r}\}$ must lie in $\tilde N_{c-r}(v_i)$ for some $i\in [r]$, as otherwise $v, v_1,\dots, v_{r}$ contradicts the definition of $r$. 

\item $|\tilde N_{c-r}(v_i)\cap \tilde N_{c-r}(v_j)|<\e n$ for any $i\neq j$. Indeed, otherwise there are at least
\[
\frac{\e n}{(2^{c+1-r}m-1)!} (\beta_{c-r}n^{2^{c-r}m-1} - n^{2^{c-r}m-2}) (\beta_{c-r}n^{2^{c-r}m-1} - 2^{c-r}m n^{2^{c-r}m-2})
\]
reachable $(2^{c+1-r}m-1)$-sets for $v_i, v_j$. This follows because there are at least $\e n$ vertices 
$w\in \tilde N_{c-r}(v_i)\cap \tilde N_{c-r}(v_j)$, at least $ \beta_{c-r}n^{2^{c-r}m-1} - n^{2^{c-r}m-2}\ $ reachable $(2^{c-r}m-1)$-sets $T$ for $v_i$ and $w$ that do not contain $v_j$, and at least $\beta_{c-r}n^{2^{c-r}m-1} - 2^{c-r}m n^{2^{c-r}m-2}\ $ reachable $(2^{c-r}m-1)$-sets for $v_j$ and $w$ that avoid $\{v_i\}\cup T$; finally, we divide by $(2^{c+1-r}m-1)!$ to eliminate the effect of over-counting. Since $\beta_{c+1-r}\ll \e, \beta_{c-r}, 1/c,1/m$, this gives at least $\beta_{c+1-r} n^{2^{c+1-r}m-1}$ reachable $(2^{c+1-r}m-1)$-sets for $v_i, v_j$, contradicting the assumption that $v_i, v_j$ are not $2^{c+1-r}$-reachable to each other.
\end{enumerate}
Note that (ii) and $|\tilde N_{c-r}(v_i)\cap S|\ge |\tilde{N}_{0}(v_i)\cap S| \ge \delta' n$ for $i\in [r]$ imply that $r\delta' n - \binom r2 \e n \le |S|\le n$. So we have $r\le (1+c^2 \e)/\delta'$. Since $\e \le \a \ll \delta',1/c$, we have $r\le 1/\delta'$ and thus, $r\le \min\{c, 1/\delta'\}$.

For $i\in [r]$, let $U_i:=((\tilde N_{c-r}(v_i)\cup \{v_i\})\cap S)\setminus \bigcup_{j\in [r]\setminus \{i\}} \tilde N_{c-r}(v_j)$. Note that for $i\in [r]$, $U_i$ is $2^{c-r}$-closed in $H$. Indeed, if there exist $u_1, u_2\in U_i$ that are not $2^{c-r}$-reachable to each other, then $\{u_1, u_2\}\cup (\{v_1,\dots, v_{r}\}\setminus\{v_{i}\})$ contradicts the definition of $r$.

Let $U_0 := S\setminus (U_1\cup\cdots \cup U_{r})$. By (i) and (ii), we have $|U_0|\le \binom{r}{2}\e n$. We will move each vertex of $U_0$ greedily to $U_i$ for some $i\in [r]$. For any $v\in U_0$, since $|(\tilde N_0(v)\cap S)\setminus U_0|\ge \delta' n - |U_0|\ge r\e n$, there exists $i\in [r]$ such that $v$ is 1-reachable to at least $\e n$ vertices in $U_i$. In this case we add $v$ to $U_i$ (we add $v$ to an arbitrary $U_i$ if there are more than one such $i$). 
Let the resulting partition of $S$ be $V_1,\dots, V_{r}$. Note that we have $|V_i|\ge |U_i|\ge |\tilde N_{c-r}(v_i)\cap S| - r \e n\ge |\tilde N_0 (v_i)\cap S| - c\e n \ge (\delta' - \a )n$. Observe that in each $V_i$, the `farthest' possible pairs are those two vertices both from $U_0$, which are $2^{c-r+1}$-reachable to each other. Thus, each $V_i$ is $2^{c-r+1}$-closed, so $2^{c-1}$-closed because $r\ge 2$. 

We estimate the running time as follows.
First, for every two vertices $u, v\in S$, we determine if they are $2^{i}$-reachable for $0\le i\le c-1$.
This can be done by testing if any $(2^i m -1)$-set $T\in \binom{V(H)\setminus \{u,v\}}{2^i m -1}$ is a reachable set for $u$ and $v$, namely, if both $H[T\cup \{u\}]$ and $H[T\cup \{v\}]$ have perfect $F$-packings or not, which can be checked by listing the edges on them, in constant time. 
If there are at least $\beta_{i} n^{2^{i}m-1}$ reachable $(2^{i}m-1)$-sets for $u$ and $v$, then they are $2^{i}$-reachable. 
Since we need time $O(n^{2^{c-1}m-1})$ to list all $(2^{c-1}m-1)$-sets for each pair $u, v$ of vertices, this can be done in time $O(n^{2^{c-1}m+1})$. 
Second, we search the set of vertices $v_1,\dots, v_r$ such that no pair of them are $2^{c+1-r}$-reachable for all $2\le r\le c$. With the reachability information at hand, this can be done in time $O(n^{c})$.
We then fix the largest $r$ as in the proof. If such $r$ does not exist, then we get $\cP=\{S\}$ and output $\cP$. Otherwise, we fix any $r$-set $v_1,\dots, v_r$ such that no pair of them are $2^{c+1-r}$-reachable. We find the partition $\{U_0, U_1, \dots, U_r\}$ by identifying $\tilde{N}_{c-r}(v_i)$ for $i\in [r]$, in time $O(n)$.
Finally we move vertices of $U_0$ to $U_1,\dots, U_r$, depending on $|\tilde{N}_0(v)\cap U_i|$ for $v\in U_0$ and $i\in [r]$, which can be done in time $O(n^2)$. Thus, the running time for finding a desired partition is $O(n^{2^{c-1}m+1})$.
\end{proof}

\section{Tools for Theorem~\ref{mainthm}}\label{sec7}
In the following section we prove Theorem~\ref{mainthm}. Here we collect together some useful notation and results for this proof.

Let $H$ be a $k$-graph.
In the case of perfect matchings (i.e. when $F$ is an edge) we write $(\beta, i)$-reachable, $( \beta, i)$-closed  and $\tilde{N}_{\b, i}(v,H)$ for $(F, \beta, i)$-reachable  $(F, \beta, i)$-closed and $\tilde{N}_{F,\b, i}(v,H)$ respectively.

%

The following result is a weaker version of Lemma 5.6 in~\cite{KOTo}.

\begin{lemma}\cite{KOTo} \label{lem:frac}
Let $k \geq 2$ and $1 \leq \ell \leq k-1$ be integers, and let $\eps >0$. Suppose that for some $b,c \in (0,1)$ and some $n_0 \in \mathbb N$, every $k$-graph $H$ on $n \geq n_0$ vertices with $\delta _{\ell} (H) \geq cn^{k-\ell}$ has a fractional matching of size $(b+\eps)n$. Then there exists an $n_0 ' \in \mathbb N$ such that any $k$-graph $H$ on $n \geq n'_0$ vertices with $\delta _{\ell} (H) \geq (c+ \eps) n^{k-\ell}$ contains a matching of size at least $bn$.
\end{lemma}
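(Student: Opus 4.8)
The statement to prove is Lemma~\ref{lem:frac}: if every $k$-graph $H$ on $n \geq n_0$ vertices with $\delta_\ell(H) \geq cn^{k-\ell}$ has a fractional matching of size $(b+\eps)n$, then for large $n$, every $k$-graph with $\delta_\ell(H) \geq (c+\eps)n^{k-\ell}$ has an (integer) matching of size at least $bn$.

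My plan for proving this:

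\medskip

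The plan is to convert a large fractional matching into a large integer matching by a ``cleaning'' or ``splitting'' argument that exploits the extra $\eps n^{k-\ell}$ of degree slack. First I would take the $k$-graph $H$ on $n$ vertices with $\delta_\ell(H) \geq (c+\eps)n^{k-\ell}$. The rough idea is to pass to a subhypergraph $H'$ on a random (or carefully chosen) subset $V'$ of the vertices — say of size $n' = \lceil n/t \rceil$ for a suitable large constant $t$, or more naturally to iterate: repeatedly find a fractional matching in the current hypergraph, extract from it a ``large'' integer matching covering a constant fraction of vertices, delete those vertices, and recurse on what remains. The key point enabling the recursion is that deleting a few vertices and passing to induced subhypergraphs only decreases the minimum $\ell$-degree by a controlled amount, so the hypothesis (with threshold $c$, and using the $\eps$ slack as a buffer) continues to apply as long as we have not deleted too much.

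\medskip

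The technical heart is the following step: given a fractional matching $w$ of size $(b+\eps)n$ in some induced subhypergraph on $n''$ vertices, produce an integer matching covering at least, say, $\eps' n''$ vertices for some fixed $\eps' > 0$ depending only on $k$ and $\eps$. One standard way: since the fractional matching has size at least $(b+\eps)n \geq \eps n''$ (when $n''$ is a constant fraction of $n$), the support of $w$, together with an averaging/LP-rounding or a greedy argument, yields many edges with positive weight; one can then apply a result on fractional-vs-integer matching gaps, or simply greedily pick a maximal matching $M$ inside the support and argue that if $M$ were small then the fractional matching would be confined to the neighbourhood of $V(M)$, which is too small — contradicting $\sum_e w(e) = (b+\eps)n$ being large. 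This gives $|M| \geq \eps' n''$. Delete $V(M)$, losing at most $k\eps' n''$ vertices, and repeat. After $O(1/\eps')$ rounds we have accumulated an integer matching; one must check the arithmetic so that the total matching size reaches $bn$ — this is where the difference between $b+\eps$ (fractional) and $b$ (integer target) gives the room to absorb the losses incurred at each rounding step and at the final small leftover piece.

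\medskip

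The main obstacle I anticipate is controlling the accounting precisely: each round of ``extract an integer matching from a fractional one and delete'' loses a multiplicative factor, and one must verify that the geometric sum of matched vertices across all rounds is at least $2bn$ (i.e.\ the matching has at least $bn$ edges), using only that the \emph{first} fractional matching had size $(b+\eps)n$ and that each subsequent subhypergraph still satisfies the degree hypothesis with constant $c$. One has to be careful that after deleting a constant fraction of vertices, $\delta_\ell$ of the induced subhypergraph on $m$ vertices is still at least $cm^{k-\ell}$: passing from $H$ to an induced subhypergraph on a set $V'$ of size $m$ chosen to contain most of the high-degree structure (or via a short averaging argument over all $m$-subsets), the degree drops from $(c+\eps)n^{k-\ell}$ to roughly $(c+\eps)m^{k-\ell} - (\text{small error})$, which exceeds $cm^{k-\ell}$ provided $m$ is not too small — so the recursion can only be run while the remaining vertex count stays above some $\theta n$, and the final $\theta n$ vertices are handled by noting $bn - (\text{matched so far})$ is already small enough, or by choosing $\theta$ small and absorbing this into the $\eps$ slack. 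Since the excerpt permits citing ``any result stated earlier,'' and no sharper tool is available, I would in fact prefer the cleanest route: cite the well-known fact (or reprove in two lines) that a $k$-graph with a fractional matching of size $\nu^*$ has an integer matching of size at least $\nu^*/k$ is too weak — instead use the random subsampling trick of Alon--Frankl--Huang--R\"odl--Ruci\'nski--Sudakov, where one takes a uniformly random subset $V'$ of size $t$ ($t$ a large constant), observes that the induced subhypergraph $H[V']$ has a fractional matching of size close to $(b+\eps)t/n \cdot n = \ldots$ in expectation hence a large one with positive probability, and then a \emph{constant-size} fractional matching rounds to an integer matching of size $\geq bt/k$-ish with only $O(1)$ loss; tiling $V(H)$ by $n/t$ such random blocks and taking one integer matching per block gives the bound. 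The delicate part there is showing the induced subhypergraph on the random block inherits the degree condition \emph{and} a fractional matching of the right size — again an expectation computation plus concentration — and that is the step I'd expect to spend the most care on.
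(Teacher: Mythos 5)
Your overall framework --- random subsampling into constant-size blocks and checking each block inherits the degree hypothesis --- is the right starting point (and matches the strategy of~\cite{KOTo}), but the conversion from fractional to integer matching is exactly where your account breaks. You claim that a constant-size fractional matching ``rounds to an integer matching of size $\geq bt/k$-ish with only $O(1)$ loss,'' but the trivial conversion (take a maximal matching $M$ inside the support of $w$ and note that every supported edge meets $V(M)$) only gives $\nu(H[R]) \geq \nu^*(H[R])/k$, a \emph{multiplicative} loss of~$k$. Summing over the $n/t$ blocks then yields a matching of size roughly $(b+\eps)n/k$, which for $b$ near $1/k$ (the regime of interest) falls far short of $bn$. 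Your iterative variant has the analogous problem: a single pass extracts only about $\nu^*/k$ disjoint edges, and, as you yourself observe, the inherited condition $\delta_\ell(H') \geq c|H'|^{k-\ell}$ survives the deletion of only an $O(\eps)$-fraction of the vertices, so the recursion cannot run nearly long enough to make up the missing factor of~$k$.

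The missing ingredient is a fractional-to-integral bridge that loses only a lower-order term rather than a factor of $k$, namely a Frankl--R\"odl/Pippenger--Spencer nibble-type theorem: if $H$ has a fractional matching $w$ of size $s$ with $\sum_{e \supseteq \{u,v\}} w(e) = o(1)$ for every pair $u,v$, then $H$ has an \emph{integer} matching of size $(1-o(1))s$. To manufacture such a well-spread $w$, one does \emph{not} round each random block separately; instead, for every $t$-subset $R$ that inherits the degree condition, fix a fractional matching $w_R$ of size $\geq (b+\eps)t$ on $H[R]$ (and $w_R \equiv 0$ otherwise), and \emph{average}: set $w(e) := (n/t)\,\mathbb{E}_R[w_R(e)]$, the expectation over a uniform random $t$-subset $R$. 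The vertex constraints hold since $\Pr[v\in R] = t/n$; the total size is $(1-o(1))(b+\eps)n$ since almost all $R$ inherit the degree condition; and, crucially, $\sum_{e\supseteq\{u,v\}} w(e) \leq (n/t)\Pr[\{u,v\}\subseteq R] = O(t/n) = o(1)$. Feeding this $w$ into the nibble theorem produces the required integer matching of size at least $bn$. In short: your sampling skeleton is sound and the block-wise degree inheritance is routine, but the rounding step as you describe it loses a fatal factor of $k$; you need the averaging construction together with a nibble-type conversion in place of the trivial $\nu \geq \nu^*/k$ bound.
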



Note that $\delta(k, k-1, k)= c_{k,k-1}^*=1/k$ by the results in \cite{RRS09}.
The following theorem follows from \cite[Theorem 1.7]{Han15_mat} and \cite[Proposition 1.11]{Han15_mat} when $2\le \ell\le k-1$ and follows from the Strong Absorbing Lemma in~\cite[Lemma 2.4]{HPS} and the definition of $c_{k,\ell}^*$ when $\ell=1$.

\begin{theorem}\label{thm:alm_mat}
For $1\le \ell \le k-1$, $ \delta(k, \ell, k)\le \max\{1/3, c_{k,\ell}^*\}$.
\end{theorem}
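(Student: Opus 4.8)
The statement asserts that for $1\le \ell\le k-1$, an almost-perfect matching (covering all but at most $k$ vertices) is forced by a minimum $\ell$-degree condition with density $\max\{1/3, c^*_{k,\ell}\}$. The plan is to split into two cases according to which term in the maximum dominates, and in each case combine the fractional-to-integral transfer (Lemma~\ref{lem:frac}) with the appropriate known input. As noted in the excerpt, when $\ell=1$ the result is essentially immediate from the Strong Absorbing Lemma of~\cite[Lemma 2.4]{HPS} together with the definition of $c^*_{k,1}$: by definition of $c^*_{k,1}$, a minimum vertex-degree of $(c^*_{k,1}+o(1))\binom{n-1}{k-1}$ yields a perfect fractional matching, and the Strong Absorbing Lemma upgrades this to a matching missing at most a bounded number of vertices; one checks the leftover can be taken to be at most $k$. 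So the bulk of the work is the range $2\le \ell\le k-1$, where I would quote~\cite[Theorem 1.7]{Han15_mat} and~\cite[Proposition 1.11]{Han15_mat}.

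For $2\le \ell\le k-1$, the strategy is: first use that $\delta_\ell(H)\ge (\max\{1/3,c^*_{k,\ell}\}+\gamma)\binom{n-\ell}{k-\ell}$ forces (by the definition of $c^*_{k,\ell}$, possibly after invoking Proposition~\ref{prop:deg} to pass between degree levels) a perfect \emph{fractional} matching, or at least a fractional matching of size very close to $n/k$. Then apply Lemma~\ref{lem:frac} with $b$ slightly below $1/k$ to convert this into an integral matching of size at least $bn$, i.e. one leaving $o(n)$ vertices uncovered. The remaining step — reducing the uncovered set from $o(n)$ vertices down to at most $k$ vertices — is exactly what~\cite[Theorem 1.7]{Han15_mat} and the accompanying~\cite[Proposition 1.11]{Han15_mat} are designed to provide: these give an absorbing-type mechanism showing that under the stated $\ell$-degree hypothesis one can always find a matching covering all but a constant number of vertices, and Proposition~1.11 pins down that the constant can be taken to be $k$ (so that the leftover has size divisible appropriately / is genuinely at most $k$). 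I would state the hierarchy $1/n_0\ll \gamma$, fix the relevant constants, and then cite these results essentially verbatim, checking only that the hypotheses match: the density threshold $\max\{1/3,c^*_{k,\ell}\}$ is precisely the one appearing there.

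The step I expect to be the main obstacle — or at least the one requiring the most care rather than pure citation — is verifying that the two regimes $1/3$ and $c^*_{k,\ell}$ are stitched together correctly, i.e. that~\cite{Han15_mat} really does give the bound with the \emph{maximum} of the two quantities rather than, say, only handling the case $c^*_{k,\ell}\ge 1/3$ or only $c^*_{k,\ell}\le 1/3$. The quantity $1/3$ presumably arises from a space-barrier-type configuration forcing a near-perfect matching independently of the fractional threshold, while $c^*_{k,\ell}$ is the fractional-matching threshold; one needs both to conclude, and one needs to know the absorbing lemma is robust enough to operate just above whichever is larger. Assuming the cited results are stated in the form claimed, the proof of Theorem~\ref{thm:alm_mat} is then a short matter of: (a) dispatching $\ell=1$ via~\cite[Lemma 2.4]{HPS} and the definition of $c^*_{k,1}$; (b) for $2\le\ell\le k-1$, quoting~\cite[Theorem 1.7]{Han15_mat} and~\cite[Proposition 1.11]{Han15_mat}; and (c) recording that $\delta(k,k-1,k)=c^*_{k,k-1}=1/k$ follows from~\cite{RRS09}, which is consistent since $1/k\le 1/3$ forces the maximum to be $\max\{1/3,1/k\}=1/3$ only when $k\le 3$, whereas for $k\ge 3$ the R\"odl–Ruci\'nski–Szemer\'edi codegree result gives the sharper value $1/k$ directly; I would remark on this to reassure the reader there is no inconsistency.
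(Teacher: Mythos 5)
Your final summary, items (a) and (b), is precisely the paper's proof: for $\ell=1$ cite the Strong Absorbing Lemma \cite[Lemma 2.4]{HPS} together with the definition of $c^*_{k,1}$, and for $2\le\ell\le k-1$ cite \cite[Theorem 1.7]{Han15_mat} and \cite[Proposition 1.11]{Han15_mat}. The intermediate detour you sketch through Lemma~\ref{lem:frac} is not part of the paper's argument and reflects a slightly inaccurate reading of the cited results, which yield a matching leaving at most $k$ vertices uncovered directly from the $\ell$-degree hypothesis, rather than by first producing an $o(n)$-uncovered matching and then shrinking the leftover to constant size.
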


In fact, it is not hard to show that $\delta(k, \ell, k)= c_{k,\ell}^*$ for any $1\le \ell \le k-1$, but Theorem~\ref{thm:alm_mat} is enough for this paper.




\section{Proof of Theorem~\ref{mainthm}}\label{sec8}

Let $\delta \in (\delta ^*,1]$ and define
\[
0 < 1/n_0 \ll 1/c \ll  \mu \ll \beta \ll \alpha ' \ll \eta \ll \a \ll \gamma \ll (\delta-\delta ^*), 1/k.
\]
Let $H$ be as in the statement of Theorem~\ref{mainthm}. Note that we may assume  $n\geq n_0$ and $k\mid n$ since else the result is trivial (recall the use of big-$O$ notation in the statement of the theorem).
So
\begin{align}\label{eq1}
\delta _\ell (H) \geq (\delta ^*+\gamma ) \binom{n-\ell}{k-\ell} \geq (1/3+\gamma ) \binom{n-\ell}{k-\ell}
\end{align}
and in particular, by  Proposition~\ref{prop:deg},
\begin{align}\label{eq2}
\delta _1 (H) \geq (1/3+\gamma ) \binom{n-1}{k-1}.
\end{align}
Notice that by (\ref{eq2}),
\begin{itemize}
\item[($*$)] Every set of three vertices of $V(H)$ contains two vertices that are $(\a,1)$-reachable. 
\end{itemize}

Note that  when $\ell =1$, we are  just in a subcase of~\cite[Theorem 1.1]{AFHRRS};
 in this case we must have a perfect matching.
So we may assume that $\ell >1$.

We now split the argument into two cases.
\subsection{There exists $v\in V(H)$ such that $|\tilde{N}_{\a, 1}(v,H)| \le \eta  n$.}\label{sec41}
In this case, we will show that $H$ must contain a perfect matching. 
Let $W:=\{v\}\cup \tilde{N}_{\a, 1}(v,H)$ and thus $|W|\le \eta n+1$.
For any two vertices $u, u'\in V(H)\setminus W$, since $u, u'\notin \tilde{N}_{\a, 1}(v,H)$, by ($*$), $u$ and $u'$ are $(\a, 1)$-reachable, i.e., $V(H)\setminus W$ is $(\a, 1)$-closed in $H$.
Let $H_1:=H\setminus W$ and $n_1:=|H_1|$. Since $\eta \ll \alpha$ we have that $V(H)\setminus W=V(H_1)$ is  $(\a/2, 1)$-closed in $H_1$.

By Lemma~\ref{lem:abs} (with $d=1$) there is a set $T \subseteq V(H_1)$ (take $T:=V(\F_{abs})$) such that $|T|\leq c k^2\log n_1$ and both $H_1[T]$ and $H_1[T\cup S]$ contain perfect matchings for \emph{any}
set $S\subseteq V(H_1)$ where $|S|\in k\mathbb N$ and $|S|\leq \sqrt{\log n_1}$. 
We greedily construct a matching $M$ in $H$ such that $|M|\leq \eta n +1$; $W \subseteq V(M)$; and $V(M)\cap T=\emptyset$.
Let $H_2:=H\setminus (V(M)\cup T)$ and $n_2:=|H_2|$. Note that $H_2$ is a subgraph of $H_1$.
By (\ref{eq1}), the definition of $\delta ^ *$ and Theorem~\ref{thm:alm_mat},
$$\delta _\ell (H_2) \geq (\delta^*  +\gamma/2 ) \binom{n_2-\ell}{k-\ell} \geq (\delta(k,\ell,k) +\gamma/2 ) \binom{n_2-\ell}{k-\ell}.$$
Thus, by definition of $\delta(k,\ell,k)$, $H_2$ contains a matching $M_1$ covering all but at most $k$ vertices of $H_2$.
Let $S$ denote the leftover set of vertices. (So $S=\emptyset$ or $|S|=k$.) Then $H[T\cup S]$ contains a perfect matching $M_2$.
Altogether, $M\cup M_1 \cup M_2$ is a perfect matching in $H$, as desired.

\subsection{Every vertex $v \in V(H)$ satisfies $|\tilde{N}_{\a, 1}(v,H)| \ge \eta n$.}\label{sec42} 
Thus, since $\alpha' \ll \alpha$, every vertex $v \in V(H)$ satisfies $|\tilde{N}_{\a ', 1}(v,H)| \ge \eta n$.
Apply Lemma~\ref{lem:P} to $H$ (with $\alpha '$, $2$, $\eta$ playing the roles of $\alpha$, $c$ and $\delta '$ respectively) to
 find a partition $\cP$ of $V(H)$ into $V_1,\dots, V_r$ with $r\le 2$ such that for any $i\in [r]$, $|V_i|\ge \eta n/2$ and $V_i$ is $(\beta, 2)$-closed in $H$, in time $O(n^{2 k+1})$.


Our aim is to apply Theorem~\ref{genthm} to $H$.
First, by Theorem~\ref{thm:alm_mat} and \eqref{eq1}, we have that $\delta_{\ell}(H) \ge (\delta(k,\ell, k)+\r)\binom{n-\ell}{k-\ell}$.
Second, by definition, $\cP$ is an $(E, \beta, 2, \eta/2)$-good partition of $V(H)$, where $E$ is a $k$-graph  on $k$ vertices consisting of a single edge.

Write $L:=L^{\mu}_{\mathcal P,E}(H)$ and $Q:=Q(\mathcal P, L^{\mu}_{\mathcal P,E}(H))$.
We will show that $|Q|\leq k$.
Clearly, if $r=1$, then $|Q|=1$.
So we may assume $r=2$.
First assume that $I^{\mu}_{\mathcal P,E}(H)$ contains two distinct elements, say, $(a, b), (a', b')\in I^{\mu}_{\mathcal P,E}(H)$ with $a\neq a'$.
Thus $(a-a', b-b')=(a-a',a'-a)\in L^{\mu}_{\mathcal P,E}(H)$.
Any coset $(x,y)+L$ in $Q$ must contain some element $(x',y')$ so that $x'+y'=k$. Consider two 
vectors $(n_1, n_2), (n_1', n_2') \in L^2 _{max}$ where $n_1+n_2=n'_1+n'_2=k$. 
If $n_1\equiv n_1' \pmod{|a-a'|}$ then these two vectors lie in the same coset in $Q$. (Indeed, by adding a multiple of $(a-a',a'-a)$ to $(n_1, n_2)$ one can obtain $(n_1', n_2')$.)
Altogether this implies 
 there are at most $|a-a'|$ cosets, i.e., $|Q|\leq |a-a'|\le k$.

Second, assume that $I^{\mu}_{\mathcal P,E}(H)$ contains exactly one element, say $I^{\mu}_{\mathcal P,E}(H) = \{(a,b)\}$, where $a+b=k$.
Note that it must hold that $a\ge \ell$ and $b\ge \ell$.
Indeed, if $a<\ell$, then the number of edges that contain an $\ell$-set of index vector $(\ell, 0)$ is at most $2^k\mu n^k$.
Thus, by averaging and since $\mu \ll \eta \ll 1/k$, there exists an $\ell$-set $S$ of index vector $(\ell, 0)$ such that $d_H(S)\le \binom{k}{\ell}2^k \mu n^k/\binom{|V_1|}{\ell}\le \sqrt\mu n^{k-\ell} < \delta_{\ell}(H)$, a contradiction.
The same argument shows that $b\ge \ell$.
Then for $0\le \ell_1\le \ell$, consider the $\ell$-vectors $(\ell_1, \ell_2)$.
By averaging, for each $0\le \ell_1\le \ell$, there exists an $\ell$-set $S_{\ell_1}$ of index vector $(\ell_1, \ell_2)$ such that
\[
d_H(S_{\ell_1})\le \binom{|V_1|-\ell_1}{a-\ell_1}\binom{|V_2| - \ell_2}{b-\ell_2}+\frac{\binom{k}{\ell} 2^k \mu n^k}{\binom{|V_1|}{\ell_1}\binom{|V_2|}{\ell_2}} \le \binom{|V_1|}{a-\ell_1}\binom{|V_2|}{b-\ell_2} + \sqrt\mu n^{k-\ell}.
\]
Recall the identity $\sum_{0\le i\le t}\binom{n_1}{i}\binom{n_2}{t-i} = \binom{n_1+n_2}{t}$, so we have
\[
\sum_{0\le \ell_1\le \ell} d_H(S_{\ell_1}) \le \binom{n}{k-\ell} + k \sqrt\mu n^{k-\ell} \le \binom{n-\ell}{k-\ell} + 2k \sqrt\mu n^{k-\ell}.
\]
Since $\ell \ge 2$ and $a, b\ge \ell$, the above sum contains at least three terms.
As $\mu \ll \r \ll 1/k$, there exists $\ell_1$ such that $d_H(S_{\ell_1})\le \frac13\binom{n-\ell}{k-\ell} + 2k \sqrt\mu n^{k-\ell} < (\frac13+\r)\binom{n-\ell}{k-\ell}$, contradicting~\eqref{eq1}.
That is, the case when $I^{\mu}_{\mathcal P,E}(H)$ contains one element does not occur.

Therefore we can apply Theorem~\ref{genthm} to $H$ with $D=q=k$, $t=2$ and $c=\eta/2$ and thus conclude that $H$ contains a perfect matching if and only if $(\mathcal P, L^{\mu}_{\mathcal P,E}(H))$ is $k$-soluble.

{\bf The algorithm.}
Now we state our algorithm. First, for every two vertices $u, v\in V(H)$, we determine if they are $(\a, 1)$-reachable, which can be done by testing if any $(k-1)$-set is a reachable set in time $O(n^{k-1})$.
So this step can be done in time $O(n^{k+1})$.
Then we check if $|\tilde{N}_{\a, 1}(v,H)| \ge \eta n$ for every $v\in V(H)$.
With the reachability information, this can be tested in time $O(n^2)$.
If $|\tilde{N}_{\a, 1}(v,H)| < \eta n$ for some $v\in V(H)$, then we output PM and halt.
Otherwise we run the algorithm with running time $O(n^{2k+1})$ provided by Lemma~\ref{lem:P} and get a partition $\cP$.
By Theorem~\ref{genthm}, it remains to test if $(\mathcal P, L^{\mu}_{\mathcal P,E}(H))$ is $k$-soluble.
This can be done by testing whether any matching $M$ of size at most $k$ is a solution of $(\mathcal P, L^{\mu}_{\mathcal P,E}(H))$, in time $O(n^{k^2})$.
If there is a solution $M$ for $(\mathcal P, L^{\mu}_{\mathcal P,E}(H))$, output PM; otherwise output NO.
The overall running time is $O(n^{k^2})$.


\section{The perfect graph packing result}\label{sec9}
In this section we prove Theorem~\ref{thm:Ftil}.
Let $F$ be an $m$-vertex $k$-chromatic graph.
By the definition of $\chi_{cr}(F)$, we have
\begin{equation}\label{eqx}
\frac1{\chi_{cr}(F)} = \frac{m-\sigma(F)}{(k-1)m} \le \frac{m-1}{(k-1)m}.
\end{equation}

We will apply the following variant of Lemma~\ref{LM4.2}, which can be easily derived from the original version by defining a $k$-graph $G'$ where each $k$-set forms a hyperedge if and only if it spans a copy of $K_k$ in $G$.
For any vertex $u\in V(G)$, let $W(u)$ denote the collection of $(k-1)$-sets $S$ such that $S\subseteq N(u)$ and such that $S$ spans a clique in $G$.
For a set $T\subseteq V(G)$, let $N(T):=\bigcap_{v\in T}N(v)$.

\begin{lemma}\cite{LM1}\label{agood}
Let $k,m \in \mathbb N$ and $\r'>0$. There exists $\a=\a (k,m, \r')>0$
 such that the following holds for sufficiently large $n$. 
Let $F$ be a $k$-chromatic graph on $m$ vertices. For any $n$-vertex graph $G$, two vertices $x, y \in V(G)$ are $(F,\a,1)$-reachable if the number of $(k-1)$-sets $S\in W(x)\cap W(y)$ with $|N(S)|\ge \r' n$ is at least $(\r')^2 \binom{n}{k-1}$.
\end{lemma}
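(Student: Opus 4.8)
The plan is to deduce Lemma~\ref{agood} from Lemma~\ref{LM4.2} by passing to an auxiliary $k$-graph. Given the $n$-vertex graph $G$, I would first form the $k$-graph $G'$ on vertex set $V(G)$ whose edges are exactly the $k$-sets spanning a copy of $K_k$ in $G$. The first step is then purely bookkeeping: checking that the clique data of $G$ appearing in the hypothesis is precisely the link/neighbourhood data of $G'$. Indeed, for any vertex $x$ the set of $(k-1)$-sets $S$ with $S\cup\{x\}\in E(G')$ is exactly $W(x)$, so $W(x)\cap W(y)$ is the common link of $x$ and $y$ in $G'$; and for a $(k-1)$-set $S$ spanning a clique in $G$, the set $\{w: S\cup\{w\}\in E(G')\}$ is precisely the common neighbourhood $N(S)$ of $S$ in $G$ (automatically disjoint from $S$, as $S$ is a clique). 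Hence the assumption that at least $(\r')^2\binom{n}{k-1}$ sets $S\in W(x)\cap W(y)$ satisfy $|N(S)|\ge \r' n$ is exactly the hypothesis of Lemma~\ref{LM4.2} applied to $H=G'$ with $\r:=\r'$.

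Next I would fix the target for Lemma~\ref{LM4.2}. Take a proper $k$-colouring of $F$ with colour classes $A_1,\dots,A_k$; since $\chi(F)=k$ every $A_i$ is non-empty and $\sum_i|A_i|=m$. Let $K:=K^{(k)}(|A_1|,\dots,|A_k|)$, the complete $k$-partite $k$-graph with these part sizes, which is a $k$-partite $k$-graph of order $m$. Lemma~\ref{LM4.2} then yields a constant $\a>0$ --- depending only on $k$, $m$, and $\r'$, since up to isomorphism there are only finitely many $k$-partite $k$-graphs of order $m$, so one may take the minimum of the guaranteed constants over all of them --- such that, for $n$ large, the identification above makes the hypothesis hold in $G'$, and therefore $x$ and $y$ are $(K,\a,1)$-reachable in $G'$: there are at least $\a n^{m-1}$ $(m-1)$-sets $T$ with both $G'[T\cup\{x\}]$ and $G'[T\cup\{y\}]$ containing a perfect $K$-packing, which (as $|V(K)|=m$) is just a single copy of $K$.

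The final step is to transfer reachability from $G'$ back to $G$, and this is where the choice of $K$ is used. I claim that if $G'[M]$ contains a copy of $K$ on an $m$-set $M$, then $G[M]$ contains a copy of $F$: an embedding $\phi$ of $K$ into $G'[M]$ sends any two vertices lying in distinct parts $A_i,A_j$ to a pair of $G$ (such a pair extends --- all parts being non-empty --- to a transversal edge of $K$, whose image spans a $K_k$ in $G$), and since every edge of $F$ joins two distinct colour classes, $\phi$ is also an embedding of $F$ into $G[M]$. Applying this with $M=T\cup\{x\}$ and $M=T\cup\{y\}$ shows that each of the $\ge\a n^{m-1}$ sets $T$ above is a reachable $(m-1)$-set for $x$ and $y$ with respect to $F$ in $G$, so $x$ and $y$ are $(F,\a,1)$-reachable in $G$, completing the argument.

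The two translations --- identifying the hypothesis of Lemma~\ref{LM4.2} for $G'$ with the clique hypothesis stated for $G$, and showing that a copy of $K$ among the $K_k$-subgraphs of an induced $m$-vertex subgraph forces a copy of $F$ --- are the only points needing care, and I expect the latter (choosing $K$ from a colouring of $F$ so that this implication holds) to be the crux; everything else is routine.
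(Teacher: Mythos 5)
Your proof is correct and follows exactly the route the paper sketches (it says the lemma ``can be easily derived from the original version by defining a $k$-graph $G'$ where each $k$-set forms a hyperedge if and only if it spans a copy of $K_k$ in $G$''): you form the clique $k$-graph $G'$, match up $W(\cdot)$ with the link in $G'$, apply Lemma~\ref{LM4.2} with $K$ the complete $k$-partite $k$-graph given by a $k$-colouring of $F$, and transfer a spanning copy of $K$ in $G'[M]$ back to a spanning copy of $F$ in $G[M]$. The paper leaves these translations to the reader; you have filled them in correctly, including the point that $\alpha$ can be taken uniform over the finitely many complete $k$-partite $k$-graphs of order $m$.
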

We apply Lemma~\ref{agood} to prove the following result.
\begin{proposition}\label{prop:Nv}
Let $k,m,n \geq 2$ be integers and $\a, \r>0$ where
 $0< 1/n \ll \a \ll \r \ll 1/m,1/k$.
Let $F$ be a $k$-chromatic graph on $m$ vertices and
 let $G$ be an $n$-vertex graph with $\delta(G)\ge (1-1/\chi_{cr}(F)+\r) n$. Then for any $v\in V(G)$, $|\tilde{N}_{F,\a, 1}(v, G)| \ge (1/m+\r/2)n$.
\end{proposition}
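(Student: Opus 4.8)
The plan is to combine Lemma~\ref{agood} with a double-counting estimate. Write $\theta := 1 - 1/\chi_{cr}(F) + \r$, so that $\delta(G) \ge \theta n$, and fix an auxiliary constant $\rho$ with $\a \ll \rho \ll \r$. The first thing I would record is that the common neighbourhood $N(T)$ of any $j$-clique $T$ of $G$ has size at least $(1-j(1-\theta))n$, and that, by the definition of $\chi_{cr}(F)$ (see~\eqref{eqx}), $(k-1)(1-\theta) = (k-1)/\chi_{cr}(F) - (k-1)\r = 1 - \sigma(F)/m - (k-1)\r$. Hence every $(k-1)$-clique $S$ of $G$ satisfies $|N(S)| \ge (\sigma(F)/m + (k-1)\r)n \ge n/m \ge \rho n$, and every $j$-clique with $j \le k-1$ has positive common neighbourhood. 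A greedy vertex-by-vertex embedding of a copy of $K_k$ through $v$ (at step $j \le k-1$ there are at least $(1-j(1-\theta))n > 0$ choices) then shows that the number $|W(v)|$ of $(k-1)$-cliques contained in $N(v)$ — equivalently, of copies of $K_k$ through $v$ — satisfies $|W(v)| \ge c_1 n^{k-1}$ for some constant $c_1 = c_1(m,k) > 0$.

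Next I would apply Lemma~\ref{agood}. Since every $(k-1)$-clique has common neighbourhood of size at least $\rho n$, the hypothesis of that lemma for the pair $v,u$ reduces to $|W(u)\cap W(v)| \ge \rho^2\binom{n}{k-1}$, and whenever this holds $u$ is $(F,\a,1)$-reachable to $v$ (here we use $\a \ll \rho$ and that $\a$ is small enough for the lemma). Consequently $\tilde{N}_{F,\a,1}(v,G) \supseteq V(G)\setminus R$, where $R := \{u\in V(G): |W(u)\cap W(v)| < \rho^2\binom{n}{k-1}\}$, so it remains to prove $|R| \le (1-1/m-\r/2)n$.

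To bound $|R|$ I would double count the pairs $(u,S)$ with $S\in W(v)$ and $S\subseteq N(u)$. Summing over $S$ first, this count equals $\sum_{S\in W(v)}|N(S)| \ge |W(v)|(\sigma(F)/m+(k-1)\r)n$ by the estimate above; summing over $u$ first, it equals $\sum_u |W(u)\cap W(v)| \le |R|\rho^2\binom{n}{k-1} + (n-|R|)|W(v)|$, where for $u\notin R$ we only use $|W(u)\cap W(v)| \le |W(v)|$. Since $|W(v)| \ge c_1 n^{k-1}$ we have $\rho^2\binom{n}{k-1}/|W(v)| \le \rho^2/c_1 < \r/100$ for an appropriate choice of $\rho$, and rearranging the two inequalities gives
\[
|R| \le \frac{(1-\sigma(F)/m-(k-1)\r)\,n}{1-\rho^2\binom{n}{k-1}/|W(v)|} \le \Big(1-\frac{\sigma(F)}{m}-(k-1)\r\Big)\Big(1+\frac{\r}{50}\Big)n \le \Big(1-\frac1m-\frac\r2\Big)n,
\]
where the last step uses $\sigma(F)\ge 1$ and $k\ge 2$ (so that $(k-1)-1/50 \ge 1/2$). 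This yields $|\tilde{N}_{F,\a,1}(v,G)| \ge n-|R| \ge (1/m+\r/2)n$, as required.

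The step I expect to be the crux is the lower bound $|W(v)| \ge c_1 n^{k-1}$: the double-counting argument is worthless unless the defect term $|R|\rho^2\binom{n}{k-1}$ is negligible against $|R|\cdot|W(v)|$, i.e.\ unless $|W(v)|$ is not too small. This is exactly where the precise shape of the degree hypothesis enters, through the identity $(k-1)(1-\theta) = 1 - \sigma(F)/m - (k-1)\r < 1$, which forces every $(k-2)$-clique inside $N(v)$ to extend to a copy of $K_k$ and hence produces $\Omega_{m,k}(n^{k-1})$ copies of $K_k$ through $v$. Everything else is routine counting.
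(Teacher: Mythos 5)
Your proof is correct and follows essentially the same route as the paper: bound $|N(S)|$ for $(k-1)$-sets via \eqref{eqx}, invoke Lemma~\ref{agood} to reduce reachability to $|W(u)\cap W(v)|$ being large, and double-count pairs $(u,S)$ with $S\in W(v)$, $S\subseteq N(u)$ against the lower bound $|W(v)|\gtrsim n^{k-1}$. The only cosmetic differences are your auxiliary constant $\rho$ (the paper uses $\r$ directly) and phrasing the conclusion as an upper bound on the complement $R$ rather than a direct lower bound on $\tilde N_{F,\a,1}(v,G)$.
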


\begin{proof}
For each $(k-1)$-set $S$, since $\delta(G)\ge (1-1/\chi_{cr}(F)+\r) n$, by~\eqref{eqx} we have $|N(S)|\ge (1/m + (k-1)\r)n$.
Then by Lemma~\ref{agood}, for any distinct $u, v\in V(G)$, $u\in \tilde{N}_{F,\a, 1}(v, G)$ if $|W(u)\cap W(v)|\ge \r^2 \binom{n}{k-1}$.
By double counting, we have
\[
 \sum_{S\in W(v)} \left (|N(S)|-1\right) \leq |\tilde{N}_{F,\a, 1}(v, G)|\cdot |W(v)|+n\cdot \r^2 \binom{n}{k-1}.
\]
Note that any $S$ in the above inequality is a $(k-1)$-set, thus $|N(S)| \ge (1/m + (k-1)\r)n$. 
On the other hand, using the minimum degree condition, it is easy to see that $|W(v)|\ge \frac{1}{m^{k-1}}\binom{n}{k-1}$. Since $\r \ll 1/m, 1/k$, we have
\[
|\tilde{N}_{F,\a, 1}(v, G)|\geq (1/m + (k-1)\r)n-1 - \frac{\r^2 n^k}{|W(v)|}\ge (1/m + \r/2)n.\qedhere
\]
\end{proof}

The following proposition shows that $|Q(\cP, L_{\cP,F}^{\mu}(G))|$ is bounded from above.

\begin{proposition}\label{prop:group}
Let $t,r,k,m ,n_0 \in \mathbb N$ where $k \geq 2$ and let $\beta, \mu ,\r>0$
so that
\[
1/n_0 \ll \beta, \mu \ll \r \ll 1/m, 1/t.
\]
Let $F$ be an unbalanced $m$-vertex $k$-chromatic graph.
Suppose $G$ is a graph on $n\ge n_0$ vertices such that $\delta(G)\ge (1-1/\chi_{cr}(F)+\r) n$ with an $(F,\beta, t, 1/m)$-good partition $\cP$ where $|\cP|=r$. 
Then $|Q(\cP, L_{\cP,F}^{\mu}(G))|\le (2 m-1)^r$.
\end{proposition}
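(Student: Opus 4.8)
The plan is to bound $|Q(\cP, L)|$ directly, where throughout I abbreviate $L := L_{\cP,F}^{\mu}(G)$ and write $I := I_{\cP,F}^{\mu}(G)$ for the (finite) set of index vectors realised by at least $\mu n^m$ copies of $F$ in $G$, so that $L = \langle I\rangle$ and $|Q(\cP, L)| = [L_{\max}^r : L]$; recall that every element of $I$ is an $m$-vector, i.e.\ has nonnegative coordinates summing to $m$. The first step is a purely lattice-theoretic reduction: it suffices to produce a fixed $\bfv^* \in I$ together with linearly independent vectors $\bfd_2, \dots, \bfd_r \in L$, each lying in the hyperplane $\{\bfv : |\bfv| = 0\}$ and each a difference of two $m$-vectors (so all coordinates have modulus at most $m$). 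Indeed, since $\bfv^*$ is an $m$-vector one has $L_{\max}^r = \langle \bfv^*\rangle \oplus L^0$ where $L^0 := \{\bfv \in \mathbb Z^r : |\bfv| = 0\}$, and $\{\mathbf e_2 - \mathbf e_1, \dots, \mathbf e_r - \mathbf e_1\}$ (with $\mathbf e_i$ the $i$th standard basis vector) is a $\mathbb Z$-basis of $L^0$; writing each $\bfd_i$ in this basis gives an $(r-1)\times(r-1)$ integer matrix whose determinant is, up to sign, exactly $[L_{\max}^r : \langle \bfv^*, \bfd_2, \dots, \bfd_r\rangle]$, and a routine estimate of this determinant gives the bound $(2m-1)^r$. (In the construction below each $\bfd_i$ will be supported on just two coordinates, so the matrix is diagonal with entries of modulus at most $m$ and the estimate is immediate.) Thus everything reduces to producing $\bfv^*$ and the $\bfd_i$.

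The heart of the argument is producing these vectors from copies of $F$ in $G$, and this is where the hypotheses on $G$ are used. By Proposition~\ref{prop:Nv}, every vertex $v$ satisfies $|\tilde N_{F, \a, 1}(v, G)| \ge (1/m + \r/2)n$; in particular every vertex lies in $\Omega(n^m)$ copies of $F$, and since there are only boundedly many possible index vectors, pigeonhole gives some $\bfv^* \in I$ (using $\mu \ll \r$). For the $\bfd_i$, fix a reference part, say $V_1$. For each $i \in \{2, \dots, r\}$ one first finds, by the same counting argument restricted to copies of $F$ meeting $V_1$ (legitimate since $|V_1| \ge n/m$), a vector $\bfv_i' \in I$ with positive first coordinate. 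The key move is then to \emph{re-route} one (or a bounded number) of the vertices of the copies witnessing $\bfv_i'$ out of $V_1$ and into $V_i$, keeping the remaining vertices fixed: the abundance of reachable vertices furnished by Proposition~\ref{prop:Nv}, together with Lemma~\ref{agood} and the minimum degree condition $\delta(G) \ge (1 - 1/\chi_{cr}(F) + \r)n$, makes such a re-routing possible \emph{robustly}, i.e.\ for at least $\mu n^m$ copies of $F$ simultaneously. This yields a vector $\bfv_i \in I$ with $\bfv_i - \bfv_i'$ supported on coordinates $1$ and $i$ and nonzero in coordinate $i$, and we set $\bfd_i := \bfv_i - \bfv_i'$. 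These are visibly linearly independent and lie in $L^0$, so with the reduction of the previous paragraph the proof is complete.

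The step I expect to be the main obstacle is precisely this robust re-routing: upgrading ``there is a copy of $F$ with the required index vector'' to ``there are at least $\mu n^m$ of them'', and dealing with the fact that a given vertex of $F$ need not be placeable directly into $V_i$ (large global degree does not force large degree into a fixed $(1/m)$-sized part), so the re-routing may have to be carried out along several vertices at once rather than one at a time — which is why the $\bfd_i$ are only guaranteed to have coordinates bounded by $m$ rather than to equal $\mathbf e_i - \mathbf e_1$. Both issues should be handled using the minimum degree condition (via the reachability mechanism underlying Proposition~\ref{prop:Nv} and Lemma~\ref{agood}) together with the parameter hierarchy $\b, \mu \ll \r$, which provides the room needed for the counting arguments.
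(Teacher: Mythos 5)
Your lattice-theoretic reduction (pick $\bfv^* \in I$, find difference vectors $\bfd_i \in L$ of zero coordinate sum, bound the determinant) is sound in outline, though the paper's approach is more economical: it shows directly that every coset of $L$ in $L_{\max}^r$ has a representative with all coordinates in $\{-(m-1),\dots,m-1\}$, which immediately gives $|Q|\le (2m-1)^r$ without having to establish that $L$ has full rank or set up a basis of $L^0$.

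The genuine gap is exactly where you flag it: the ``robust re-routing'' of a vertex of a copy of $F$ from $V_1$ into $V_i$. You suggest deriving this from reachability via Proposition~\ref{prop:Nv} and Lemma~\ref{agood}, but this cannot work in general, for a structural reason: the parts $V_i$ of an $(F,\beta,t,1/m)$-good partition are precisely the reachability classes, so the mechanism of Lemma~\ref{agood} is guaranteed within each part but typically fails between distinct parts --- indeed if vertices of $V_1$ and $V_i$ were generically reachable to one another there would be no reason to have $r\ge 2$ in the first place. A related symptom is that your argument never uses the hypothesis that $F$ is \emph{unbalanced}, which is essential: for balanced $F$ the generated lattice can have rank $1$ (e.g.\ every abundant index vector could be a multiple of a single vector), making $Q$ infinite, so any correct proof must invoke unbalancedness somewhere.

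The paper's proof resolves both issues at once with a different idea. Since $F$ is unbalanced it has a proper $k$-colouring with class sizes $a_1\le\cdots\le a_k$ and $a:=a_k-a_1>0$. Define an auxiliary graph $P$ on $[r]$ with $ij\in E(P)$ when $e(G[V_i,V_j])\ge \r n^2$. For each such edge, the minimum degree condition plus supersaturation (Proposition~\ref{prop:erdos}) produces $\ge\mu n^m$ copies of $K^{(2)}(a_1,\dots,a_k)$ straddling $V_i,V_j$ in \emph{two} different ways --- once with the size-$a_1$ class in $V_i$, once with it in $V_j$ --- and since $F\subseteq K^{(2)}(a_1,\dots,a_k)$ both yield copies of $F$ whose index vectors differ by $a(\bfu_i-\bfu_j)$. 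So $a(\bfu_i-\bfu_j)\in L$ along edges (and hence paths) of $P$. It then remains to argue that $P$ is connected when $k\ge 3$ (via a degree count over bipartitions of $[r]$), and to treat $k=2$ separately: there $P$ may have isolated vertices $i$, for which $G[V_i]$ must then be dense, giving $m\bfu_i\in L$ and allowing the bound to be carried out componentwise. Your plan has no analogue of the colour-class swap, nor of the connectivity/component analysis of $P$, and these are the substantive parts of the proof.
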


We need the following simple counting result, which, for example, follows from the result of Erd\H{o}s \cite{erdos} on supersaturation. 

\begin{proposition}\label{prop:erdos}
Given $\r'> 0$, $\ell_1, \dots, \ell_ k\in \mathbb{N}$, there exists $\mu>0$ such that the following holds for sufficiently large $n$. Let $T$ be an $n$-vertex graph with a vertex partition $V_1 \cup \dots \cup V_d$. Suppose $i_1, \dots, i_k\in [d]$ are not necessarily distinct and $T$ contains at least $\r' {n}^{k}$ copies of $K_k$ with vertex set $\{ v_1, \dots, v_k \}$ such that $v_1\in V_{i_1}$, $\dots, v_k\in V_{i_k}$. Then $T$ contains at least $\mu {n}^{\ell_1+\cdots + \ell_k}$ copies of $K^{(2)}(\ell_1, \dots, \ell_ k)$ whose $j$th part is contained in $V_{i_j}$ for all $j\in [k]$.
\end{proposition}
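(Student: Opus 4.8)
The plan is to deduce this from the classical supersaturation theorem of Erd\H{o}s \cite{erdos} for complete multipartite hypergraphs, after passing to an auxiliary $k$-uniform, $k$-partite hypergraph that records the given copies of $K_k$. Concretely, I would take $k$ pairwise disjoint ``layers'' $W_1,\dots,W_k$, where $W_j$ is a copy of $V_{i_j}$, and, for every copy of $K_k$ in $T$ on a vertex set $\{v_1,\dots,v_k\}$ with $v_j\in V_{i_j}$ for each $j$, add to the edge set of an auxiliary hypergraph $\mathcal{H}$ the hyperedge $\{w_1,\dots,w_k\}$, where $w_j\in W_j$ denotes the copy of $v_j$. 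Distinct copies or distinct labellings give distinct hyperedges, so $\mathcal{H}$ is a simple hypergraph on at most $kn$ vertices with at least $\r' n^k$ edges. Since $e(\mathcal{H})\le \prod_{j\in[k]}|V_{i_j}|$, we get $\prod_{j}|V_{i_j}|\ge \r' n^k$, and as each factor is at most $n$ this forces $|W_j|=|V_{i_j}|\ge \r' n$ for every $j$; in particular $e(\mathcal{H})\ge \r'\prod_{j}|W_j|$ and all layers have linear size.

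Next I would apply the $k$-partite form of Erd\H{o}s' supersaturation theorem: a $k$-uniform $k$-partite hypergraph with parts of (large) sizes $N_1,\dots,N_k$ and at least $\r'\prod_j N_j$ edges contains at least $\d\prod_j N_j^{\ell_j}$ \emph{transversal} copies of the complete $k$-partite $k$-uniform hypergraph with part sizes $\ell_1,\dots,\ell_k$ aligned with the host parts, i.e.\ families $(A_1,\dots,A_k)$ with $A_j\in\binom{W_j}{\ell_j}$ and $A_1\times\cdots\times A_k\subseteq E(\mathcal{H})$, for some $\d=\d(\r',\ell_1,\dots,\ell_k)>0$; this is obtained from the usual Erd\H{o}s argument, blowing up one part at a time via convexity. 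Since $|W_j|\ge\r' n$ for all $j$, this produces at least $\d(\r' n)^{\ell_1+\cdots+\ell_k}$ such families in $\mathcal{H}$.

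Finally I would pull each family $(A_1,\dots,A_k)$ back to $T$: let $\widehat{A}_j\subseteq V_{i_j}$ be the image of $A_j$ under the bijection $W_j\to V_{i_j}$. The sets $\widehat{A}_1,\dots,\widehat{A}_k$ are pairwise disjoint, because if some $v$ lay in $\widehat{A}_j\cap\widehat{A}_{j'}$ with $j\ne j'$, then a transversal hyperedge of $\mathcal{H}$ through the two layer-copies of $v$ would arise from a copy of $K_k$ in $T$ having $v$ as two of its (distinct) vertices, which is absurd; and for $j\ne j'$, $v\in\widehat{A}_j$, $v'\in\widehat{A}_{j'}$, a transversal hyperedge through the corresponding layer-copies witnesses $\{v,v'\}\in E(T)$. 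Hence $\widehat{A}_1\cup\cdots\cup\widehat{A}_k$ spans a copy of $K^{(2)}(\ell_1,\dots,\ell_k)$ in $T$ whose $j$th part lies in $V_{i_j}$, and the map $(A_1,\dots,A_k)\mapsto(\widehat{A}_1,\dots,\widehat{A}_k)$ is injective (the bijections $W_j\leftrightarrow V_{i_j}$ are fixed), so $T$ contains at least $\d(\r')^{\ell_1+\cdots+\ell_k}n^{\ell_1+\cdots+\ell_k}$ such copies; taking $\mu:=\d(\r')^{\ell_1+\cdots+\ell_k}$ finishes the argument. I do not expect a genuine obstacle: the only point needing care is the bookkeeping in the layered construction when the indices $i_1,\dots,i_k$ coincide, which is exactly what the disjointness check above handles, together with the elementary observation $|V_{i_j}|\ge\r' n$ that makes the supersaturation input available; everything else is a direct appeal to \cite{erdos}.
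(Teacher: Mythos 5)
The paper gives no explicit proof of Proposition~\ref{prop:erdos}; it simply asserts that the statement ``follows from the result of Erd\H{o}s \cite{erdos} on supersaturation.'' Your argument is a correct and careful instantiation of exactly that reduction: you encode the labelled $K_k$'s as edges of an auxiliary $k$-partite $k$-graph $\mathcal{H}$ on disjoint layers $W_1,\dots,W_k$ (each in bijection with $V_{i_j}$), observe that $e(\mathcal{H})\ge \r' n^k$ forces both $|W_j|\ge\r' n$ and density $\ge\r'$, invoke the $k$-partite form of Erd\H{o}s' supersaturation to get $\d\prod_j|W_j|^{\ell_j}$ aligned copies of $K^{(k)}(\ell_1,\dots,\ell_k)$, and then pull back. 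The pullback step is where the real content lies and you handle it correctly: the disjointness of $\widehat{A}_1,\dots,\widehat{A}_k$ follows because a transversal hyperedge through two layer-copies of the same vertex would come from a $K_k$ with a repeated vertex, and the cross-edges of $K^{(2)}(\ell_1,\dots,\ell_k)$ are witnessed by transversal hyperedges; this is exactly the point where coinciding indices $i_j=i_{j'}$ could cause trouble, and your layered construction neutralises it cleanly. The constant $\mu:=\d(\r')^{\ell_1+\cdots+\ell_k}$ (and, if one insists on counting unordered copies, a further harmless $k!$ factor) is fine. I see no gap; this is essentially the intended argument, just written out in full where the paper says only ``follows from Erd\H{o}s.''
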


We write $\bfu_j$ for the `unit' 1-vector that has 1 in coordinate $j$ and 0 in all other coordinates.

\begin{proof}[Proof of Proposition~\ref{prop:group}]
Write $L:=L_{\cP,F}^{\mu}(G)$.
It suffices to show that for any element $\bfv\in L_{\max}^{r}$, there exists $\bfv'=(v_1',\dots, v_r')\in L_{\max}^{r}$
 such that $-(m-1)\le v_i'\le m-1$ for all $i\in [r]$ and $\bfv +L= \bfv'+ L$. In particular, the number of such $\bfv'$ is at most $(2m-1)^r$.
Since $F$ is unbalanced, there exists a $k$-colouring with colour class sizes $a_1\le \cdots \le a_k$ and $a_1< a_k$. 
Set $a:=a_{k} - a_{1}<m$.

Let $\cP = \{V_1, \dots, V_r\}$ be the partition of $V(G)$ given in the statement of the proposition.
Define a graph $P$ on the vertex set $[r]$ such that $(i, j)\in E(P)$ if and only if $e(G[V_i, V_j])\ge \r n^2$.
We claim that if $i$ and $j$ are connected by a path in $P$, then $a(\bfu_i-\bfu_j)\in L$.
Indeed, first assume that $(i,j)\in E( P)$.
For each edge $u v$ in $G[V_i, V_j]$, since
\[
\delta(G)\ge (1-1/\chi_{cr}(F)+\r) n\stackrel{(\ref{eqx})}{\ge} \left(1- \frac{m-1}{(k-1)m} +\r\right)n,
\]
it is easy to see that $u v$ is contained in at least $\frac{1}{m^{k-2}} \binom{n}{k-2}$ copies of $K_k$ in $G$.
So there are at least $\r n^2 \cdot \frac{1}{m^{k-2}} \binom{n}{k-2}/\binom{k}{2}$ copies of $K_k$ in $G$ intersecting both $V_i$ and $V_j$.
By averaging, there exists a $k$-array $(i_1, \dots, i_k)$, $i_j\in [r]$ where $i_1=i$ and $i_k=j$ such that $G$ contains at least
\[
\frac{1}{r^{k-2}}\r n^2 \cdot \frac{1}{m^{k-2}} \binom{n}{k-2}/\binom{k}{2} \ge \frac{\r}{m^{k-2}r^{k-2}k!}n^{k}
\]
copies of $K_k$ with vertex set $\{ v_1, \dots, v_k \}$ such that $v_1\in V_{i_1}$, $\dots, v_k\in V_{i_k}$.
By applying Proposition~\ref{prop:erdos} with $\ell_i:=a_i$ for each $i\in [k]$, we get that there are at least $\mu n^{m}$ copies of $K^{(2)}(a_1, \dots, a_ k)$ in $G$ whose $j$th part is contained in $V_{i_j}$ for all $j\in [k]$.
We apply Proposition~\ref{prop:erdos} again, this time with $\ell_i:=a_i$ for all $2\le i\le k-1$ and $\ell_{1}:=a_{k}$, $\ell_{k}:=a_{1}$ and thus conclude that there are at least $\mu n^{m}$ copies of $K^{(2)}(a_k, a_2,\dots, a_{k-1}, a_1)$ (with $a_1$ and $a_k$ exchanged) in $G$ whose $j$th part is contained in $V_{i_j}$ for all $j\in [k]$.
Taking subtraction of index vectors of these two types of copies gives that $a(\bfu_i-\bfu_j)\in L$.
Furthermore, note that if $i$ and $j$ are connected by a path in $P$, we can apply the argument above to every edge in the path and conclude that $a(\bfu_i-\bfu_j)\in L$, so the claim is proved.

We now distinguish two cases.

\medskip
\noindent \textbf{Case 1: $k\ge 3$.}
In this case, we first show that $P$ is connected.
Indeed, we prove that for any bipartition $A\cup B$ of $[r]$, there exists $i\in A$ and $j\in B$ such that $(i,j)\in E(P)$.
Let $V_A:=\bigcup_{i\in A} V_i$ and $V_B:=\bigcup_{j\in B}V_j$.
Without loss of generality, assume that $|V_A|\le n/2$.
Since $\delta(G)\ge \frac{1+(k-2)m}{(k-1)m} n \ge (1/2 + 1/(2m))n$, the number of edges in $G$ that are incident to $V_A$ is at least
\[
|V_A| \cdot \left(\frac12 + \frac{1}{2m} \right)n - \binom{|V_A|}{2} \ge \binom{|V_A|}{2} + \frac{n}{4m}|V_A| \ge \binom{|V_A|}{2} + \r n^2 |A| |B|,
\]
where the last inequality follows since $|A| |B|\le r^2/4$,  $|V_i|\ge n/m$ for each $i\in [r]$ and $\gamma \ll 1/m$.
By averaging, there exists $i\in A$ and $j\in B$ such that $e(G[V_i, V_j])\ge \r n^2$ and thus $(i,j)\in E(P)$.

Now let $\bfv=(v_1,\dots, v_r)\in L_{\max}^{r}$.
We fix an arbitrary $m$-vector $\bfw\in L$ and let $\bfv_1 := \bfv - ( |\bfv|/m ) \bfw$. So  $|\bfv_1| =0$ and $\bfv_1+L=\bfv +L$.
Since $P$ is connected, the claim above implies that for any $i,j\in [r]$, $a(\bfu_i-\bfu_j)\in L$.

We now apply the following algorithm to $\bfv_1$.
Suppose $v^1 _i$ is the coordinate of $\bfv _1$ with $|v^1 _i|$ maximised.
If $|v^1 _i|\leq a \leq m-1$ we terminate the algorithm. Otherwise, since 
$|\bfv_1| =0$, there is some coordinate $v^1 _j$ of $\bfv _1$ 
where the difference between $v^1_i$ and $v^1 _j$ is more than $a$. 
We now redefine $\bfv _1$ by (i) subtracting $a(\bfu_i-\bfu_j)\in L$ from $\bfv _1$ if $v^1 _i> a$ or (ii) adding $a(\bfu_i-\bfu_j)\in L$ to $\bfv_1$ if $v^1 _i< a$.
Note that still $|\bfv_1| =0$.

We repeat this algorithm until we obtain a vector $\bfv '=(v'_1, \dots, v'_r)$ so that 
$|\bfv '| =0$ and $-(m-1)\le -a\le v_i'\le a \leq m-1$ for all $i\in [r]$. Note
$\bfv'$ was obtained from $\bf v_1$ by repeatedly adding and subtracting elements
of $L$ to $\bfv_1$. Since initially $\bfv_1+L=\bfv +L$ we have that 
$\bfv '+L=\bfv +L$, as desired.

\medskip
\noindent \textbf{Case 2: $k= 2$.}
In this case we cannot guarantee that $P$ is connected (we may even have some isolated vertices).
First let $i$ be an isolated vertex in $P$.
By the definition of $P$, we know that $e(G[V_i, V\setminus V_i]) \le (r-1)\r n^2$.
Since $\delta(G)\ge n/m$, 
\[
e(G[V_i]) \ge \frac12 (|V_i| n/m - (r-1)\r n^2) \ge \frac1{4m} |V_i|^2.
\]
Applying Proposition~\ref{prop:erdos} on $V_i$ shows that there are at least $\mu n^m$ copies of $K^{(2)}(a_1, a_2)$ in $G[V_i]$, i.e., $m\bfu_i\in L$.
Second, if $(i,j)\in E(P)$, then applying Proposition~\ref{prop:erdos} to $G[V_i, V_j]$ gives that $a_1\bfu_i + a_2\bfu_j\in L$.
So in both cases, for any component $C$ in $P$, there exists an $m$-vector $\bfw\in L$ such that $\bfw |_{[d]\setminus C}=\textbf{0}$.

Now let $\bfv=(v_1,\dots, v_r)\in L_{\max}^{r}$.
Consider the connected components $C_1, C_2, \dots, C_q$ of $P$, for some $1\le q\le r$.
By the conclusion in the last paragraph, there exists $\bfv_1\in L_{\max}^{r}$ such that $\bfv - \bfv_1\in L$ (i.e. $\bfv +L=\bfv_1 +L$) and for each component $C_i$, $0\le |\bfv_1|_{C_i}|\le m-1$. (We obtain $\bfv_1$ from $\bfv$ by adding or subtracting from it a multiple of the vector $\bfw$ given by the last paragraph, for each component $C$.)
By using an analogous algorithm to the one in Case 1, we can obtain the desired vector $\bfv'$ 
from $\bfv _1$. Indeed, using the vectors $\bfw$ given by the last paragraph,  within each nontrivial component $C_i$, we can `balance' the coordinates, as in Case 1. In particular, note that if $(i,j)\in E(P)$ then both $a_1\bfu_i + a_2\bfu_j, a_2\bfu_i + a_1\bfu_j\in L$ and so $a(\bfu_i - \bfu_j)\in L$.
\end{proof}

Now we are ready to prove Theorem~\ref{thm:Ftil}.

\begin{proof}[Proof of Theorem~\ref{thm:Ftil}]
We first note that it suffices to prove Theorem~\ref{thm:Ftil} in the case when $F$
is unbalanced. Indeed, if $F$ is balanced then $\chi (F)=\chi _{cr} (F)$ and so
the result follows (trivially) from Theorem~\ref{thm:AY}.

Given any $\delta \in (1-1/\chi _{cr}(F),1]$ let $\mu,\alpha, \gamma >0$ so that $0<\mu \ll\alpha\ll \gamma \ll (\delta -1+1/\chi _{cr}(F)), 1/m,1/k$.
Apply Lemma \ref{lem:P} with $c:=m^{k-1}$, $\delta':=1/m+\r /2$ to obtain some $\beta>0$. We may assume $\beta \ll \alpha$. Finally choose $n_0 \in \mathbb N$ such that $1/n_0 \ll \beta, \mu$.
Altogether we have
\[
1/n_0\ll \beta, \mu \ll \a \ll \r \ll (\delta -1+1/\chi _{cr}(F)), 1/m,1/k.
\]
Let $G$ be an $n$-vertex graph as in the statement of Theorem~\ref{thm:Ftil}. We may assume that $n\geq n_0$ and $m$ divides $n$ since else the result is trivial.
Note that $\delta(G)\ge \delta n \geq (1-1/\chi_{cr}(F)+\r) n$.

By Proposition \ref{prop:Nv}, for any $v\in V(G)$, $|\tilde{N}_{F,\a, 1}(v, G)| \ge \delta' n$.
The degree condition and Lemma~\ref{agood} imply that, for distinct $u, v\in V(G)$, $u$ and $v$ are $(F,\a, 1)$-reachable if $|W(u)\cap W(v)|\ge \r^2 \binom{n}{k-1}$. 
Further, for any $u \in V(G)$, the minimum degree condition implies that $|W(u)|\ge \frac1c\binom{n-1}{k-1}$ (recall $c:=m^{k-1}$).
So any set of $c+1$ vertices in $V(G)$ contains two vertices that are $(F,\a, 1)$-reachable (here we use that $(c+1)/c-1\ge \binom{c+1}2\r^2$). 
Thus, we can apply Lemma \ref{lem:P} to $G$ to obtain a partition $\cP=\{V_1, \dots, V_r\}$ of $V(G)$ in time $O(n^{2^{c-1}m+1})$. 
Note that $|V_i|\ge (\delta'-\a)n \ge n/m$ for all $i\in [r]$. Also $r\le  1/\delta'\le m$ and each $V_i$ is $(F,\beta, 2^{c-1})$-closed in $H$.
Thus, $\cP$ is an $(F,\beta, 2^{c-1}, 1/m)$-good partition of $V(G)$.

Note that Theorem~\ref{thm:ShZh} shows that $\delta(F, 1, 5m^2)\le 1-1/\chi_{cr}(F)$ and thus $\delta(G)\ge (1-1/\chi_{cr}(F)+\r) n \ge (\delta(F, 1, 5m^2)+\r)n$.
Moreover, Proposition~\ref{prop:group} shows that $|Q(\cP, L_{\cP,F}^{\mu}(G))|\le (2 m-1)^r$.
So by Theorem~\ref{genthm} with $D:=5m^2$ and $q:=(2m-1)^r$, we conclude that $G$ contains a perfect $F$-packing if and only if $(\mathcal P, L^{\mu}_{\mathcal P,F}(G))$ is $(2m-1)^r$-soluble.

{\bf The algorithm.}
Now we state the algorithm and estimate the running time. 
We run the algorithm with running time $O(n^{2^{m^{k-1}-1}m+1})$ provided by Lemma~\ref{lem:P} and obtain a partition $\cP$ of $V(G)$.
By Theorem~\ref{genthm}, it remains to test if $(\mathcal P, L^{\mu}_{\mathcal P,F}(G))$ is $(2m-1)^r$-soluble.
This can be done by testing whether any $F$-packing $M$ of size at most $(2m-1)^r$ is a $q$-solution of $(\mathcal P, L^{\mu}_{\mathcal P,F}(G))$, in time $O(n^{m(2m-1)^r})=O(n^{m(2m-1)^m})$.
If there is a $q$-solution $M$ for $(\mathcal P, L^{\mu}_{\mathcal P,F}(G))$, output YES; otherwise output NO.
The overall running time is $O(n^{\max\{2^{m^{k-1}-1}m+1, \,m(2m-1)^m\}})$.
\end{proof}

\section{Packing $k$-partite $k$-uniform hypergraphs}\label{sec10}

In this section we prove Theorem~\ref{thm:Ktil}. For this we will first collect together a few useful results.
Throughout this section we consider a (not necessarily complete) $k$-partite $k$-graph $F$ on $m$ vertices, and let $a$ be the minimum of the size of the smallest vertex class over all $k$-partite realisations of $V(F)$.
Let $K(F)\supseteq F$ be a complete $k$-partite $k$-graph on $m$ vertices such that the smallest vertex class has $a$ vertices.
We will also write $\sigma (F):=a/m$.

The next proposition is a supersaturation result of Erd\H{o}s \cite{erdos}.

\begin{proposition}\label{erdos}
Let $\eta> 0$, $k, r \in \mathbb{N}$ and let
  $K:=K^{(k)}(a_1, \dots, a_k)$ be the complete $k$-partite $k$-graph  with $a_1\le \cdots\le a_k$ vertices in each class.  there exists $0<\mu \ll \eta$ such that the following holds for sufficiently large $n$. Let $H$ be an $k$-graph on $n$ vertices with a vertex partition $V_1 \cup \dots \cup V_r$. Consider not necessarily distinct $i_1, \dots, i_k\in [r]$. Suppose $H$ contains at least $\eta {n}^{k}$ edges $e=\{ v_1, \dots, v_k \}$ such that $v_1\in V_{i_1}$, $\dots, v_k\in V_{i_k}$. Then $H$ contains at least $\mu {n}^{a_1+\cdots + a_k}$ copies of $K$ whose $j$th part is contained in $V_{i_j}$ for all $j\in [k]$.
\end{proposition}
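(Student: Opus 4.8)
The statement is the standard supersaturation phenomenon for complete $k$-partite $k$-graphs (a hypergraph Kővári--Sós--Turán / Erdős estimate; see \cite{erdos}), and I would prove it by induction on the uniformity $k$. The base case $k=1$ is immediate: $K^{(1)}(a_1)$ is just an $a_1$-element set of vertices, the hypothesis gives at least $\eta n$ vertices of $V_{i_1}$ that are edges of $H$, and the number of $a_1$-subsets of these is at least $\binom{\eta n}{a_1}\ge\mu n^{a_1}$ for a suitable $\mu=\mu(\eta,a_1)>0$. (Alternatively one could start from $k=2$, which is the classical Kővári--Sós--Turán counting argument.)

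For the inductive step, fix $i_1,\dots,i_k$; the hypothesis provides at least $\eta n^{k}$ ordered $k$-tuples $(v_1,\dots,v_k)$ with $v_j\in V_{i_j}$ and $\{v_1,\dots,v_k\}\in E(H)$ (if one reads the hypothesis as unordered edges this costs only a factor $k!$). For each $a_k$-subset $S\subseteq V_{i_k}$ let $G_S$ be the $(k-1)$-graph on $V(H)\setminus S$ whose edges are those $(k-1)$-sets $\{v_1,\dots,v_{k-1}\}$ (with $v_j\in V_{i_j}$ for $j\le k-1$) such that $\{v_1,\dots,v_{k-1}\}\cup\{w\}\in E(H)$ for every $w\in S$. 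Double counting together with the convexity of $x\mapsto\binom{x}{a_k}$ (in its \emph{defect} form, since many tuples have no completion) gives
\[
\sum_{S}e(G_S)\;=\;\sum_{(v_1,\dots,v_{k-1})}\binom{d(v_1,\dots,v_{k-1})}{a_k}\;=\;\Omega\!\left(n^{\,k-1+a_k}\right),
\]
where $d(v_1,\dots,v_{k-1})$ counts the $w\in V_{i_k}$ that complete the tuple to an edge; here one uses that the sum of the $d(\cdot)$ over the at most $n^{k-1}$ tuples is at least $(\eta/k!)n^{k}$. Since there are at most $n^{a_k}$ choices of $S$ while $e(G_S)\le n^{k-1}$ always, a further defect-averaging step produces $\Omega(n^{a_k})$ sets $S$ with $e(G_S)=\Omega(n^{k-1})$. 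For each such $S$ the induction hypothesis, applied to $G_S$ with classes $a_1\le\cdots\le a_{k-1}$ and the parts $V_{i_1},\dots,V_{i_{k-1}}$, yields $\Omega(n^{a_1+\cdots+a_{k-1}})$ labelled copies of $K^{(k-1)}(a_1,\dots,a_{k-1})$ of the required type; adjoining $S$ as the $k$th part turns each of these into a copy of $K=K^{(k)}(a_1,\dots,a_k)$ in $H$ with $j$th part inside $V_{i_j}$, and distinct pairs $(S,\text{copy})$ give distinct labelled copies of $K$. Multiplying the two bounds gives $\Omega(n^{a_1+\cdots+a_k})$ labelled copies, hence the claimed $\mu n^{a_1+\cdots+a_k}$ unlabelled copies; as each step costs only a multiplicative constant depending on $\eta,k,a_1,\dots,a_k$, the resulting $\mu$ does satisfy $\mu\ll\eta$.

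Two points require care, and the second is the real content. First, the $i_j$ need not be distinct, so parts of a copy of $K$ may lie in a common $V_{i_j}$; one must keep the parts pairwise disjoint, which is exactly why $G_S$ is taken on $V(H)\setminus S$ (discarding the $a_k=O(1)$ vertices of $S$ changes all the edge counts above by only $O(n^{k-2})$), and why it is cleanest to work with labelled copies throughout and divide by $\prod_j a_j!$ at the very end. Second, the step I expect to be the only genuine obstacle is the passage from ``$\sum_S e(G_S)$ is large'' to ``many individual $S$ have $e(G_S)$ large'': this is precisely the defect form of the averaging/convexity estimate, and it rests on the trivial bound $e(G_S)\le\prod_{j<k}|V_{i_j}|\le n^{k-1}$. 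One can also sidestep the induction altogether by quoting Erdős--Simonovits supersaturation: complete $k$-partite $k$-graphs have Turán density $0$ \cite{erdos}, so the partite sub-$k$-graph of $H$ spanned between $V_{i_1},\dots,V_{i_k}$ (replacing any repeated class by disjoint clones if necessary) already contains the asserted number of copies of $K$.
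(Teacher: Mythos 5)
The paper does not prove this proposition at all: it is stated with a bare citation (``a supersaturation result of Erd\H{o}s \cite{erdos}'') and used as a black box, so there is no in-paper argument to compare against. Your induction on the uniformity $k$ --- double-count pairs $(S, \text{tuple})$, apply defect convexity of $x \mapsto \binom{x}{a_k}$, average to find $\Omega(n^{a_k})$ sets $S$ with $e(G_S) = \Omega(n^{k-1})$, then invoke the inductive hypothesis on $G_S$ and adjoin $S$ as the $k$th part --- is the standard Erd\H{o}s counting argument for the zero Tur\'an density of $k$-partite $k$-graphs, and I checked that the details (the labelled-copy bookkeeping, the removal of $S$ from the ground set to keep parts disjoint when the $i_j$ repeat, and the ``defect'' form of convexity that handles tuples with few completions) all go through. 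Your closing alternative, quoting Erd\H{o}s--Simonovits supersaturation for the partite sub-$k$-graph after cloning repeated classes, is in effect what the paper's citation delegates to; either route gives the same $\mu$ depending only on $\eta, k, a_1, \dots, a_k$.
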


We also use the following result of Mycroft~\cite[Theorem 1.5]{Mycroft} which forces an almost perfect $F$-packing.

\begin{theorem}\cite{Mycroft}\label{thm:My}
Let $F$ be a $k$-partite $k$-graph.
There exists a constant $D=D(F)$ such that for any $\a>0$ there exists an $n_0=n_0(F,\a)$ such that any $k$-graph $H$ on $n\ge n_0$ vertices with $\delta_{k-1}(H)\ge \sigma(F) n + \a n$ admits an $F$-packing covering all but at most $D$ vertices of $H$.
\end{theorem}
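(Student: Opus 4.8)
\textbf{Proof plan for Theorem~\ref{thm:My}.} Since the minimum realisation of $F$ has smallest vertex class of size $a=\sigma(F)m$, we may take $K:=K(F)$ to be the complete $k$-partite $k$-graph on the same $m$ vertices whose vertex classes have the sizes of that realisation; then $F\subseteq K$ on a common vertex set, so every copy of $K$ in $H$ contains a copy of $F$ on the same vertices. Hence it suffices to produce a $K$-packing of $H$ covering all but at most $D=D(F)$ vertices. The plan has three ingredients: a fractional tiling lemma, a transference step through hypergraph regularity, and an absorbing step to push the uncovered set down from $o(n)$ to a constant.

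First I would establish a fractional analogue: every $k$-graph $G$ on $N$ vertices with $\delta_{k-1}(G)\ge(\sigma(F)+\alpha/2)N$ has a \emph{perfect fractional $K$-tiling}, i.e.\ weights $w\colon\{\text{copies of }K\text{ in }G\}\to[0,1]$ with $\sum_{Q\ni v}w(Q)\le 1$ for every vertex $v$ and $\sum_Q w(Q)=N/m$. By LP duality this is equivalent to the assertion that every fractional vertex cover $y\colon V(G)\to[0,\infty)$ (meaning $\sum_{v\in Q}y_v\ge1$ for all copies $Q$ of $K$) has $\sum_v y_v\ge N/m$. To prove this I would argue that a cover of total weight below $N/m$ would have to concentrate its mass on a set $S$ that behaves like the space-barrier construction: the point is that for any copy $Q$ of $K$, if no vertex class of $Q$ lies entirely in $\{v: y_v \text{ large}\}$ then the corresponding transversal edge forces a contradiction, so $|Q\cap S|\ge a$; pinning the cover weight to roughly $|S|/a$ then yields $\delta_{k-1}(G)<\sigma(F)N$, contradicting the hypothesis. (This weight-shifting/rounding argument is the first non-routine point; it is the fractional reflection of the fact that $\sigma(F)n$ is the correct threshold.)

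Second, I would apply the (strong) hypergraph regularity lemma to $H$ to obtain a reduced $k$-graph $R$ of bounded order whose minimum relative codegree is at least $\sigma(F)+\alpha/4$, so the fractional tiling lemma applies to $R$ and gives a perfect fractional $K$-tiling of $R$. Standard regularity-to-packing machinery — choose, for each weighted copy of $K$ in $R$, a proportional number of its cluster-tuples, and use the Counting and Extension Lemmas (Proposition~\ref{erdos} being exactly the kind of supersaturation input one uses here) to greedily extract vertex-disjoint copies of $K$ inside $H$ — then yields an integer $K$-packing of $H$ covering all but $o(n)$ vertices.

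Third, and this is the main obstacle, I would reduce the $o(n)$-sized uncovered set to a set of size at most a constant $D=D(F)$. Before running the regularity step one reserves a small absorbing structure: a family of vertex-disjoint copies of $K$ together with short ``switchers'' that allow the packing to be rerouted so as to swallow any small collection $U$ of leftover vertices whose index vector, with respect to the implicit $k$-partition coming from the tiling, lies in the lattice $L_K$ generated by the index vectors of copies of $K$. Because $K=K(F)$ is typically unbalanced, not every small leftover set is absorbable — this is precisely the divisibility obstruction — so one must invoke the codegree hypothesis once more to show that after discarding at most $D$ vertices (with $D$ bounded in terms of $L_K$, hence of $F$ alone) the residual leftover has index vector in $L_K$; the absence of a space barrier guaranteed by $\delta_{k-1}(H)\ge\sigma(F)n+\alpha n$ is what bounds the residual imbalance. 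Combining the absorber with the almost-perfect packing from the regularity step produces a $K$-packing, and hence an $F$-packing, missing at most $D$ vertices, as required.
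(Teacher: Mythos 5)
First, a point of order: the paper does not prove Theorem~\ref{thm:My} at all — it is quoted from Mycroft~\cite{Mycroft}, where the proof uses hypergraph regularity together with the Keevash--Mycroft ``geometric theory'' of hypergraph matchings, i.e.\ heavy machinery aimed precisely at the step your plan glosses over. Your overall architecture (reduce to $K(F)$, establish the fractional tiling threshold $\sigma(F)N$ by LP duality, transfer through regularity to an integer packing missing $o(n)$ vertices, then push down to a constant leftover) is in the right spirit, and the first two steps, though only sketched (the duality argument ``pinning the cover weight to roughly $|S|/a$'' still needs to be carried out), look completable.

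The genuine gap is the third step, which is the theorem itself: the assertion that ``after discarding at most $D$ vertices the residual leftover has index vector in $L_K$'', with $D$ depending only on $F$, is exactly the statement $\delta(F,k-1,D)\le\sigma(F)$ that you are supposed to prove, and ``the absence of a space barrier bounds the residual imbalance'' is not an argument. Concretely, three things are missing. (i) The absorbing structure you ``reserve'' requires showing that under $\delta_{k-1}(H)\ge(\sigma(F)+\alpha)n$ the vertex set admits a partition into boundedly many $(F,\beta,t)$-closed classes; in the present paper this is the content of Lemma~\ref{LM4.2} and Lemma~\ref{lem:P} and needs proof, not reservation. (ii) Even with such a partition $\cP$, the $o(n)$ leftover need not split into $m$-sets whose index vectors are realised by copies of $K$ (it could, say, lie entirely in one class of $\cP$ while no copy of $K$ does), so the absorbability of all but constantly many leftover vertices is precisely the point at issue. (iii) ``$D$ bounded in terms of $L_K$, hence of $F$ alone'' presupposes both that the coset group $L_{\max}^{|\cP|}/L_K$ has size bounded independently of $H$ (an analogue of Proposition~\ref{prop:group1}, which again uses the codegree hypothesis and requires proof) and that a bounded lattice discrepancy can always be corrected by deleting boundedly many vertices or tile copies, which does not follow from boundedness of the coset group alone. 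Note also that you cannot shortcut any of this via Theorem~\ref{genthm}, since that theorem takes the almost-perfect-packing constant $\delta(F,\ell,D)$ — i.e.\ Theorem~\ref{thm:My} itself — as an input, so the argument would be circular.
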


The following proposition shows that $Q(\cP, L_{\cP,F}^{\mu}(H))$ has bounded size.

\begin{proposition}\label{prop:group1}
Let $t,r,k,n_0 \in \mathbb N$ so that $k \geq 3$ and $\beta, \mu, \gamma>0$ so that
\[
1/n_0 \ll \beta, \mu \ll \r, 1/m, 1/t,1/r.
\]
Let $F$ be a $k$-partite $k$-graph on $m$ vertices.
Suppose $H$ is a $k$-graph on $n\ge n_0$ vertices such that $\delta_{k-1}(H)\ge (\sigma(F)+\r) n$ with an $(F,\beta, t, 1/m)$-good partition $\cP$ where $|\cP|=r$. 
Then $|Q(\cP, L_{\cP,F}^{\mu}(H))|\le (2 m-1)^r$.
\end{proposition}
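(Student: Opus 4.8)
The plan is to mimic the proof of Proposition~\ref{prop:group} (the graph case), replacing cliques $K_k$ by edges of $H$ throughout. As there, write $L:=L_{\cP,F}^{\mu}(H)$ and recall the partition $\cP=\{V_1,\dots,V_r\}$ given by hypothesis, with $|V_i|\ge n/m$ for each $i$. It suffices to show that every coset of $L$ in $L_{\max}^r$ contains a representative $\bfv'=(v_1',\dots,v_r')$ with $-(m-1)\le v_i'\le m-1$ for all $i\in[r]$, since then the number of cosets is at most $(2m-1)^r$. Fix a $k$-partite realisation of $F$ with vertex-class sizes $a_1\le\cdots\le a_k$, where $a_1=\sigma(F)m$, and set $a:=a_k-a_1<m$. (If $F$ happens to be balanced, i.e.\ $a_1=a_k$, then we also need the fact $m\bfu_i\in L$ for each $i$, obtained as in Case~2 below, and the argument simplifies; so assume first that $a_1<a_k$.) As before, define an auxiliary graph $P$ on vertex set $[r]$ with $(i,j)\in E(P)$ iff $e_H(V_i,V_j):=|\{e\in E(H): |e\cap V_i|,|e\cap V_j|\ge 1\}|\ge \gamma n^k$ — more conveniently, iff there are at least $\gamma n^k$ edges $e$ with one specified vertex in $V_i$ and one in $V_j$.

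First I would show: \emph{if $i$ and $j$ lie in the same component of $P$, then $a(\bfu_i-\bfu_j)\in L$.} Suppose $(i,j)\in E(P)$. Pick a $k$-array $(i_1,\dots,i_k)$ with $i_1=i$, $i_2=j$ (say) and, by averaging over the at least $\gamma n^k$ edges with a vertex in $V_i$ and a vertex in $V_j$, at least $\frac{\gamma}{r^{k-2}}n^k$ of these have their remaining vertices landing in $V_{i_3},\dots,V_{i_k}$. Now apply Proposition~\ref{erdos} twice: once with exponents $(\ell_1,\dots,\ell_k)=(a_1,a_2,\dots,a_k)$ to get $\ge\mu n^m$ copies of $K^{(k)}(a_1,\dots,a_k)$ with $j$th part in $V_{i_j}$, and once with $(a_k,a_2,\dots,a_{k-1},a_1)$ (the roles of $a_1,a_k$ swapped) to get $\ge\mu n^m$ copies of $K^{(k)}(a_k,a_2,\dots,a_{k-1},a_1)$ with $j$th part in $V_{i_j}$. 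By definition of $L$, both index vectors lie in $L$; subtracting them gives $a(\bfu_i-\bfu_j)\in L$. For $i,j$ joined by a path in $P$, concatenate along the path. (Here I am using $K(F)\supseteq F$, so that $\mu n^m$ copies of the complete $k$-partite $k$-graph give in particular that many copies of $F$ with the same index vector; one should take $\mu$ small enough that the $\mu$ in the statement of Proposition~\ref{prop:group1} is below the $\mu$ coming from Proposition~\ref{erdos}, which the hierarchy $\beta,\mu\ll\gamma,1/m,1/t,1/r$ accommodates.)

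Next I would handle the two cases exactly as in Proposition~\ref{prop:group}. For $k\ge 3$ (which is our standing assumption here) I claim $P$ is connected: for any bipartition $A\cup B$ of $[r]$ with $V_A:=\bigcup_{i\in A}V_i$ of size $\le n/2$, the codegree condition $\delta_{k-1}(H)\ge(\sigma(F)+\gamma)n\ge\gamma n$ forces many edges meeting $V_A$, and in fact enough edges with one endpoint in $V_A$ and one in $V_B$ that, by averaging and using $|V_i|\ge n/m$ together with $\gamma\ll 1/m,1/r$, some pair $(i,j)$ with $i\in A$, $j\in B$ has $\ge\gamma n^k$ such edges, so $(i,j)\in E(P)$. (This is the one place the argument genuinely uses $k\ge 3$: when $k=2$ one cannot rule out isolated vertices of $P$, but that case is excluded by hypothesis; still, as a safety net one could run the Case~2 argument of Proposition~\ref{prop:group} verbatim.) Given connectivity, fix $\bfv\in L_{\max}^r$; subtract a suitable integer multiple of a fixed $m$-vector $\bfw\in L$ to reduce to $|\bfv|=0$ without changing the coset; then run the balancing algorithm: while some coordinate exceeds $a$ in absolute value, there is (by $|\bfv|=0$) another coordinate differing from it by more than $a$, and adding or subtracting $a(\bfu_i-\bfu_j)\in L$ decreases the spread; this terminates with a representative all of whose coordinates lie in $[-a,a]\subseteq[-(m-1),m-1]$. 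The coset is unchanged throughout, completing the proof.

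The main obstacle I anticipate is not conceptual but bookkeeping: verifying that the supersaturation step goes through with the right quantifiers — i.e.\ that ``$\ge\gamma n^k$ edges with prescribed parts'' survives the averaging over $k$-arrays (losing only an $r^{k-2}$ factor) and feeds Proposition~\ref{erdos} with a density bounded away from $0$ in terms of $\gamma,1/r,1/m$ only, so that the output $\mu$ is uniform. One must also be a little careful that Proposition~\ref{erdos} is stated for $K^{(k)}(a_1,\dots,a_k)$ with the classes assigned to \emph{ordered} indices $i_1,\dots,i_k$, so that swapping $a_1\leftrightarrow a_k$ really does produce a genuinely different index vector (it does, precisely because $a_1\ne a_k$), and that the resulting copies of the complete $k$-partite $k$-graph each contain a copy of $F$ with the \emph{same} index vector — which holds since any $k$-partite realisation of $F$ embeds into $K^{(k)}(a_1,\dots,a_k)$ respecting the parts. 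Everything else is a transcription of the $k=2$ proof with ``copy of $K_k$'' replaced by ``edge''.
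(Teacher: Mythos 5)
Your proposal hinges on the auxiliary graph $P$ on $[r]$ being connected (your Case~1), and you claim that the codegree condition $\delta_{k-1}(H)\geq(\sigma(F)+\gamma)n$ forces this. That claim is false, and this is where the argument breaks. In Proposition~\ref{prop:group} (the graph case with $k\geq 3$) connectivity followed from $\delta(G)\geq\big(\tfrac12+\tfrac1{2m}\big)n$, which exceeds $n/2$ and therefore forces edges across \emph{any} bipartition; here the corresponding quantity is $\sigma(F)\leq 1/k\leq 1/3$, so the codegree hypothesis can be satisfied by, say, the disjoint union of two complete $k$-graphs on $n/2$ vertices each, in which case $P$ has no edges at all. (Note also that such an $H$ genuinely satisfies the $(F,\beta,t,1/m)$-good hypothesis, so this is not ruled out by the other assumptions.) Consequently the lattice element $a(\bfu_i-\bfu_j)$ need not lie in $L$ for $i\ne j$, and your balancing step has nothing to work with. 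A further, smaller, issue: if you place $i_1=i$ and $i_2=j$ and swap $a_1\leftrightarrow a_k$, the subtraction of the two index vectors yields $a(\bfu_{i_k}-\bfu_{i_1})$, not $a(\bfu_j-\bfu_i)$; you would need $i_k=j$, as in the graph proof.

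The paper's argument avoids $P$ and connectivity entirely, and this is exactly why it works for small $\sigma(F)$. Instead of relating pairs $(i,j)$ that are ``well-connected,'' it uses the codegree condition once per index $i$: counting edges with $k-1$ vertices inside $V_i$ (there are many, since $\delta_{k-1}(H)\geq(\sigma(F)+\gamma)n$ and $|V_i|\geq n/m$) and pigeonholing on the location of the $k$-th vertex to find some $j_i\in[r]$ (possibly $j_i=i$) with at least $\tfrac{1}{m^k k! r}n^k$ such edges, then feeding this into Proposition~\ref{erdos} to obtain $\mu n^m$ copies of $K(F)$ with the smallest class ($a:=\sigma(F)m$ vertices) in $V_{j_i}$ and the remaining $m-a$ vertices in $V_i$, hence $(m-a)\bfu_i+a\bfu_{j_i}\in L$. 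This vector lies in $L$ unconditionally for \emph{every} $i$, and since $k\geq 3$ gives $a\leq m/k<m/2$, the balancing process (subtract or add $(m-a)\bfu_i+a\bfu_{j_i}$ when the largest coordinate exceeds $m-a$ in absolute value) strictly decreases $\sum_i|v_i^*|$ by at least $m-2a>0$ each step and terminates with all coordinates in $[-(m-1),m-1]$. Notice also that the paper's $a$ is the size of the smallest vertex class (so $a\geq 1$ always), not $a_k-a_1$ as in your write-up — your $a$ can be $0$ for balanced $F$, which is why you had to add a special case, whereas the paper's choice handles the balanced case automatically (then $j_i=i$ simply gives $m\bfu_i\in L$).
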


\begin{proof}
Write $L:=L_{\cP,F}^{\mu}(H)$.
It suffices to show that for any element $\bfv\in L_{\max}^{r}$, there exists $\bfv'=(v_1',\dots, v_r') \in L_{\max}^{r}$ such that $-(m-1)\le v_i'\le m-1$ for all $i\in [r]$ and $\bfv - \bfv'\in L$.
In particular, the number of such $\bfv'$ is at most $(2m-1)^r$.

Let $\cP = \{V_1, \dots, V_r\}$ be the partition of $V(H)$ given in the statement of the proposition.
Fix any $i\in [r]$ and consider all edges that contain at least $k-1$ vertices from $V_i$.
Since $\delta_{k-1}(H)\ge (a/m+\r) n$, there are at least $\frac1k\binom{|V_i|}{k-1} (a/m+\r) n$ such edges.
By averaging, there exists $j_i\in [r]$ (it may be that $j_i=i$) such that $H$ contains at least
\[
\frac1r \cdot \frac1k\binom{|V_i|}{k-1} (a/m+\r) n \ge \frac{1}{m^{k} k!r}n^{k}
\]
edges with vertex set $\{ v_1, \dots, v_k \}$ such that $v_1\in V_{j_i}$ and $\{v_2,\dots, v_{k}\}\subseteq V_{i}$. (Here we used $|V_i|\ge n/m$ and $1/n\ll \r$.)
By applying Proposition~\ref{erdos}, since $\mu \ll 1/(m^k k!r)$, we get that there are at least $\mu n^{m}$ copies of $K(F)$ in $H$ whose vertex class of size $a$ is contained in $V_{j_i}$ and other vertex classes are contained in $V_{i}$.
This means that $(m-a)\bfu_i+a\bfu_{j_i}\in L$ for each $i \in [r]$.

Now let $\bfv=(v_1,\dots, v_r)\in L_{\max}^{r}$ and let $l(\bfv):=\sum_{i\in [r]} |v_i|$.
We do the following process iteratively.
For an intermediate step, let $\bfv^*=(v_1^*,\dots, v_r^*)$ be the current vector and take $i\in [r]$ such that $|v_i^*|$ is the maximised over all $i\in [r]$.
We thus subtract $(m-a)\bfu_i+a\bfu_{j_i}$ from $\bfv^*$ if $v_i^*\ge m-a$ or add $(m-a)\bfu_i+a\bfu_{j_i}$ to $\bfv^*$ if $v_i^*\le a-m$.
Note that this process will end because after each step $l(\bfv^*)=\sum_{i\in [r]} |v_i^*|$ decreases by at least $m-2a>0$.
This means that we will reach a vector $\bfv'=(v_1',\dots, v_r')\in L_{\max}^{r}$ such that $-(m-1)\le v_i'\le m-1$ for all $i\in [r]$ and $\bfv - \bfv'\in L$. So we are done.
\end{proof}

\begin{proof}[Proof of Theorem~\ref{thm:Ktil}]
Let $D:=D(F)$ be given by Theorem~\ref{thm:My}.
Given any $\delta \in (\sigma (F),1]$ let $\mu, \alpha, \gamma >0$ so that $0<\mu\ll \alpha\ll \gamma \ll (\delta -\sigma(F)), 1/D, 1/m$.
Apply Lemma \ref{lem:P} with $c:=m$, $\delta':=1/m+\r /2$ to obtain some $\beta>0$. We may assume $\beta \ll \alpha$. Finally choose $n_0 \in \mathbb N$ such that $1/n_0 \ll \beta, \mu$.
Altogether we have
\[
1/n_0\ll \beta , \mu \ll \a \ll \r \ll (\delta-\sigma (F)), 1/D, 1/m.
\]
Let $H$ be an $n$-vertex $k$-graph as in the statement of Theorem~\ref{thm:Ktil}. Note that we may assume that  $n\geq n_0$ and $m$ divides $n$ since else the result is trivial.
We have that $\delta _{k-1} (H) \geq \delta n \geq (\sigma (F)+\gamma)n$.
By Proposition~\ref{prop:deg}, we have $\delta_1(H)\ge \delta {n-1 \choose k-1} \ge (\sigma (F)+\r ) {n-1 \choose k-1}$.


First, for every $v\in V(H)$, we give a lower bound on $|\tilde{N}_{F, \a, 1}(v, H)|$.
Note that for any $(k-1)$-set $S\subseteq V(H)$, we have $|N(S)| \ge (\sigma (F)+\r) n$. Then by Lemma \ref{LM4.2}, for any distinct $u, v \in V(H)$, $u \in \tilde{N}_{F, \a, 1}(v, H)$ if $|N(u) \cap N(v)| \ge \r^2  {n \choose k-1}$. By double counting, we have
\[
{\sum_{S\in N(v)}(|N(S)|-1)} < |\tilde{N}_{F, \a, 1}(v, H)| \cdot |N(v)| + n \cdot \r^2  {n \choose k-1}.
\] 
Note that $|N(v)| \ge \delta_1(H)\ge \delta {n-1 \choose k-1}$. Since $\r \ll \delta,1/k$, we have that
\begin{equation}\label{eq:NK}
|\tilde{N}_{F, \a, 1}(v, H)| > (\sigma (F)+\r) n -1- \frac{\r^2n^k}{|N(v)|} \ge (\sigma (F) +\r/2)n\geq \left (\frac{1}{m}+\frac{\gamma}{2}\right )n.
\end{equation}

Next we claim that every set $A$ of $m+1$ vertices  in $V(H)$ contains two vertices that are $(F, \a, 1)$-reachable in $H$.
Indeed, since $\delta_{1}(H)\ge \delta {n-1 \choose k-1}$, the degree sum of any $m+1$ vertices is at least $(m+1)\delta{n-1 \choose k-1}$. Since $\r \ll 1/m$, we have 
\[
(m+1)\delta {n-1 \choose k-1} > \left(1+{m+1 \choose 2}\r\right){n \choose k-1}.
\] 
Thus, there exist distinct $u, v \in A$ such that $|N(u) \cap N(v)| \ge \r  {n \choose k-1}$, and so they are $(F,\a,1)$-reachable by Lemma~\ref{LM4.2}.

By \eqref{eq:NK} and the above claim, we can apply Lemma \ref{lem:P} to $H$ with the constants chosen at the beginning of the proof.
We get a partition $\cP = \{V_1, \dots, V_r\}$ of $V(H)$ such that $r\le m$ and for any $i\in [r]$, $|V_i|\ge (\sigma (F)+\r/2-\a)n \ge n/m$ and $V_i$ is $(F, \beta, 2^{m-1})$-closed in $H$. 
Thus, $\cP$ is a $(F, \beta, 2^{m-1}, 1/m)$-good partition of $V(H)$.

Note that Theorem~\ref{thm:My} shows that $\delta(F, k-1, D)\le \sigma(F)$ and thus $\delta_{k-1}(H)\ge (\sigma(F)+\r) n\ge (\delta(F, k-1, D)+\r)n$.
Moreover, Proposition~\ref{prop:group1} shows that $|Q(\cP, L_{\cP,F}^{\mu}(H))|\le (2 m-1)^r$.
So by Theorem~\ref{genthm}, with $q:=(2m-1)^r$, we conclude that $H$ contains a perfect $F$-packing if and only if $(\mathcal P, L^{\mu}_{\mathcal P,F}(H))$ is $(2m-1)^r$-soluble.

{\bf The algorithm.}
Now we state the algorithm and estimate the running time. 
We run the algorithm with running time $O(n^{2^{m-1}m+1})$ provided by Lemma~\ref{lem:P} and obtain a partition $\cP$ of $V(H)$.
By Theorem~\ref{genthm}, it remains to test if $(\mathcal P, L^{\mu}_{\mathcal P,F}(H))$ is $(2m-1)^r$-soluble.
This can be done by testing whether any $F$-packing $M$ in $H$ of size at most $(2m-1)^r$ is a $q$-solution of $(\mathcal P, L^{\mu}_{\mathcal P,F}(H))$, in time $O(n^{m(2m-1)^r})=O(n^{m(2m-1)^m})$.
If there is a $q$-solution $M$ for $(\mathcal P, L^{\mu}_{\mathcal P,F}(H))$, output YES; otherwise output NO.
Since $m\ge 3$ and thus $2^{m-1}m+1<m(2m-1)^m$,
the overall running time is $O(n^{m(2m-1)^m})$.
\end{proof}

\section{Concluding remarks}
In this paper we introduced a general structural theorem (Theorem~\ref{genthm}) 
which can be used to 
determine classes 
of (hyper)graphs for which the decision problem for perfect $F$-packings is polynomial time solvable. We then gave three applications of this result.
It would be interesting to find other applications of Theorem~\ref{genthm}.

In light of Conjecture~\ref{conj1} it is likely that one can replace the condition that $\delta ^* =\max \{1/3, c^*_{k,\ell} \}$ in Theorem~\ref{mainthm} with $\delta ^* = c^*_{k,\ell}$.
Theorem~\ref{genthm} is likely to be useful for this. However, note that
in the proof of Theorem~\ref{mainthm}, the condition $\delta ^*\geq 1/3$ ensured that the partition $\mathcal P$ of $V(H)$ consisted of at most $2$ vertex classes.
We then showed that our hypergraph $H$ contained a perfect matching or 
that the coset group $Q$ had bounded size. In particular, since $|\mathcal P|\leq 2$ it was relatively straightforward to show that $|Q|$ was bounded. However, if we no longer have that $\delta ^*\geq 1/3$ we may have that 
$\mathcal P$ consists of many classes. Thus, determining that $Q$ has bounded size is likely to be substantially harder in this case.
 
In Theorems~\ref{mainthm},~\ref{thm:Ftil} and~\ref{thm:Ktil}
we provided algorithms for \emph{determining} whether a hypergraph contains a perfect matching or packing. It would be  interesting to  obtain analogous results which \emph{produce} a perfect matching or packing if such a structure exists.

\section*{Acknowledgment}
The authors are grateful to Richard Mycroft for helpful discussions, particularly concerning~\cite{KKM13}.

\bibliographystyle{plain}

\begin{thebibliography}{10}

\bibitem{AFHRRS}
N.~Alon, P.~Frankl, H.~Huang, V.~R{\"o}dl, A.~Ruci{\'n}ski and B.~Sudakov.
\newblock Large matchings in uniform hypergraphs and the conjecture of {E}rd{\H
  o}s and {S}amuels.
\newblock {\em J. Combin. Theory Ser. A}, 119(6):1200--1215, 2012.

\bibitem{AY96}
N.~Alon and R.~Yuster.
\newblock {$H$}-factors in dense graphs.
\newblock {\em J. Combin. Theory Ser. B}, 66(2):269--282, 1996.

\bibitem{CKO}
O.~Cooley, D.~K{\"u}hn and D.~Osthus.
\newblock Perfect packings with complete graphs minus an edge.
\newblock {\em European J. Combin.}, 28(8):2143--2155, 2007.

\bibitem{CzKa}
A.~Czygrinow and V.~Kamat.
\newblock Tight co-degree condition for perfect matchings in 4-graphs.
\newblock {\em Electron. J. Combin.}, 19(2):Paper 20, 16, 2012.

\bibitem{Edmonds}
J.~Edmonds.
\newblock Paths, trees, and flowers.
\newblock {\em Canad. J. Math.}, 17:449--467, 1965.

\bibitem{erdos}
P.~Erd\H{o}s.
\newblock On extremal problems of graphs and generalized graphs.
\newblock {\em Israel J. Math.}, 2(3):183--190, 1964.

\bibitem{garey}
M.R. Garey and D.S. Johnson.
\newblock {\em Computers and intractability, Freeman}, 1979.

\bibitem{hs}
A. Hajnal and E.~Szemer\'{e}di, Proof of a conjecture of Erd\H{o}s,
\emph{Combinatorial Theory and its Applications vol. II}~{\bf 4}, 601--623, 1970.

\bibitem{HPS}
H.~H\`an, Y.~Person and M.~Schacht.
\newblock On perfect matchings in uniform hypergraphs with large minimum vertex
  degree.
\newblock {\em SIAM J. Discrete Math}, 23:732--748, 2009.



\bibitem{Han14_poly}
J.~Han.
\newblock Decision problem for perfect matchings in dense uniform hypergraphs.
\newblock {\em Trans. Amer. Math. Soc., accepted}.



\bibitem{JieNote}
J.~Han.
\newblock Perfect matchings in hypergraphs and the Erd\H{o}s matching
  conjecture.
\newblock {\em SIAM J. Discrete Math.}, 30:1351--1357, 2016.

\bibitem{Han15_mat}
J.~Han.
\newblock {Near Perfect Matchings in $k$-uniform Hypergraphs II}.
\newblock {\em SIAM J. Discrete Math.}, 30:1453--1469, 2016. 

\bibitem{jieconf}
J.~Han.
\newblock The complexity of perfect packings in dense graphs,
\newblock submitted.


\bibitem{HeKi}
P.~Hell and D.~G. Kirkpatrick.
\newblock On the complexity of general graph factor problems.
\newblock {\em SIAM J. Comput.}, 12(3):601--609, 1983.

\bibitem{HuSc}
C.A.J. Hurkens and A.~Schrijver.
\newblock On the size of systems of sets every {$t$} of which have an {SDR},
  with an application to the worst-case ratio of heuristics for packing
  problems.
\newblock {\em SIAM J. Discrete Math.}, 2(1):68--72, 1989.

\bibitem{Kann}
V.~Kann.
\newblock Maximum bounded {$H$}-matching is {MAX} {SNP}-complete.
\newblock {\em Inform. Process. Lett.}, 49(6):309--318, 1994.

\bibitem{karp}
R.~M. Karp.
\newblock Reducibility among combinatorial problems.
\newblock In {\em Complexity of computer computations ({P}roc. {S}ympos., {IBM}
  {T}homas {J}. {W}atson {R}es. {C}enter, {Y}orktown {H}eights, {N}.{Y}.,
  1972)}, pages 85--103. Plenum, New York, 1972.

\bibitem{KRS10}
M.~Karpi{\'n}ski, A.~Ruci{\'n}ski and E.~Szyma{\'n}ska.
\newblock Computational complexity of the perfect matching problem in
  hypergraphs with subcritical density.
\newblock {\em Internat. J. Found. Comput. Sci.}, 21(6):905--924, 2010.

\bibitem{Kawa}
K.~Kawarabayashi.
\newblock {$K^-_4$}-factor in a graph.
\newblock {\em J. Graph Theory}, 39(2):111--128, 2002.

\bibitem{KKM13}
P.~Keevash, F.~Knox and R.~Mycroft.
\newblock Polynomial-time perfect matchings in dense hypergraphs.
\newblock {\em Adv. in Math.}, 269:265--334, 2015.

\bibitem{Khan1}
I.~Khan.
\newblock Perfect matchings in 3-uniform hypergraphs with large vertex degree.
\newblock {\em SIAM J. Discrete Math.}, 27(2):1021--1039, 2013.

\bibitem{Khan2}
I.~Khan.
\newblock Perfect matchings in 4-uniform hypergraphs.
\newblock {\em J. Combin. Theory Ser. B}, 116:333--366, 2016.


\bibitem{KKMS} 
H.A. Kierstead, A.V. Kostochla, M. Mydlarz and E. Szemer\'edi,
\newblock A fast algorithm for equitable coloring.
\newblock {\em Combinatorica}, 30:217--224, 2010.








\bibitem{Komlos}
J.~Koml{\'o}s.
\newblock Tiling {T}ur\'an theorems.
\newblock {\em Combinatorica}, 20(2):203--218, 2000.

\bibitem{KSS-AY}
J.~Koml{\'o}s, G.~S{\'a}rk{\"o}zy, and E.~Szemer{\'e}di.
\newblock Proof of the {A}lon-{Y}uster conjecture.
\newblock {\em Discrete Math.}, 235(1-3):255--269, 2001.
\newblock Combinatorics (Prague, 1998).

\bibitem{KuOs06soda}
D.~K{\"u}hn and D.~Osthus.
\newblock Critical chromatic number and the complexity of perfect packings in
  graphs.
\newblock In {\em Proceedings of the {S}eventeenth {A}nnual {ACM}-{SIAM}
  {S}ymposium on {D}iscrete {A}lgorithms}, pages 851--859. ACM, New York, 2006.

\bibitem{KO06mat}
D.~K{\"u}hn and D.~Osthus.
\newblock Matchings in hypergraphs of large minimum degree.
\newblock {\em J. Graph Theory}, 51(4):269--280, 2006.

\bibitem{KuOs-survey}
D.~K{\"u}hn and D.~Osthus.
\newblock Embedding large subgraphs into dense graphs.
\newblock In {\em Surveys in combinatorics 2009}, volume 365 of {\em London
  Math. Soc. Lecture Note Ser.}, pages 137--167. Cambridge Univ. Press,
  Cambridge, 2009.

\bibitem{KuOs09}
D.~K{\"u}hn and D.~Osthus.
\newblock The minimum degree threshold for perfect graph packings.
\newblock {\em Combinatorica}, 29(1):65--107, 2009.

\bibitem{KOTo}
D.~K\"uhn, D.~Osthus and T.~Townsend.
\newblock Fractional and integer matchings in uniform hypergraphs.
\newblock {\em European J. Combin.}, 38:83--96, 2014.

\bibitem{KOT}
D.~K{\"u}hn, D.~Osthus and A.~Treglown.
\newblock Matchings in 3-uniform hypergraphs.
\newblock {\em J. Combin. Theory Ser. B}, 103(2):291--305, 2013.

\bibitem{LeGall14}
F.~Le~Gall.
\newblock Powers of tensors and fast matrix multiplication.
\newblock In {\em Proceedings of the 39th International Symposium on Symbolic
  and Algebraic Computation}, ISSAC '14, pages 296--303, New York, NY, USA,
  2014. ACM.

\bibitem{LM1}
A.~Lo and K.~Markstr\"om.
\newblock F-factors in hypergraphs via absorption.
\newblock {\em Graphs  Combin.}, 31(3):679--712, 2015.

\bibitem{MaRu}
K.~Markstr\"{o}m and A.~Ruci\'{n}ski.
\newblock Perfect {M}atchings (and {H}amilton {C}ycles) in {H}ypergraphs with
  {L}arge {D}egrees.
\newblock {\em European J. Comb.}, 32(5):677--687, July 2011.


\bibitem{Mycroft} R. Mycroft,  Packing $k$-partite $k$-uniform hypergraphs, \emph{J. Combin. Theory Ser. A}, 138:60--132, 2016. 


\bibitem{Pik}
O.~Pikhurko.
\newblock Perfect matchings and {$K^3_4$}-tilings in hypergraphs of large
  codegree.
\newblock {\em Graphs Combin.}, 24(4):391--404, 2008.

\bibitem{RR}
V.~R\"odl and A.~Ruci\'nski.
\newblock Dirac-type questions for hypergraphs — a survey (or more problems
  for endre to solve).
\newblock {\em An Irregular Mind}, Bolyai Soc. Math. Studies 21:561--590, 2010.

\bibitem{RRS06}
V.~R\"odl, A.~Ruci\'nski and E.~Szemer\'edi.
\newblock A {D}irac-type theorem for 3-uniform hypergraphs.
\newblock {\em Combinatorics, Probability and Computing}, 15(1-2):229--251,
  2006.

\bibitem{RRS06mat}
V.~R{\"o}dl, A.~Ruci{\'n}ski and E.~Szemer{\'e}di.
\newblock Perfect matchings in uniform hypergraphs with large minimum degree.
\newblock {\em European J. Combin.}, 27(8):1333--1349, 2006.

\bibitem{RRS09}
V.~R{\"o}dl, A.~Ruci{\'n}ski and E.~Szemer{\'e}di.
\newblock Perfect matchings in large uniform hypergraphs with large minimum
  collective degree.
\newblock {\em J. Combin. Theory Ser. A}, 116(3):613--636, 2009.

\bibitem{ShZh}
A.~Shokoufandeh and Y.~Zhao.
\newblock Proof of a tiling conjecture of {K}oml\'os.
\newblock {\em Random Structures Algorithms}, 23(2):180--205, 2003.

\bibitem{Szy13}
E.~Szyma{\'n}ska.
\newblock The complexity of almost perfect matchings and other packing problems
  in uniform hypergraphs with high codegree.
\newblock {\em European J. Combin.}, 34(3):632--646, 2013.

\bibitem{TrZh12}
A.~Treglown and Y.~Zhao.
\newblock Exact minimum degree thresholds for perfect matchings in uniform
  hypergraphs.
\newblock {\em J. Combin. Theory Ser. A}, 119(7):1500--1522, 2012.

\bibitem{TrZh13}
A.~Treglown and Y.~Zhao.
\newblock Exact minimum degree thresholds for perfect matchings in uniform
  hypergraphs {II}.
\newblock {\em J. Combin. Theory Ser. A}, 120(7):1463--1482, 2013.

\bibitem{TrZh15}
A.~Treglown and Y.~Zhao.
\newblock {A note on perfect matchings in uniform hypergraphs}.
\newblock {\em Electron. J. Combin.}, 23:P1.16, 2016.

\bibitem{Tu47}
W.T. Tutte.
\newblock The factorization of linear graphs.
\newblock {\em J. London Math. Soc.}, 22:107--111, 1947.

\bibitem{Yuster07}
R.~Yuster.
\newblock Combinatorial and computational aspects of graph packing and graph
  decomposition.
\newblock {\em Computer Science Review}, 1(1):12 -- 26, 2007.

\bibitem{zsurvey} Y. Zhao. Recent advances on Dirac-type problems for hypergraphs,  \emph{Recent Trends in Combinatorics}, the IMA Volumes in Mathematics and its Applications 159. Springer, New York, 2016. Vii 706.
\end{thebibliography}

\end{document}